\newcommand{\defeq}{\vcentcolon=}
\crefname{hypothesis}{Hypothesis}{Hypotheses}
\title{Uncertainty Quantification for Gradient Methods on Strongly Convex Functions\thanks{Submitted to the editors DATE.
}}
\author{Conor McMeel\thanks{Department of Computing, Imperial College London 
  (\email{c.mcmeel18@imperial.ac.uk}).}
\and Panos Parpas\thanks{Department of Computing, Imperial College London 
  (\email{panos.parpas@imperial.ac.uk}).}}
\begin{document}

\maketitle

\begin{abstract}
  We consider the problem of minimizing a strongly convex function that depends on an uncertain parameter $\theta$. The uncertainty in the objective function means that the optimum, $x^*(\theta)$, is also a function of $\theta$. We propose an efficient method to compute $x^*(\theta)$ and its statistics. We use a chaos expansion of $x^*(\theta)$ along a truncated basis and study first-order methods that compute the optimal coefficients. 
  We establish the convergence rate of the method as the number of basis functions, and hence the dimensionality of the optimization problem is increased.
  We give the first non-asymptotic rates for the gradient descent and the accelerated gradient descent methods. Our analysis exploits convexity and does not rely on a diminishing step-size strategy. As a result, it is much faster than the state-of-the-art both in theory and in our preliminary numerical experiments. A surprising side-effect of our analysis is that the proposed method also acts as a variance reduction technique to the problem of estimating $x^*(\theta)$.
\end{abstract}

\begin{keywords}
  uncertainty quantification, gradient descent, strongly convex, chaos expansions
\end{keywords}

\begin{AMS}
  65K10, 41A45
\end{AMS}

\section{Introduction}
Consider the following optimization problem,
\begin{equation}\label{eq:intro prob}
x^*(\theta) = \arg \min_{x \in L^2_\pi}f(x(\theta))=\int_{V} F(x(\theta), \theta,v) \nu(\theta, dv),
\end{equation}
where $\theta \in \Theta\subset\mathbb{R}^d$ is an uncertain parameter that is distributed according to a probability distribution $\pi$ and where $v$ represents an uncertain variable distributed according to $\nu$. 
The probability measure $\nu$ may also depend on $\theta$.  
The optimizer $x^*$ depends on $\theta$ and we seek to find a function $x^*(\theta)$ that belongs to $L^2_\pi$ (the space of all square integrable functions with respect to $\pi$), and minimizes the expected value of $F$ with respect to $v$. 
We denote this expectation by $f(x(\theta))$. Therefore we are looking for a function $x^*(\theta)$ such that for $\pi$-almost all $\theta$ we have that $f(x^*(\theta))$ is a minimum of $f(x(\theta))$ . For concrete instances of the problem, we direct the reader to \cite{crepey2020uncertainty} and \cite{sullivan2015introduction}. Our main assumptions are that the function $f$ is strongly convex, we have noisy gradient information and that the model is to be optimized with a first order method (i.e. second order information is either unavailable or too expensive to compute). 
Our precise assumptions and model description appear in Section \ref{section::AlgoIntro}. 

In order to develop a numerical scheme to solve the problem above we choose a suitable basis of $L^2_\pi$, $B_i(\theta)$ and write a basis decomposition for $x^*(\theta)$,
\begin{equation}
x^*(\theta) = \sum_i u_i B_i(\theta).
\end{equation}
We propose two first order methods, a standard gradient descent and its accelerated variant, that aim to efficiently compute the coefficients $u_i$. 
In order to understand the benefits of the proposed methodology it is instructive to consider a naive method to solve \eqref{eq:intro prob}. A naive method for solving this problem is to repeatedly sample $\theta \sim \pi$, and for each sample $\theta_i$ solve the following (finite-dimensional) optimization problem,
\[
x^*(\theta^i)=\arg \min_{x^i}f(x(\theta^i))=\int_{V} F(x(\theta^i), \theta^i,v) \nu(\theta^i, dv).
\]
We can compute an approximation of $x^*(\theta^i)$ using a gradient descent method. We will need to repeat this procedure $N$ times to obtain $\{x^*(\theta^i)\}_{i=1}^N$, and use these $N$ solutions to infer $x^*(\theta)$ or its statistics.
While this method might work, it requires the solution of an optimization problem $N$ times.
Since the evaluation of the objective function involves (a potentially high dimensional) integration the process to obtain a single $x^*(\theta^i)$ is expensive. However, the original problem only requires us to solve a single optimization problem. Is there a way to design an algorithm that can exploit the structure of the original problem? We attempt to provide some answers to this question. Below we describe our contributions and the relation of our work with existing work.

\subsection{Our Contributions}
We propose and analyze an uncertainty quantification algorithm for both gradient descent and accelerated gradient descent. 
While such algorithms have been considered previously in a Stochastic Approximation setting (for example, \cite{crepey2020uncertainty}), they only show convergence asymptotically. In order to obtain a non-asymptotic rate, we decompose the error at iteration $k$ into two terms,
\begin{equation*}
E_k = E_{k,\mathrm{basis}} + E_{k,\mathrm{opt}},
\end{equation*}
where $E_{k,\mathrm{basis}}$ is a basis-dependent error representing the distance between the point our algorithm will converge to and the true optimum. 
The second error term, $E_{k,\mathrm{opt}}$, is the error due to the fact that the optimisation algorithm has not converged yet. 
We bound $E_{k,\mathrm{basis}}$ (Lemma \ref{lemma::LinConvRemainder}), and $E_{k,\mathrm{opt}}$ (Theorem \ref{theorem::GDFixedUQ}), and combine the two bounds to show that when $E_{k,\mathrm{basis}}$ becomes small we obtain linear convergence of $E_k$ (Theorem \ref{theorem::gradConvLinear}). 
We discuss how many iterations are needed for $E_{k,\mathrm{basis}}$ to become small, and in doing so introduce the concept of the \textit{condition number of the solution}. 
We also make the following contributions.
\begin{itemize}
	\item As well as gradient descent, we also give an accelerated method that attains the optimal convergence rate (Theorem \ref{theorem::AccGradOverall}) for first order methods.
	\item We show that our methods act as variance reduction versus more naive approaches (Theorem \ref{theorem::IntegralError} and Example \ref{exmp::VarianceExplain}).
	\item  We explicitly compute error rates for a simple method to numerically estimate gradient information (Theorem \ref{theorem::IntegralError}), though our framework allows a very flexible model of noise.
\end{itemize}
We conclude by performing some experiments that demonstrate our method, the properties we have stated above, as well as its speed up compared to other methods in the literature, and other naive methods for computing the vector $u^*$. 

\subsection{Previous Work}
Much of the previous work surrounding chaos expansion for uncertainty quantification with gradient methods have focused on Robbins-Monro \cite{robbins1951stochastic} stochastic approximation algorithms. 
This is the first work to look at the classical gradient and accelerated gradient descent, which means we are able to demonstrate convergence using a new range of step sizes. Turning our attention to the previous body of uncertainty quantification for SA algorithms, in \cite{kulkarni2009finite} a similar spectral approach is taken, but truncated to a finite dimension at all iterations.
They truncate by letting $x(\theta) = u_iB_i$, for some finite family of functions $B_i$, and then perform a standard Stochastic Approximation procedure to calculate the coefficients $u_i$. After this, error analysis from the finite-dimensional approximation is performed. 
In terms of infinite-dimensional methods, \cite{yin1990h} give a SA algorithm in a Hilbert space. However, the algorithm is defined in infinite-dimensional space, so is not practically implementable. In our work we work with a finite number of basis functions and we increase the number of basis functions over time.
This is a standard trick in numerical analysis and it is known as a sieve method. 
The idea of the sieve method has been explored before, for example \cite{nixdorf1984invariance} show asymptotic normality and \cite{yin1992h} show almost-sure convergence for modified SA procedures. A number of asymptotic properties are also shown in \cite{chen2002asymptotic}, including convergence, normality, and almost-sure loglog rate of convergence. In \cite{crepey2020uncertainty}, they again study a SA sieve algorithm. 
A number of conditions are weakened from previous work, and they show asymptotic convergence assuming only standard SA local separation and a local strong convex type assumption. However, there are also conditions on the step size which relate to the number of basis functions and the decay of the truncation error, and its our hope that in our setting we will not need conditions like this.

We note that the above reference all use decaying step sizes that are typically used in Stochastic Approximation. In our work, we will see that we encompass the Stochastic Approximation framework of the above references, but derive results instead for constant step sizes, and get non-asymptotic rates.


\section{Preliminaries} \label{section::AlgoIntro}
In this section, we give some useful mathematical preliminaries necessary for introducing the algorithms. We now give a formal statement of our problem:
\begin{definition}[UQ Problem]
	Let $f$ be a function defined on $L^2_\pi \times \mathbb{R}^d \rightarrow \mathbb{R}$, that takes in a $d$-dimensional real parameter $\theta$, and a function of the $L^2_\pi$ space $x$ evaluated at $\theta$. Suppose we cannot access the function $f$, but instead a function $F$ such that $f(x(\theta),\theta) = \mathbb{E}_v\left(F(x(\theta),\theta,v)\right)$. Let the map $x \rightarrow f(x(\cdot), \cdot)$ be continuous, and further let the function be strongly convex with parameter $\mu$ and differentiable, and Lipschitz continuous gradient. Let the variable $\theta$ come from some distribution $\pi$. 
	
	Then the Uncertainty Quantification problem asks us to do the following two things:
	\begin{enumerate}
		\item Find a function $x^*(\theta)$ such that $\int_{V} F(x(\theta), \theta,v) \nu(\theta, dv)$ is minimised.
		\item To learn the distribution of $x^*(\theta)$ when $\theta \sim \pi$, or at least compute its statistics.
	\end{enumerate}
\end{definition}
As stated previously, we use the assumption that the function $x^*(\theta)$ is in the space $L^2_{\pi}$. We will also assume that for all $x^*(\theta) \in L^2_{\pi}$, we have the following bound on the second moment of $\nabla F(x(\theta),\theta,v)$:
\begin{equation*}
\mathbb{E}_V \left( \nabla F(x(\theta),\theta,v)^2\right) \leq V + V_G \left(\nabla f(x(\theta),\theta)\right)^2,
\end{equation*} 
for all $\theta, x(\cdot)$ with $V \geq 0, V_G \geq 1$, and equality in the deterministic case. We will later see that this assumption is necessary to bound the error on our gradient estimation.

We choose a fixed set of orthonormal polynomials with respect to $\pi$ that are a basis of $L^2_{\pi}$, $B_i(\theta)$. Apart from being orthonormal, we only impose that they are real valued and square integrable with respect to $\pi$. Having done this, we now have the following expression for some set of coefficients $u_i$:
\begin{equation}
x^*(\theta) = \sum_{i \geq 0} u_iB_i(\theta),
\end{equation}
and our problem is reduced to finding the $u_i$, for which we will be using gradient descent to solve. In the remainder of this paper, we will assume a fixed basis has been chosen, though various quantities will of course depend on the choice of basis.

This expression means we have the usual canonical isomorphism $\mathcal{I}$ from $L^2_{\pi}$ to $l^2$, the space of normed sequences. We will use that isomorphism in our algorithm in a similar way. In particular, we have a function $\mathcal{I}$ such that:
\begin{equation*}
u = \mathcal{I}(x),
\end{equation*}
where $u, x$ are related by $\sum_i u_iB_i(\theta) = x(\theta)$. Note we now have the following relations between the $L^2_\pi$ norm of $x(\theta)$ and the $l_2$ norm of the sequence $u = I(x(\theta))$:
\begin{equation}
||x(\theta)||_{\pi} = ||u||_2
\end{equation}
where the $\pi$ and $l_2$ norms are induced by the following inner products:
\begin{align*}
\langle x, y \rangle_\pi &= \int_{\Theta} x(\theta)y(\theta) \pi(d\theta), \\
\langle u, v \rangle_2 &= \sum_{i=0}^\infty u_iv_i,
\end{align*}
respectively. The relation between the two norms can be seen by squaring the orthonormal expansion of $x(\theta)$ and using the orthonormal property when integrating. 

Additionally, we keep the same symbols for inner products and norms for functions with multidimensional output $g,h: L^2_\pi \times \mathbb{R}^d \rightarrow \mathbb{R}^q$, but now we have that $\langle g, h \rangle_\pi$ is instead:
\begin{equation*}
\langle g,h \rangle_\pi = \sum_{i=1}^q \langle g_i,h_i \rangle_\pi,
\end{equation*}
where $g_i, h_i$ are the components of $g,h$. We similarly define the isomorphism:
\begin{equation*}
I(g) = \left(I(g_1),I(g_2),\ldots,I(g_q)\right) = \left(u_1, u_2, \ldots, u_q\right),
\end{equation*}
where each sequence $u_i \in l_2$. We can similarly extend our definition of the $l_2$ inner product:
\begin{equation}
\langle \textbf{u},\textbf{v} \rangle_2 = \sum_{i=1}^q \langle u_i,v_i \rangle_2,
\end{equation}
where $u_i,v_i$ are the components of $\textbf{u},\textbf{v}$. It is easily verified that both of these inner products are in fact inner products. We similarly extend the norms to be induced by these inner products. Note that it still holds that:
\begin{equation*}
||\textbf{x}(\theta)||_\pi = ||\textbf{u}||_2.
\end{equation*}
In some cases we will find it useful to only consider $x(\theta)$ such that $\mathcal{I}(x)$ has only finitely many non-zero values. We define this space now.
\begin{definition}
	Consider an element $x(\theta) \in L^2_\pi$, and consider its vector of basis coefficients $u = (u_0,u_1,\ldots) = \mathcal{I}(x)$. We say $x(\theta) \in L^2_{\pi,m}$, or the level $m$ subspace of $L^2_\pi$ if for all $m' > m$ we have $u_{m'} = 0$. We will similarly say that the basis vector $u \in l^2_m$.
\end{definition}
As stated before, we are imposing strong convexity, and in our case that means we impose strong convexity with respect to the $\pi$ norm for each $\theta$. The specific definition of the strong convexity constant $\mu$ in this case is given below:
\begin{definition} \label{defn::strongConvex}
	Consider a function $f: L^2_\pi \times \mathbb{R}^d \rightarrow \mathbb{R}$. We say that $f$ is strongly convex with parameter $\mu$ if:
	\begin{equation}
	\langle \nabla f(x_1(\theta),\theta)- \nabla f(x_2(\theta),\theta), x_1(\theta) - x_2(\theta) \rangle_\pi \geq \mu||x_1(\theta) - x_2(\theta)||_\pi^2 
	\end{equation}
	for all $x_1(\cdot), x_2(\cdot) \in L^2_\pi$.
\end{definition}
We note firstly that we are using the $\pi$ norm in this definition, and also that if $f$ is convex when we consider the domain to be all of $L^2_\pi$, then it is also convex when the domain is $L^2_{\pi,m}$ for some $m$, though in general we may have the parameter $\mu_m > \mu$.

Additionally, as we are performing convex optimisation, we will also be assuming the gradient is Lipschitz continuous, but here we want to specify that we are using the $\pi$ norm also, and only considering the first argument. Formally what we mean is: 
\begin{definition}
	Let $g: L^2_\pi \times \mathbb{R}^d \rightarrow \mathbb{R}^q$. We say that $g$ is Lipschitz Continuous with parameter $L$ if:
	\begin{equation*}
	||g(x_1(\theta), \theta) - g(x_2(\theta), \theta)||_\pi \leq L||x_1(\theta) - x_2(\theta)||_\pi
	\end{equation*}
	for all $x_1(\cdot), x_2(\cdot) \in L^2_\pi$.
\end{definition}
For our purposes, we assume the above is true for $g = \nabla f$.

Finally, as we have previously stated, we will be solving for the vector of basis coefficients $u = \mathcal{I}(x^*(\theta)) \in l^2$ as a proxy for finding $x^*(\theta)$ itself. Therefore our iterates will be elements of $l^2$, and we must define a new operator that will be used as the gradient as follows:

\begin{definition} \label{definition::basisGrad}
	Let $f(x(\theta),\theta)$ be the function that we wish to optimise. Define the descent operator $D_{m}(f(x(\theta),\theta))$ to be the direction of steepest descent at the input $x(\cdot)$, where the norm used to compare directions is the $\pi$-norm, and we constrain the direction to be an element of $l^2_m$. Specifically, we have:
	\begin{equation}
	D_{m}(f(x(\theta),\theta)) = \arg\min_{u, u \in l^2_{m}} -\langle \nabla f(x(\theta),\theta), u \rangle_2 + \frac{||u||^2_2}{2}
	\end{equation}
\end{definition}
It can be seen from this definition that the $i$th entry of this vector is equal to $\int_{\theta} \nabla f(x(\theta),\theta) B_i(\theta) \pi(d(\theta))$, and we are simply computing the first $m$ basis coefficients of the gradient vector.

For analysis purposes, we will also often work with the infinite dimensional vector $D(f(x(\theta),\theta)$ which is defined similarly to above:
\begin{equation}
D(f(x(\theta),\theta)) = \arg\min_{u, u \in l^2} -\langle \nabla f(x(\theta),\theta), u \rangle_2 + \frac{||u||^2_2}{2}.
\end{equation}
Therefore we can think of $D_{m}$ as a truncated gradient, whereas $D$ is the full gradient.
\subsection{Condition numbers} \label{sec::ConditionNumbers}
In gradient descent methods we are typically concerned with the \textit{condition number} of the problem, which gives an idea of how difficult the function is to optimise. In our case, it can be defined as follows.
\begin{definition}
	Let $f$ be strongly convex and have Lipschitz gradient with parameters $\mu, L$. Then the condition number of the problem, $\kappa$, is equal to $L/\mu$.
\end{definition}
For a standard gradient descent method applied to a strong convex function, we know that the rate of convergence is linear and with a rate of $1 - 1/\kappa$. 
However, we note that from the earlier definition of the Lipschitz constant, the condition number could theoretically change at different stages of the algorithm. 
We will see later that we are concerned with subclasses of $L^2_\pi$ such that $\mathcal{I}(x)$ has only zeroes above a certain level $m$.

We see that we could have a smaller Lipschitz constant, and a larger $\mu$ when considering only these subclasses. With that in mind, we can define the condition number for a truncation level $m$.
\begin{definition}
	Let $f$ be strongly convex, and its gradient $g$ be Lipschitz continuous. Suppose we set the domain of $g$ to be $L^2_{\pi,m}$. We can then define a level $m$ Lipschitz constant as follows,
	\begin{align}
	||g(x_1(\theta), \theta) - g(x_2(\theta), \theta)||_\pi &\leq L_m||x_1(\theta) - x_2(\theta)||_\pi,
	\end{align}
	for all $x_1(\cdot), x_2(\cdot) \in L^2_{\pi,m}$. Similarly, we can define the level $m$ strong convexity constant $\mu_m$ as the largest constant such that:
	\begin{equation}
	\langle \nabla f(x_1(\theta),\theta)- \nabla f(x_2(\theta),\theta), x_1(\theta) - x_2(\theta) \rangle_\pi \geq \mu_m||x_1(\theta) - x_2(\theta)||_\pi^2 
	\end{equation}
	with $x_1(\cdot), x_2(\cdot) \in L^2_{\pi,m}$.
	Knowing these, we can define the level $m$ condition number as:
	\begin{equation}
	\kappa_m = \frac{L_m}{\mu_m}
	\end{equation}
	note that $L_m$ is an increasing sequence converging to $L$ as $m \rightarrow \infty$, and $\mu_m$ is a decreasing sequence converging to $\mu$ as $m \rightarrow \infty$. Therefore $\kappa_m$ is also an increasing sequence converging to $\kappa$ as $m \rightarrow \infty$.
\end{definition}

In our method, we will also consider a second quantity which can be thought of as a condition number of the solution. To do that, we first need to define a projection operator.
\begin{definition}
	Let $x(\theta) \in L^2_\pi$. We define $P_m$ to be the projection operator from $L^2_\pi$ to $L^2_{\pi,m}$, which we will call the level $m$ projection. We will also sometimes use the same notation for the projection that sends elements of $l^2$ to $l^2_m$.
\end{definition}
We note that the latter interpretation of the projection shows it can be computed explicitly: it acts on $\mathcal{I}(x) = u \in l^2$ by setting $u_i = 0$ for $i > m$. A natural follow-up definition to this is the \textit{level m remainder}.
\begin{definition}
	Let $x(\theta) \in L^2_\pi$, and let $P_m$ be the level $m$ projection. Then we can define the level $m$ remainder as:
	\begin{equation}
	R_m(x(\theta)) \defeq \left(\text{Id} - P_m\right)x(\theta)
	\end{equation}
	Similarly to the projection, we can also write it as taking input from $u \in l^2$.
\end{definition}
We can now define the condition number of the solution as follows:
\begin{definition}
	Let $f$ be strongly convex, with optimum $x^*$. Fix an orthonormal basis and let $u^* = \mathcal{I}(x^*)$ in that basis. Then the $\varepsilon$-condition number of the solution is $\kappa_{\varepsilon}$ and equal to $m$, where $m$ is the smallest number of basis functions such that:
	\begin{equation*}
	||R_m(u^*)||_2 = ||\left(\text{Id} - P_m\right)u^*||_2 < \varepsilon
	\end{equation*}
\end{definition}
To motivate this definition, we will see that the $\varepsilon$-condition number determines how long it takes for linear convergence to begin. In practice, we will assume that the basis is clear from context, and we will refer to the condition number of the problem as simply the \textit{condition number}, and the $\varepsilon$-condition number of the solution as the \textit{$\varepsilon$-condition number}. As a shorthand, we will refer to the condition number of the problem by $\kappa$ as normal, and the $\varepsilon$-condition number of the solution as $\kappa_{\varepsilon}$. In practice, we will also use $\kappa_{\varepsilon}$ to the refer to the curve produced by varying $\varepsilon$.

\section{Algorithm Formulation}
Now that we have covered the mathematical preliminaries, we detail our algorithm for gradient descent. Before that, we address another possible naive method for solving this problem which could be a response to the formalism we have introduced so far: truncate to a fixed level $m$, and estimate each coefficient of $u^*$ separately through a Monte Carlo approximation:
\begin{equation}
u^*_i = \int_{\Theta} x^*(\theta) B_i(\theta) \pi(d\theta)
\end{equation}
where each $x^*(\theta)$ in the estimation of the integral is obtained through gradient descent. This method may be considered if we want to find the variance for example, which can be written as $||u^*||_2^2$.

To see the effects of this method in terms of complexity, we will consider the motivating example of using Jacobi polynomials in the case where the dimension of $x$ is $1$ and of $\theta$ as $d$ as in \cite{crepey2020uncertainty}, and we wish to estimate the variance.

We truncate to a fixed level $m$, and then perform gradient descent a large number of times in order to construct a Monte Carlo approximation to the vector $u$ as explained earlier. As outlined in \cite{crepey2020uncertainty}, in order to balance the error of the truncation and the actual approximation from the Monte Carlo, we must have the number of samples to approximate the $u^*_i$ increase as $\varepsilon^{-1+d/(2(\eta-1))}$, where $\eta$ is the order of differentiability of $x^*(\theta)$. Note that for us, each Monte Carlo sample must be obtained by running gradient descent, so this is the amount of times gradient descent has to be run to convergence, and we suffer from the curse of dimensionality for large $d$.

In our case, we will show that we converge linearly to the correct vector $u^*$, and there are no nested computations that require many runs of gradient descent like the method above.

We now move on to discussing our algorithm, which is presented in Algorithm \ref{alg::UQSpecific}. As of yet, we have not discussed how to estimate the vector $D_{m_k} f(x,\theta)$. The most natural is to perform a Monte Carlo procedure to approximate the integrals $\int_{\theta} \nabla f(x(\theta),\theta) B_i(\theta) \pi(d\theta)$. This is the procedure we use in our experiments, though we emphasise that other procedures could be used and our proofs are independent of this procedure, so long as some basic assumptions are satisfied.

We now discuss the assumptions on the noise model of the estimate of the gradient. First define the filtration $\mathcal{F}_k$ to be the $\sigma$-algebra generated by all random variables used to estimate the gradient operator in the first $k$ iterations. Then, if $D_{m_k}(f(x_k(\theta),\theta))$ is our gradient at the $k$th iteration, as in Definition \ref{definition::basisGrad} and $D'_{m_k}(f(x_k(\theta),\theta)) = D_k(f(x_k(\theta),\theta)) + e_k$ is our noisy estimate with both evaluated at $x_k(\theta)$, we have:


\begin{equation}
\mathbb{E}\left(e_k | \mathcal{F}_{k}\right) = 0, \quad \mathbb{E}\left(e_k^2 | \mathcal{F}_{k}\right) \leq C +  C_G||D_k(x_k)||_{\pi}^2, \label{equation::noiseModel}
\end{equation}

As mentioned previously, rather than solving for an optimal $x^*$, we will solve for an optimal vector of basis coefficients $u^*$, which will be related to $x^*$ via the canonical isomorphism. However, the vector $u$ is infinite-dimensional, so we cannot practically implement gradient descent on all coefficients at once.

To get around this, we truncate the problem to a finite level $m_k$, or set all $u_i, i > m_k = 0$. We then only update the coefficients $u_1, \ldots, u_{m_k}$. 

We will soon see that performing an iteration of this form will have us converge to a different optimum to $u^*$, which we denote as $u^*_{m_k}$. We will sometimes refer to this as a \textit{truncated optimum} or a \textit{level $m_k$ optimum}. We note in general that $P_{m_k}(u^*)$ is not in general equal to $u^*_{m_k}$, although we will later show that the distance between the two converges to zero in \ref{lemma::LinConvRemainder}. At every iteration, we will change $m_k$ in such a manner that $m_k \rightarrow \infty$.

This can be thought of as performing iterations on a family of optimisation problems that converge to the ``correct'' problem, in an effort to save on complexity while still eventually converging to the right point. In later sections, we will see an experimental verification of this, but additionally we will see that this procedure also functions as variance reduction on the output.


\begin{algorithm}
	\caption{Uncertainty Quantification for Gradient Descent}
	\label{alg::UQSpecific}
	\begin{algorithmic}
		\STATE{Input: Function $f(x, \theta)$, a sequence of steps $\{\gamma_k\}$, a number of iterations $K$, a sequence of truncation points $\{m_k\}$, an initial vector $u^0 \in l^2$ with at most the first $m_0$ entries nonzero.}
		\STATE{Output: A vector of coefficients $u_i$}
		\FOR{$k = 1, \ldots, K$}
		\STATE{Define $x^{k-1} = \mathcal{I}(u^{k-1})$}
		\STATE{For all $j > m_{k-1}$, let $u^k_j = 0$}
		\STATE{Estimate the gradient vector $D_{m_k}(f(x_{k}(\theta),\theta))$ as $D'_{m_k}(f(x_{k}(\theta),\theta))$}
		\STATE{Update $u^k = u^{k-1} + \gamma_kD'_{m_k} f(x, \theta)$}
		\ENDFOR
		\RETURN $u$
	\end{algorithmic}
\end{algorithm}

\subsection{Two Important Properties}
In this section we show  two properties that will be useful for analysing our algorithm. Firstly, we show that the optimum of the truncated problem converges to the true optimum, and give an explicit convergence rate. We then analyse the error of computing the integral in Algorithm \ref{alg::UQSpecific} in the case where we perform a Monte Carlo procedure, show that it satisfies the noise model given in Equation \eqref{equation::noiseModel}, and give explicit expressions for the quantities $C, C_G$.

\subsubsection{Convergence of the optimum} \label{subsection::OptConvergence}

In Algorithm \ref{alg::UQSpecific}, we are considering a series of optimisation problems where the gradient operator $D_m$ converges to the correct operator $D$ as $m \rightarrow \infty$. We would also like to know that the unique minima $x^*_m \rightarrow x^*$, the true minimum of the original function $f$, as $m \rightarrow \infty$.

In this subsection, we will show both that our strong convexity is sufficient for this convergence, and also give an explicit convergence rate in terms of the level $m$ remainder $R_m$.

We mentioned previously that in general, the truncated optimum is not equal to the projection of the true optimum to the same amount of basis functions. We now show that the distance between the two converges to zero and interestingly, we see its convergence properties are related to the condition number of the problem:
\begin{lemma} \label{lemma::LinConvRemainder}
	Let $u^*_m$ be level $m$ optimum , and let $x^*_m(\theta) = \mathcal{I}(u^*_m)$. Then the quantity $||P_{m}(u^*) - u^*_m||$ converges at a rate no slower than that of $R_{m}$. In particular, we have:
	\begin{equation*}
	||P_{m}(u^*) - u^*_m||_2^2 \leq \kappa ||R_{m}||_2^2,
	\end{equation*}
	where $\kappa$ is the condition number of $f$. 
\end{lemma}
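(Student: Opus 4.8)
The plan is to transport the whole problem to the sequence space $l^2$ via the isomorphism $\mathcal{I}$ and to argue entirely in terms of \emph{function values} rather than first-order conditions. Write $h(u)$ for the value of $f$ at the function whose coefficient vector is $u \in l^2$. By Parseval (the identity $\|x(\theta)\|_\pi = \|u\|_2$) both standing assumptions transfer verbatim: $h$ is $\mu$-strongly convex and its $l^2$-gradient — which is exactly the descent operator $D$ of Definition \ref{definition::basisGrad} — is $L$-Lipschitz on $l^2$. In these coordinates $u^*$ is the unconstrained minimizer of $h$ (so that $D(f(x^*)) = 0$), $u^*_m$ is the minimizer of $h$ over the subspace $l^2_m$, and $P_m u^*$ is simply the truncation of $u^*$ to its first $m$ coordinates, with $\|u^* - P_m u^*\|_2 = \|R_m(u^*)\|_2$.

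First I would establish the sandwich
\[
h(u^*) \;\le\; h(u^*_m) \;\le\; h(P_m u^*) \;\le\; h(u^*) + \tfrac{L}{2}\|R_m(u^*)\|_2^2 .
\]
The first inequality holds because $u^*_m$ minimizes $h$ over a subset of $l^2$; the second because $P_m u^* \in l^2_m$ while $u^*_m$ is optimal over $l^2_m$; the third is the descent lemma (the quadratic upper bound implied by $L$-Lipschitzness of the gradient) applied at $u^*$, using that the gradient vanishes there and that $P_m u^* - u^* = -R_m(u^*)$. Consequently the optimality gap of $P_m u^*$ on the subspace is controlled: $0 \le h(P_m u^*) - h(u^*_m) \le \tfrac{L}{2}\|R_m(u^*)\|_2^2$.

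Next I would convert this gap into a distance using strong convexity \emph{on the subspace}. Since $u^*_m$ minimizes the $\mu$-strongly convex function $h$ over $l^2_m$ and $P_m u^* \in l^2_m$, the standard quadratic lower bound at a minimizer gives $h(P_m u^*) - h(u^*_m) \ge \tfrac{\mu}{2}\|P_m u^* - u^*_m\|_2^2$. Combining this with the sandwich yields $\tfrac{\mu}{2}\|P_m u^* - u^*_m\|_2^2 \le \tfrac{L}{2}\|R_m(u^*)\|_2^2$, that is $\|P_m u^* - u^*_m\|_2^2 \le (L/\mu)\|R_m(u^*)\|_2^2 = \kappa\|R_m(u^*)\|_2^2$, which is the claim. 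If one prefers, replacing $L,\mu$ by the level-$m$ constants $L_m \le L$ and $\mu_m \ge \mu$ sharpens the factor to $\kappa_m$.

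The only real obstacle here is conceptual rather than computational: attempting to relate the two minimizers \emph{directly} through their first-order conditions — combining the monotonicity form of strong convexity with Cauchy--Schwarz and $L$-Lipschitzness — produces the weaker factor $\kappa^2$ (or, via co-coercivity, $(\kappa-1)^2/(4\kappa)$). The clean factor $\kappa$ comes precisely from comparing \emph{function values} and exploiting the nesting $l^2_m \subset l^2$, which squeezes $h(u^*_m)$ between $h(u^*)$ and $h(P_m u^*)$. The remaining verifications — the transfer of $\mu$ and $L$ through $\mathcal{I}$, the descent lemma, and the strong-convexity lower bound at a minimizer — are all standard.
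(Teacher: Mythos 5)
Your main argument is correct, but it reaches the bound by a genuinely different route than the paper. The paper works entirely with first-order conditions: it applies co-coercivity to the pair $(u^*,u^*_m)$, uses $Df(x^*)=0$ together with the vanishing of the first $m$ coordinates of $Df(x^*_m)$ and Cauchy--Schwarz to obtain the intermediate estimate $\|Df(x^*_m)\|_2\le L\|R_m\|_2$, and then feeds this into the strongly convex co-coercivity inequality and the orthogonal splitting $\|u^*-u^*_m\|_2^2=\|P_m u^*-u^*_m\|_2^2+\|R_m\|_2^2$ to extract the factor $\kappa$. You instead compare function values: the sandwich $h(u^*)\le h(u^*_m)\le h(P_m u^*)\le h(u^*)+\tfrac{L}{2}\|R_m\|_2^2$ combined with quadratic growth of $h$ at the constrained minimizer over $l^2_m$. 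Both yield exactly the constant $\kappa$; your version is shorter and makes the role of the nesting $l^2_m\subset l^2$ transparent, while the paper's version produces as a by-product the gradient bound $\|Df(x^*_m)\|_2\le L\|R_m\|_2$, which is reused later (in the discussion after Corollary~\ref{corol:Noisemodel} and in Equation~\eqref{equation::lemPrelim}); with your route that bound would need a separate (easy) derivation from $L$-smoothness. One precision point: since $f(x(\theta),\theta)$ is $\theta$-dependent, your $h(u)$ should be the $\pi$-expectation $M(f(\cdot))$, whose $l^2$-gradient is the operator $D$ of Definition~\ref{definition::basisGrad}; the descent lemma and the quadratic lower bound for $M(f)$ are exactly what the paper establishes in its appendix, so this is a matter of wording, not a gap.

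Two of your side remarks are inaccurate, though neither affects the proof. First, the claim that relating the minimizers through first-order conditions only yields $\kappa^2$ is contradicted by the paper itself, which obtains $\kappa$ by that route. Second, the proposed sharpening to $\kappa_m$ fails: the descent-lemma step of your sandwich is taken along the segment from $u^*$ to $P_m u^*$, and $u^*$ need not lie in $L^2_{\pi,m}$, so only the full constant $L$ is available there (the paper makes this exact observation after its proof); replacing $\mu$ by $\mu_m$ in the growth step is legitimate, so at best you get the factor $L/\mu_m$, not $L_m/\mu_m$.
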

Note that $R_{m}$ will converge to zero with some basis dependent rate, and so we have that same rate of convergence for $||P_{m}(u^*) - u^*_m||_2^2$. Additionally, as $u^* = P_{m}(u^*) + R_{m}$, this result immediately implies:
\begin{equation}
||u^* - u^*_m||_2^2 \leq (\kappa+1) ||R_{m}||_2^2
\end{equation}
\begin{proof}
	Recall that $f$ is strongly convex. As in Definition \ref{definition::basisGrad}, we will use $D f(x(\theta))$ to represent the basis vector of coefficients for the gradient of $f$ at $x(\theta)$, where we have suppressed the second input variable for clarity. In this case, we are considering infinite-dimensional vectors to represent all of the basis coefficients, and so do not use subscripts. The following inequality can be shown to still hold in the Appendix:
	\begin{equation*}
	\langle D f(x^*(\theta)) - D f(x^*_m(\theta)), u^* - u^*_m \rangle_2 \geq \frac{1}{L}||D f(x^*(\theta)) - D f(x^*_m(\theta))||_2^2
	\end{equation*}
	where we have used equivalence of the $\pi, 2$ norms. By definition, $D f(x^*(\theta))$ is zero, and $D f(x^*_m(\theta))$ is zero for the first $m$ entries. This means by taking Cauchy-Schwarz, we can simplify to:
	\begin{equation}
	||D f(x^*_m(\theta))||_2 || R_{m_k}||_2 \geq \frac{1}{L}||D f(x^*_m(\theta))||_2^2 \label{equation::convOptInter}
	\end{equation}
	meaning that $||D f(x^*_m(\theta))||^2$ decays at least as fast as $||R_{m}||^2$, and in particular we have:
	\begin{equation*}
	||D f(x^*_m(\theta))||_2 \leq L || R_{m}||_2
	\end{equation*}
	We can use another strong convexity inequality, which is also shown to still hold in the Appendix:
	\begin{align*}
	(L+\mu)\langle D f(x^*(\theta)) - D f(x^*_m(\theta)), &u^* - u^*_m \rangle_\pi \nonumber \\
	&\geq \mu L || u^* - u^*_m||_2^2 +   ||D f(x^*(\theta)) - D f(x^*_m(\theta))||_2^2
	\end{align*}
	then using the fact that $|| u^* - u^*_m||_2^2 = || P_{m}(u^*) - u^*_m||_2^2 + ||R_{m}||_2^2$, we find:
	\begin{equation*}
	(L+\mu) ||D f(x^*_m(\theta))||_2 || R_{m}||_2 \geq \mu L \left(|| P_{m}(u^*) - u^*_m||_2^2 + ||R_{m}||_2^2\right) +  ||D f(x^*_m(\theta))||_2^2.
	\end{equation*}
	Now using the result from Equation \eqref{equation::convOptInter}, we see:
	\begin{equation*}
	L(L+\mu) || R_{m}||_2^2 \geq \mu L \left(|| P_{m}(u^*) - u^*_m||_2^2 + ||R_{m}||_2^2\right) +  ||D f(x^*_m(\theta))||_2^2.
	\end{equation*}
	As the last term is non-negative, we can drop it and rearrange to get:
	\begin{equation*}
	\frac{L}{\mu}|| R_{m}||_2^2 \geq || P_{m}(u^*) - u^*_m||_2^2
	\end{equation*}
	which gives us our conclusion.
\end{proof}
Note in the above proof that we cannot use the condition number at the $m$th level, $\kappa_m$, as we are taking inequalities that involve the optimum $x^*(\theta)$, which could have an infinite number of non-zero basis coefficients.
\subsubsection{Error Analysis of Noisy Gradient Information} \label{sec::IntegralError}
For all practical purposes, we note that we cannot compute the entries of the vector $D_{m_k}$ exactly, and must compute them numerically. Therefore we must include this error in our convergence analysis. In our case, we suggest a simple Monte Carlo approximation for Algorithm \ref{alg::UQSpecific} where we independently sample $M_k$ values of $v, \theta$, and use Monte Carlo integration with $B_i(\theta)$ to obtain the coefficients. We are interested in bounding the error of:
\begin{equation} \label{equation::error}
e_k^2 = \sum_{i = 1}^{m_k} \left( \left(\frac{1}{M_k} \sum_{j=1}^{M_k} \nabla F(x(\theta_j),\theta_j,v_j) B_i(\theta_j) - \int \nabla f(x(\theta), \theta) B_i(\theta) \pi(d\theta)\right)^2 \right)
\end{equation}
where we are at iteration $k$, and using $m_k$ basis functions and $M_k$ Monte Carlo samples to perform the integration.

We now prove that taking $m_k$ and $M_k$ to be fixed, the error from the integration can be bounded above by a constant plus a factor of $||\nabla f(x(\theta), \theta)||_\pi^2$, which proceeds similarly to \cite{crepey2020uncertainty}:
\begin{theorem} \label{theorem::IntegralError}
	Consider Algorithm \ref{alg::UQSpecific}, and suppose we obtain the entries of $D_{m_k}(f(x(\theta),\theta)$ through Monte Carlo Integration of the gradient against the basis functions. Suppose at iteration $k$ we have $m_k$ basis functions, and we use $M_k$ Monte Carlo samples taken to approximate each integral. Define the error $e_k$ as in Equation \eqref{equation::error}.
	
	Suppose also we have the following bound on the second moment of $\nabla F$ with respect to $v$:
	\begin{equation*}
	\mathbb{E}_V \left( \nabla F(x(\theta),\theta,v)^2\right) \leq V + V_G \left(\nabla f(x(\theta),\theta)\right)^2,
	\end{equation*} 
	
	Let $u_{m_k}^*$ be the level $m_k$ optimum, and let $P_{m_k}$ be the projection sending elements of $l^2$ to $l^2_{m_k}$. Then we have:
	\begin{align}
	\mathbb{E}(||e_k||_2^2 \vert &\mathcal{F}_{k-1}) \nonumber \\ 
	&=\sum_{i=1}^{m_k} \mathbb{E}(e_{i,k}^2 \vert \mathcal{F}_{k-1}) \nonumber \\
	&=\sum_{i=1}^{m_k}\mathbb{E}\left( \left(\frac{1}{M_k} \sum_{j=1}^{M_k} \nabla F(x^k(\theta_j),\theta_j,v_j) B_i(\theta_j) B_i(\theta_j) - \int \nabla f(x^k(\theta), \theta) B_i(\theta) \pi(d\theta)\right)^2 \vert \mathcal{F}_{k-1} \right) \nonumber \\
	&\leq  \frac{Q_{m_k}}{M_k} \left(V_G ||\nabla f(x^k(\theta), \theta)||_\pi^2 + V \right) \nonumber 
	\end{align}
	where $Q_m = \sup_\theta \sum_{i=1}^{m_k} |B_i(\theta)|^2$, $u = \mathcal{I}(x), x^* = \mathcal{I}(u^*)$, and $V_F$ is a bound on the variance of the gradient of $F$ for all $x(\theta) \in L^2_\pi$, with respect to $v$.
\end{theorem}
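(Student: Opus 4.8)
The plan is to recognize the estimator of each coefficient as an average of conditionally i.i.d.\ random variables that is unbiased given $\mathcal{F}_{k-1}$, and then to carry out a standard Monte Carlo variance computation. First I would fix the conditioning: since $\mathcal{F}_{k-1}$ contains all randomness through iteration $k-1$, the iterate $x^k$ is $\mathcal{F}_{k-1}$-measurable, while the fresh samples $(\theta_j, v_j)_{j=1}^{M_k}$ drawn at iteration $k$ are independent of $\mathcal{F}_{k-1}$ and i.i.d.\ across $j$. Writing $Y_{i,j} = \nabla F(x^k(\theta_j), \theta_j, v_j) B_i(\theta_j)$, I would first show that the estimator is unbiased, i.e.\ that $\mathbb{E}(Y_{i,j} \mid \mathcal{F}_{k-1})$ equals the target integral. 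This follows from the tower property: integrating over $v_j$ first turns $\nabla F$ into $\nabla f$ via the defining relation $f = \mathbb{E}_v F$, and the remaining expectation over $\theta_j \sim \pi$ reproduces $\int \nabla f(x^k(\theta), \theta) B_i(\theta)\, \pi(d\theta)$.

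With unbiasedness established, $\mathbb{E}(e_{i,k}^2 \mid \mathcal{F}_{k-1})$ is exactly the conditional variance of the sample mean. Because the $Y_{i,j}$ are conditionally i.i.d., this variance is $\frac{1}{M_k}$ times the variance of a single term, which I would bound above by its second moment, giving $\frac{1}{M_k}\mathbb{E}(\nabla F(x^k(\theta), \theta, v)^2 B_i(\theta)^2 \mid \mathcal{F}_{k-1})$.

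Next I would sum over $i = 1, \dots, m_k$ and interchange the finite sum with the expectation, collecting the factor $\sum_{i=1}^{m_k} B_i(\theta)^2$. Bounding this quantity by its supremum $Q_{m_k}$ pulls it out of the expectation and leaves $\frac{Q_{m_k}}{M_k}\,\mathbb{E}(\nabla F(x^k(\theta), \theta, v)^2 \mid \mathcal{F}_{k-1})$. Finally I would apply the assumed second-moment bound on $\nabla F$ with respect to $v$ inside the remaining $\theta$-expectation; integrating $V + V_G(\nabla f)^2$ against $\pi$ produces $V + V_G \|\nabla f(x^k(\theta), \theta)\|_\pi^2$, which is precisely the claimed inequality.

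The argument is almost entirely routine; the only points requiring care are the bookkeeping of the conditioning (so that $x^k$ may be treated as fixed while the new samples are fresh and independent) and the correct order of integration in the unbiasedness step, where $v$ must be integrated before $\theta$ so that the $\mathbb{E}_v$-assumption on $\nabla F$ can be invoked. I expect no genuine obstacle beyond these organizational matters.
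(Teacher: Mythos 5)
Your proposal is correct and reaches exactly the stated bound; the only difference from the paper is organizational. The paper first conditions on the sampled parameters $\Theta_k$ and splits $e_{i,k} = a_{i,k} + b_{i,k}$, where $a_{i,k}$ collects the $v$-noise (the discrepancy between $\nabla F$ and $\nabla f$ at the sampled $\theta_j$) and $b_{i,k}$ is the pure Monte Carlo quadrature error of $\nabla f$ against $B_i$; the cross term vanishes because $a_{i,k}$ has zero conditional mean given $\Theta_k$, and the two pieces are bounded separately, contributing $\frac{Q_{m_k}}{M_k}\bigl(V + (V_G-1)\|\nabla f\|_\pi^2\bigr)$ and $\frac{Q_{m_k}}{M_k}\|\nabla f\|_\pi^2$ respectively. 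This split is precisely the law of total variance for the single-sample term $Y_{i,j}$, which your one-shot bound (conditional variance of the sample mean $\le \frac{1}{M_k}\,\mathbb{E}[Y_{i,j}^2]$, then sum over $i$, pull out $Q_{m_k}$, and apply the second-moment hypothesis on $\nabla F$) subsumes in a single step. Your route is shorter and makes the unbiasedness and independence bookkeeping more explicit; the paper's route has the minor advantage of exhibiting the two error sources separately, which is the form reused in the subsequent corollary where the $\|\nabla f\|_\pi^2$ term is re-expanded around $x^*_{m_k}$. Either way the constants agree, so there is no gap.
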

\begin{proof}
	First consider just $e_{i,k}$. We split it into two parts as follows:
	\begin{equation*}
	e_{i,k} = a_{i,k} + b_{i,k}
	\end{equation*}
	where we have:
	\begin{align*}
	a_{i,k} &= \frac{1}{M_k}\sum_{j=1}^{M_k}\left(  \nabla F(x^k(\theta_j),\theta_j,v_j) B_i(\theta_j) - \nabla f(x^k(\theta_j), \theta_j) B_i(\theta_j) \right), \nonumber \\
	b_{i,k} &= \left(\frac{1}{M_k}\sum_{j=1}^{M_k} \nabla f(x(\theta_j), \theta_j) B_i(\theta_j)\right) - \int_{\Theta} \nabla f(x^k(\theta),\theta) \pi(d\theta),
	\end{align*}
	where $x^k$ is the point found after the first $k-1$ steps from the filtration $\mathcal{F}_{k-1}$. We now seek to bound this error where we also fix the $\theta_j$ used in the $k$th step, so the only randomness is in the $v$ variables used. We label the set of $\theta$ used as $\Theta_k$. Expanding the definition of the expectation of the square, we find:
	\begin{align} \label{lem::ErrorEquationSum}
	\mathbb{E}(e_{i,k}^2 | \mathcal{F}_{k-1}, \Theta_k)& \nonumber \\
	&= \mathbb{E}\left( \left(a_{i,k} + b_{i,k}\right)^2 | \mathcal{F}_{k-1}, \Theta_k\right) \nonumber \\
	&= \mathbb{E}\left( a_{i,k}^2 | \mathcal{F}_{k-1}, \Theta_k\right) + \mathbb{E}\left( b_{i,k}^2 | \mathcal{F}_{k-1}, \Theta_k\right),
	\end{align}
	where we have used the fact that $b_{i,k}$ is constant under this expectation, and $\mathbb{E}\left( a_{i,k} | \mathcal{F}_{k-1}, \Theta_k\right) = 0$ to get rid of the cross term. We now bound each of these terms in turn, then take expectations over all $\Theta_k, \mathcal{F}_{k-1}$.
	
	Firstly:
	\begin{align*}
	\mathbb{E}\left( a_{i,k}^2 | \mathcal{F}_{k-1}, \Theta_k\right) = \frac{1}{M_k^2} \sum_{j=1}^{M_k} \int_{V}\left( \nabla F(x^k(\theta_j),\theta_j,v)  - \nabla f(x^k(\theta_j), \theta_j)  \right)^2 \mu(\theta, dv) B_i(\theta_j)^2.
	\end{align*}
	and now taking expectation over $\Theta_k$, we find:
	\begin{align*}
	\mathbb{E}\left( a_{i,k}^2 | \mathcal{F}_{k-1}, \right) = \frac{1}{M_k} \int_{\Theta} \int_{V}\left( \nabla F(x^k(\theta),\theta,v)  - \nabla f(x^k(\theta), \theta)  \right)^2 \mu(\theta, dv) B_i(\theta)^2 \pi(d\theta)
	\end{align*}
	Recalling the definition of $Q_{m_k}$ and summing over $k$, we can say:
	\begin{equation*}
	\mathbb{E}\left( ||a_{k}||_2^2 | \mathcal{F}_{k-1}, \right) = \frac{Q_{m_k}}{M_k} \int_{\Theta} \int_{V}\left( \nabla F(x^k(\theta),\theta,v)  - \nabla f(x^k(\theta), \theta)  \right)^2 \mu(\theta, dv) \pi(d\theta).
	\end{equation*}
	We now just need to bound the inner term. By expanding the square inside, using the finite second moment $V_F$ of the gradient, and summing over $i$ we find:
	\begin{equation*}
	\mathbb{E}\left( ||a_{k}||_2^2 | \mathcal{F}_{k-1}, \right) \leq \frac{Q_{m_k}}{M_k} \left( V + \left(V_G-1\right)||\nabla f(x^k(\theta),\theta)||_\pi^2\right).
	\end{equation*}
	We now turn our attention to the $b_{i,k}$ term. Here, we note that there is no dependence on $v$, and so find:
	\begin{align*}
	\mathbb{E}(b_{i,k}^2 | \mathcal{F}_{k-1}, \Theta_k)& \nonumber \\
	&= \frac{1}{M_k^2}\sum_{j=1}^{M_k}\mathbb{E}\left( \left( \nabla f(x^k(\theta_j), \theta_j) B_i(\theta_j) - \int_{\Theta} \nabla f(x^k(\theta),\theta) \pi(d\theta)\right)^2 | \mathcal{F}_{k-1}, \Theta_k\right)
	\end{align*}
	then, on expanding and taking expectation of the right hand side with respect to $\Theta_k$, we find:
	\begin{equation*}
	\mathbb{E}(b_{i,k}^2 | \mathcal{F}_{k-1}) \leq \frac{1}{M_k} \int_\theta \nabla f(x^k(\theta),\theta)^2 B_i^2(\theta) \pi(d\theta).
	\end{equation*}
	After this, we recall the definition of $Q_{m_k}$, and sum the $e_{i,k}$ expectations over $i$ to find:
	\begin{equation*}
	\mathbb{E}(||b_k||_2^2 | \mathcal{F}_{k-1}) \leq \frac{Q_{m_k}}{M_k} \int \nabla f(x^k(\theta), \theta)^2 \pi(d\theta).
	\end{equation*}
	which gives our result on considering Equation \eqref{lem::ErrorEquationSum} and summing over $i$.
\end{proof}
For each of the two algorithms, it will be useful for us to write this quantity in the following alternative form: 
\begin{corollary}
	An alternative representation for the error bound of Theorem \ref{theorem::IntegralError} is:
	\begin{align}
	\mathbb{E}(||e_k||_2^2 | \mathcal{F}_{k-1}) \leq	&\frac{Q_{m_k}}{M_k} \left(2V_G||\nabla f(x^*_{m_k}(\theta), \theta)||_\pi^2 + V\right) + \frac{2Q_{m_k}}{M_k} V_G||\nabla f(x^k(\theta), \theta) - \nabla f(x^*_{m_k}(\theta), \theta)||_\pi^2 \nonumber
	\end{align} 
	where $x^k$ is the point found after $k-1$ iterations of the algorithm defined by the filtration $\mathcal{F}_{k-1}$.
\end{corollary}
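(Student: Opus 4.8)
The plan is to derive the alternative representation directly from the bound already established in Theorem \ref{theorem::IntegralError}, namely
\begin{equation*}
\mathbb{E}(||e_k||_2^2 \mid \mathcal{F}_{k-1}) \leq \frac{Q_{m_k}}{M_k}\left(V_G||\nabla f(x^k(\theta),\theta)||_\pi^2 + V\right),
\end{equation*}
by splitting the gradient at the current iterate around the gradient at the level $m_k$ optimum. The whole argument is a controlled application of the triangle inequality together with the elementary bound $(a+b)^2 \le 2a^2 + 2b^2$ (Young's inequality), so I expect no genuine obstacle; the only care needed is to perform the splitting inside the $\pi$-norm before the error bound of the theorem is invoked, rather than afterwards.

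First I would write, for each fixed $\theta$,
\begin{equation*}
\nabla f(x^k(\theta),\theta) = \nabla f(x^*_{m_k}(\theta),\theta) + \left(\nabla f(x^k(\theta),\theta) - \nabla f(x^*_{m_k}(\theta),\theta)\right),
\end{equation*}
and then apply the triangle inequality in the $\pi$-norm to obtain
\begin{equation*}
||\nabla f(x^k(\theta),\theta)||_\pi \leq ||\nabla f(x^*_{m_k}(\theta),\theta)||_\pi + ||\nabla f(x^k(\theta),\theta) - \nabla f(x^*_{m_k}(\theta),\theta)||_\pi.
\end{equation*}

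Next I would square both sides and use $(a+b)^2 \le 2a^2 + 2b^2$ to get
\begin{equation*}
||\nabla f(x^k(\theta),\theta)||_\pi^2 \leq 2||\nabla f(x^*_{m_k}(\theta),\theta)||_\pi^2 + 2||\nabla f(x^k(\theta),\theta) - \nabla f(x^*_{m_k}(\theta),\theta)||_\pi^2.
\end{equation*}
Substituting this into the bound from Theorem \ref{theorem::IntegralError} and grouping the $V$ term with the optimum term yields precisely the claimed inequality, since the factor $V_G$ distributes over both squared terms and the coefficient $2Q_{m_k}V_G/M_k$ attaches to the difference term. The resulting form is useful because the first term depends only on the fixed level $m_k$ optimum (a quantity controlled by Lemma \ref{lemma::LinConvRemainder}) and a constant, while the second term measures the distance of the iterate from that optimum, which the Lipschitz and strong-convexity structure will later let us fold into the convergence recursion.
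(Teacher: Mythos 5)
Your proposal is correct and follows essentially the same route as the paper: both decompose $\nabla f(x^k(\theta),\theta)$ around $\nabla f(x^*_{m_k}(\theta),\theta)$ and apply $(a+b)^2 \leq 2a^2 + 2b^2$ to the bound from Theorem \ref{theorem::IntegralError}. The only cosmetic difference is that you pass through the triangle inequality on the $\pi$-norm before squaring, whereas the paper applies the elementary inequality directly to the pointwise decomposition; the resulting bound is identical.
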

\begin{proof}
	We first recall that the gradient of $x_{m_k}^*$ has the first $m_k$ basis coefficients zero. Then, using the equality $\nabla f(x(\theta), \theta) = \nabla f(x(\theta),\theta) - \nabla f(x_{m_k}^*(\theta), \theta) + \nabla f(x_{m_k}^*(\theta), \theta)$ and also $(a+b)^2 \leq 2a^2 + 2b^2$ with $a = \nabla f(x(\theta),\theta) - \nabla f(x_{m_k}^*(\theta), \theta)$ and $b = \nabla f(x_{m_k}^*(\theta), \theta)$ we get the result.
\end{proof}
With this result, we can now say that Algorithm \ref{alg::UQSpecific} satisfies the noise model given in Equation \eqref{equation::noiseModel} for a fixed amount of basis functions. Here we use the fact that $\mathbb{E}\left(||(g'_k)^2||_\pi\right) = \mathbb{E}\left(||(g_k+e_k)^2||_\pi\right)$, and the fact that $\mathbb{E}\left(||e_k||\right) = 0$:
\begin{corollary} \label{corol:Noisemodel}
	For a fixed amount of basis functions $m_k$, Algorithm \ref{alg::UQSpecific} satisfies the noise model in Equation \eqref{equation::noiseModel} where we have:
	\begin{equation}
	C = \frac{Q_{m_k}}{M_k} \left(2V_G||\nabla f(x^*_{m_k}(\theta), \theta)||_\pi^2 + V\right), \qquad C_G = 1 + \frac{2V_G Q_{m_k}}{M_k}.
	\end{equation}
\end{corollary}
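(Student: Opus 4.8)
The plan is to read the two constants off the alternative representation of the error bound (the corollary immediately preceding this statement) and to supply the one remaining ingredient, namely the unbiasedness of the Monte Carlo estimator, which gives the zero-mean half of the noise model \eqref{equation::noiseModel}. First I would verify $\mathbb{E}(e_k \mid \mathcal{F}_k) = 0$. This is already implicit in the proof of Theorem~\ref{theorem::IntegralError}: writing $e_{i,k} = a_{i,k} + b_{i,k}$, the term $a_{i,k}$ has zero conditional mean because $\mathbb{E}_v \nabla F(x(\theta),\theta,v) = \nabla f(x(\theta),\theta)$, while $b_{i,k}$ has zero conditional mean because the average $\frac{1}{M_k}\sum_j \nabla f(x^k(\theta_j),\theta_j)B_i(\theta_j)$ is an unbiased estimator of $\int \nabla f(x^k(\theta),\theta)B_i(\theta)\pi(d\theta)$ when the $\theta_j$ are drawn i.i.d. from $\pi$. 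Hence each coordinate, and therefore $e_k$, is conditionally centred.

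For the second-moment bound I would start from the preceding corollary, which already supplies
\[
\mathbb{E}(\|e_k\|_2^2 \mid \mathcal{F}_{k-1}) \le C + \frac{2V_G Q_{m_k}}{M_k}\,\|\nabla f(x^k(\theta),\theta) - \nabla f(x^*_{m_k}(\theta),\theta)\|_\pi^2,
\]
with $C = \frac{Q_{m_k}}{M_k}\bigl(2V_G\|\nabla f(x^*_{m_k}(\theta),\theta)\|_\pi^2 + V\bigr)$. The constant term is verbatim the claimed $C$, so no work is needed there, and the only task is to convert the gradient-difference term into a multiple of $\|D_k(x_k)\|_\pi^2$. Here I would invoke the identity flagged just before the statement, $\mathbb{E}(\|g'_k\|_\pi^2 \mid \mathcal{F}_k) = \|g_k\|_\pi^2 + \mathbb{E}(\|e_k\|_\pi^2 \mid \mathcal{F}_k)$, with $g_k = D_k(x_k)$ and $g'_k = g_k + e_k$; the cross term vanishes precisely by the zero-mean property just established. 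The leading $\|g_k\|_\pi^2$ is exactly what produces the ``$+1$'' in $C_G = 1 + 2V_G Q_{m_k}/M_k$, while the factor $2V_G Q_{m_k}/M_k$ is inherited unchanged from the displayed bound.

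The main obstacle is the identification of $\|\nabla f(x^k) - \nabla f(x^*_{m_k})\|_\pi^2$ with $\|D_k(x_k)\|_\pi^2$. Since $x^*_{m_k}$ is the level-$m_k$ optimum, its truncated gradient vanishes, $D_{m_k} f(x^*_{m_k}) = 0$, so $D_k(x_k) = D_{m_k}f(x^k)$ is exactly the projection $P_{m_k}$ of the full gradient difference onto $l^2_{m_k}$. Decomposing orthogonally,
\[
\|\nabla f(x^k) - \nabla f(x^*_{m_k})\|_\pi^2 = \|D_k(x_k)\|_\pi^2 + \|R_{m_k}(\nabla f(x^k) - \nabla f(x^*_{m_k}))\|_\pi^2,
\]
so the full difference dominates the truncated gradient and the two coincide only up to the higher-order remainder $R_{m_k}$. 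The delicate point, and the step I would spend the most care on, is therefore controlling or absorbing this remainder rather than simply equating the two norms: because $R_{m_k}$ decays as $m_k$ grows (cf. Lemma~\ref{lemma::LinConvRemainder}), the identification is legitimate in the regime in which the algorithm operates, and I would make this precise by showing the remainder is negligible relative to $\|D_k(x_k)\|_\pi^2$ at the truncation levels used, after which the two constants follow by collecting terms.
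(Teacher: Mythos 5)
Your first two paragraphs reproduce the paper's (terse) argument faithfully: the zero-mean property comes from the unbiasedness of the Monte Carlo estimator already used inside the proof of Theorem~\ref{theorem::IntegralError}, the constant $C$ is read off verbatim from the preceding corollary, and the ``$+1$'' in $C_G$ arises from $\mathbb{E}(\|g'_k\|^2\mid\mathcal{F}_k)=\|g_k\|^2+\mathbb{E}(\|e_k\|^2\mid\mathcal{F}_k)$, the cross term vanishing by conditional centredness. Up to that point you are on the paper's route.

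The gap is in your final step. You try to replace $\|\nabla f(x^k)-\nabla f(x^*_{m_k})\|_\pi^2$ by $\|D_k(x_k)\|_\pi^2$, correctly observe that the orthogonal decomposition gives
\begin{equation*}
\|\nabla f(x^k)-\nabla f(x^*_{m_k})\|_\pi^2=\|D_k(x_k)\|_2^2+\|R_{m_k}(\nabla f(x^k)-\nabla f(x^*_{m_k}))\|_2^2\;\ge\;\|D_k(x_k)\|_2^2,
\end{equation*}
i.e.\ the inequality runs the wrong way, and then propose to ``absorb'' the remainder because it is negligible at the truncation levels used. That last claim is not justified and in fact fails in general: near the truncated optimum $\|D_k(x_k)\|_2\to 0$ while the tail of the gradient difference has no reason to be dominated by it, and Lemma~\ref{lemma::LinConvRemainder} controls the remainder of $u^*$, not of $\nabla f(x^k)-\nabla f(x^*_{m_k})$ at an arbitrary iterate. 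The way out — and the reading consistent with how the corollary is actually consumed — is to notice that the hypothesis of Theorem~\ref{theorem::GDFixedUQ} bounds $\mathbb{E}(g_k^2\mid\mathcal{F}_k)$ by $C+C_G\|\nabla f(x^k)-\nabla f(x^*_{m_k})\|^2$, i.e.\ by the \emph{full} gradient difference, not by $\|D_k(x_k)\|^2$ as Equation~\eqref{equation::noiseModel} literally states. With that target the same orthogonal decomposition is used in the favourable direction, $\|D_{m_k}f(x^k)\|_2^2=\|P_{m_k}(\nabla f(x^k)-\nabla f(x^*_{m_k}))\|_2^2\le\|\nabla f(x^k)-\nabla f(x^*_{m_k})\|_\pi^2$, and $C_G=1+2V_GQ_{m_k}/M_k$ follows by collecting terms with no remainder to absorb. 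So the difficulty you flagged is real, but it is a notational inconsistency in the statement of the noise model rather than something to be fixed by an asymptotic negligibility argument; as written, your closing step would not go through.
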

We briefly discuss the behaviour of the first error term as $m \rightarrow \infty$. From Lemma \ref{lemma::LinConvRemainder}, we know that the quantities $||\nabla f(x^*_{m}(\theta), \theta)||_2^2$ and $||R_{m}||_2^2$ have the same convergence rate up to a constant. If this convergence to zero is faster than the rate of growth of $Q_{m}$, we will see that our method converges to the exact optimum, as otherwise we will have $C$ and $C_G$ blow up for large $m$. In general we will require that the rate of convergence of $||R_{m}||^2$ is faster than the growth rate of $Q_{m}$ to converge to the exact optimum as $m_k \rightarrow \infty$.

\section{Gradient Descent} \label{sec::GD}
In this section, we give a convergence rate for Algorithm \ref{alg::UQSpecific}. We recall that we denote the vector of coefficients obtained at the $k$th iterate by $u^k$. When represented as an element of $l^2$, all entries above $m_k$ are $0$. 

We also recall that we refer to the remainder at $k$ basis functions as $R_k = \left(\text{Id} - P_k\right)u^*$. Over the course of this section, we will be interested in the following quantity:
\begin{equation}
||u^k-u^*||_2^2 = ||u^k - P_k(u^*)||_2^2 + ||R_{m_k}||_2^2 \label{basicconvone}
\end{equation}
For typical bases, a regularity condition can be derived such that we have linear convergence in $m_k$ for $R$. For example, we have the following result for the Jacobi basis \cite{canuto2006spectral}:

\begin{theorem}
	Let $u$ be a $d$ dimensional function on $[-1,1]$ that has $m$ degrees of regularity. Then the truncation error of the Jacobi series of $u$ decays like $x^{-m}$. That is, we have the following:
	\begin{equation*}
	||u - P_n(u)||_2 \leq Cn^{-\frac{2(m-1)}{d}}
	\end{equation*}
	where $C$ depends on $m$ and the norm of $u$ and its first $m$ derivatives.
\end{theorem}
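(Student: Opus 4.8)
The plan is to reduce the $d$-dimensional bound to a one-dimensional spectral-decay estimate using the tensor-product structure of the Jacobi basis, and to obtain that one-dimensional estimate from the self-adjointness of the Jacobi differential operator. Since this is a classical result, the real content is the one-dimensional coefficient-decay estimate together with a counting argument, both of which can be extracted from \cite{canuto2006spectral}.

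First, in one dimension, I would expand $u = \sum_{k \geq 0} \hat u_k \phi_k$ in the orthonormal Jacobi basis $\{\phi_k\}$, so that by Parseval the truncation error is exactly the tail of the coefficient sequence, $\|u - P_N u\|_2^2 = \sum_{k > N} |\hat u_k|^2$, where $N$ is the one-dimensional polynomial degree. The key structural fact is that each $\phi_k$ is an eigenfunction of the singular Sturm--Liouville (Jacobi) operator $\mathcal{L}$, with eigenvalue $\lambda_k$ growing like $k^2$. Because the Jacobi weight degenerates at the endpoints $\pm 1$, the operator $\mathcal{L}$ is self-adjoint for $\langle\cdot,\cdot\rangle_\pi$ with no surviving boundary terms, and repeated integration by parts yields the identity $\hat u_k = \lambda_k^{-s}\langle \mathcal{L}^s u, \phi_k\rangle_\pi$ for every $s$ for which $\mathcal{L}^s u$ is square-integrable against the weight.

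Given this identity the tail estimate is immediate: using the monotonicity of $\lambda_k$ together with Bessel's inequality,
\[
\sum_{k > N} |\hat u_k|^2 = \sum_{k > N} \lambda_k^{-2s}\,|\langle \mathcal{L}^s u, \phi_k\rangle_\pi|^2 \leq \lambda_N^{-2s}\,\|\mathcal{L}^s u\|_\pi^2 .
\]
Since $\lambda_N \sim N^2$, this gives $\|u - P_N u\|_2 \lesssim N^{-2s}\,\|\mathcal{L}^s u\|_\pi$, and taking $s$ as large as the regularity of $u$ allows (so that $s$ corresponds to $m-1$ under the regularity convention used, $\mathcal{L}^{m-1}$ involving $2(m-1)$ derivatives) produces the one-dimensional rate $N^{-2(m-1)}$, with a constant depending on $u$ and its first $m$ derivatives, as claimed.

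It remains to pass to $d$ dimensions. Here I would use the tensor-product Jacobi basis and observe that a full truncation keeping degree $N$ in each coordinate retains $n \sim N^d$ basis functions; the multivariate truncation error is controlled by the worst one-dimensional factor, so applying the one-dimensional estimate in each variable and summing gives the same rate in $N$. Inverting $n \sim N^d$, that is $N \sim n^{1/d}$, and substituting into $N^{-2(m-1)}$ yields the stated exponent $n^{-2(m-1)/d}$. I expect the genuine obstacle to be the one-dimensional decay step rather than the counting: one must verify that the weighted boundary contributions in the repeated integration by parts really vanish (this is exactly where the precise Jacobi weight and the definition of ``$m$ degrees of regularity'' must be pinned down), and track the weighted Sobolev norms so that the constant depends only on $u$ and its first $m$ derivatives. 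The tensor-product and degree-to-count steps are routine by comparison.
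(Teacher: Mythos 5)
The paper does not actually prove this statement; it is quoted verbatim as a known result with a citation to \cite{canuto2006spectral}, so there is no in-paper argument to compare against. Your Sturm--Liouville route --- Parseval for the tail, eigenfunction identity $\hat u_k = \lambda_k^{-s}\langle \mathcal{L}^s u,\phi_k\rangle_\pi$ via repeated integration by parts with vanishing weighted boundary terms, Bessel, then $\lambda_N\sim N^2$, followed by a tensor-product counting argument $n\sim N^d$ --- is exactly the standard derivation underlying the cited result, so in that sense you have reconstructed the ``right'' proof rather than an alternative one.

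There is, however, one gap you flag but do not close, and it is not merely a convention issue: each application of $\mathcal{L}$ costs two derivatives, so requiring $\mathcal{L}^s u\in L^2_\pi$ with $u$ having only $m$ classical derivatives forces $2s\le m$, and your chain of inequalities then yields the one-dimensional rate $N^{-2s}=N^{-m}$, which is the form the estimate actually takes in \cite{canuto2006spectral}. To land on the stated exponent $N^{-2(m-1)}$ you would need $s=m-1$, i.e.\ $2(m-1)$ derivatives in the weighted sense, which is incompatible with the theorem's claim that the constant depends only on the first $m$ derivatives of $u$ (for $m>2$). Either ``$m$ degrees of regularity'' must be read as membership in the domain of $\mathcal{L}^{m-1}$ (a $2(m-1)$-derivative condition inherited from the convention in \cite{crepey2020uncertainty}), or the exponent should be $-m/d$; your proof as written establishes the latter, not the former, and you should say explicitly which regularity class you are assuming rather than deferring it. The tensor-product step is fine as a heuristic but also deserves one sentence on how the multivariate basis is enumerated, since $P_n$ in the paper truncates the first $n$ functions of a single ordered sequence rather than a degree-$N$ hypercube.
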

Another naive approach may be to take a very large number of basis functions $M$ and just run iterations with a fixed number of basis functions $M$. However, we give an example to show how this may fail in practical applications:
\begin{exmp} \label{exmp::VarianceExplain}
	We consider the case where we wish to optimise some $f(x,\theta)$, and we use the Fourier basis to represent $x^*(\theta)$. We now give a few justifications as to why we should use our Algorithms rather than simply using a high, but constant, number of basis functions. 
	\begin{enumerate}
		\item We set the amount of basis functions too low, and thus miss out on a large amount of the signal, lowering our explanatory power, as we may not know a priori how many basis functions we need.
		\item Suppose we set the number of basis functions very high - from the discussion in Theorem \ref{theorem::IntegralError}, we see that the maximum step size will be smaller in the early iterations than if we performed the UQ method. This means after performing the same number of iterations to approximate the high-frequency coefficients, we have made far less progress on the low-frequency coefficients, as we could have taken advantage of higher step sizes with the UQ method.
		\item Further to the previous point, we see from Theorem \ref{theorem::IntegralError} that taking steps with a larger amount of basis functions will have higher variance from the Monte Carlo approximation of the gradient. In this way, our method also acts as variance reduction.
	\end{enumerate}
	
\end{exmp} 

We know that once we've chosen a sequence $m_k$, that $R_{m_k}$ converges independent of the iterates of the algorithm, so we focus on the first term of Equation \eqref{basicconvone}. We split this term up using the triangle inequality:
\begin{equation*}
||u^k - P_k(u^*)||_2^2 \leq ||u^k - u^*_{m_k}||_2^2 + ||u^*_{m_k} - P_k(u^*)||_2^2 
\end{equation*}
The right-hand side of this equation gives the proof structure: we already know from Lemma \ref{lemma::LinConvRemainder} that the second term converges to zero as $m \rightarrow \infty$, so we seek to combine that with the result that for any $m_k$, the first term converges linearly so long as $m_k$ is fixed.

Once we have these two things, we will be able to show linear convergence once the number of basis functions is high enough. We first show the result for the truncated problem.

\subsection{Fixed Level Analysis} \label{sec::truncGrad}
In this section we first show that the gradient descent method converges at a fixed level. For the remainder of this subsection, we assume an $m$ subscript on all optimal points, for some $m$. Our Theorem will analyse the converge of $u_k$ to the level $m$ optimum:
\begin{theorem} \label{theorem::GDFixedUQ}
	Fix a level of basis functions $m$, and a step size $\gamma$. Let the filtration $\mathcal{F}_k$ be generated by the random variables used to estimate the gradient in the first $k$ steps. Suppose we observe an unbiased gradient $g_k$, in such a way that:
	\begin{equation*}
	\mathbb{E}(g_k^2 | \mathcal{F}_k) \leq C + C_G||\nabla f(x(\theta), \theta) - \nabla f(x_{m_k}^*(\theta), \theta)||_{2}^2
	\end{equation*}
	with $C_G \geq 1, C \geq 0$, and equality attained in the case where there is zero variance in the estimate. Then we have:
	\begin{align*}
	\mathbb{E}\left(||u^{k+1} - u^*||_{2}^2 | \mathcal{F}_k\right) &\leq \left(1 - \frac{2\gamma \mu L}{\mu + L} \right)^k \left(||u^1 - u^*||_{2}^2 - \frac{(\mu+L)\gamma^2 C}{2\gamma \mu L}\right) \nonumber \\
	&+ \frac{(\mu+L)\gamma^2 C}{2\gamma \mu L}
	\end{align*}
	where $\gamma \leq \frac{2}{\left(\mu + L\right)\left(C_G\right)}$, and $u^1 \in l^2_{m}$.
\end{theorem}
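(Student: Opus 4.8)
The plan is to treat this as a textbook noisy-gradient-descent contraction argument, adapted to the $l^2$/$\pi$-norm setting. The engine of the proof is the standard combined strong-convexity-and-smoothness (co-coercivity) inequality
\begin{equation*}
\langle \nabla f(x^k) - \nabla f(x^*_m), x^k - x^*_m\rangle_\pi \geq \frac{\mu L}{\mu + L}\|x^k - x^*_m\|_\pi^2 + \frac{1}{\mu + L}\|\nabla f(x^k) - \nabla f(x^*_m)\|_\pi^2,
\end{equation*}
which holds for $\mu$-strongly convex, $L$-smooth $f$. Since all iterates and the level-$m$ optimum lie in $L^2_{\pi,m}$ and $D_m f(x^*_m) = 0$, the left-hand inner product equals $\langle \bar{g}_k, u^k - u^*\rangle_2$, where $\bar{g}_k := D_m f(x^k)$ is the true truncated gradient (for which the observed $g_k$ is unbiased). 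I would first transcribe this inequality into basis-coordinate form using the $\mathcal{I}$-isomorphism and the norm equivalence $\|\cdot\|_\pi = \|\cdot\|_2$, exactly as is done for the two inequalities used in Lemma \ref{lemma::LinConvRemainder} (whose transcription is deferred to the Appendix).

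Then I would expand a single step. Writing the (descent) update as $u^{k+1} = u^k - \gamma g_k$,
\begin{equation*}
\|u^{k+1} - u^*\|_2^2 = \|u^k - u^*\|_2^2 - 2\gamma \langle g_k, u^k - u^*\rangle_2 + \gamma^2 \|g_k\|_2^2.
\end{equation*}
Taking $\mathbb{E}(\cdot \mid \mathcal{F}_k)$ and using unbiasedness $\mathbb{E}(g_k \mid \mathcal{F}_k) = \bar{g}_k$ in the cross term, I would lower-bound $\langle \bar{g}_k, u^k - u^*\rangle_2$ by the co-coercivity inequality (which, carrying the negative factor $-2\gamma$, yields an upper bound) and upper-bound $\gamma^2\mathbb{E}(\|g_k\|_2^2 \mid \mathcal{F}_k)$ by the hypothesis $\gamma^2 C + \gamma^2 C_G \|\nabla f(x^k) - \nabla f(x^*_m)\|_2^2$. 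The crucial structural point is that the hypothesis is phrased with the gradient \emph{difference} $\nabla f(x^k) - \nabla f(x^*_m)$ (this is exactly why the Corollary rewrote the noise bound in that form): the resulting $+\gamma^2 C_G \|\nabla f(x^k) - \nabla f(x^*_m)\|_2^2$ matches the $-\frac{2\gamma}{\mu+L}\|\nabla f(x^k) - \nabla f(x^*_m)\|_2^2$ coming from co-coercivity, so that the coefficient of $\|\nabla f(x^k) - \nabla f(x^*_m)\|_2^2$ collapses to $\gamma\big(\gamma C_G - \tfrac{2}{\mu+L}\big)$.

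At this point the step-size hypothesis does precisely its job: $\gamma \leq \frac{2}{(\mu+L)C_G}$ makes that coefficient non-positive, so the gradient-difference terms are discarded, leaving the one-step contraction
\begin{equation*}
\mathbb{E}\big(\|u^{k+1} - u^*\|_2^2 \mid \mathcal{F}_k\big) \leq \Big(1 - \tfrac{2\gamma\mu L}{\mu+L}\Big)\|u^k - u^*\|_2^2 + \gamma^2 C.
\end{equation*}
Taking total expectations and unrolling the affine recursion $a_{k+1} \leq \rho a_k + \gamma^2 C$ with $\rho = 1 - \tfrac{2\gamma\mu L}{\mu+L}$ gives $a_{k+1} \leq \rho^k(a_1 - B) + B$, where $B = \frac{\gamma^2 C}{1-\rho} = \frac{(\mu+L)\gamma^2 C}{2\gamma\mu L}$; since $u^1 \in l^2_m$ is deterministic, $a_1 = \|u^1 - u^*\|_2^2$, which is exactly the claimed bound. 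En route I would verify $\rho \in [0,1)$ (guaranteed, since $\gamma \leq \tfrac{2}{(\mu+L)C_G} \leq \tfrac{2}{\mu+L}$ and $\tfrac{4\mu L}{(\mu+L)^2}\leq 1$), so the geometric sum is valid.

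I expect the only genuine obstacle to be the first step: justifying the co-coercivity inequality and the identification of the inner product with $\langle \bar{g}_k, u^k - u^*\rangle_2$ inside this functional-analytic framework, i.e.\ checking that strong convexity and Lipschitzness transfer cleanly through the isomorphism to the truncated coordinate space and that restricting to $L^2_{\pi,m}$ does not spoil the standard inequality (one could even sharpen the rate by using the level-$m$ constants $\mu_m, L_m$). Everything afterward is the routine contraction bookkeeping above; the matching of the gradient-difference terms and the role of the step-size cap are the two conceptual checkpoints.
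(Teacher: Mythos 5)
Your proposal is correct and follows essentially the same route as the paper's proof: expand the squared distance under the update, apply the combined strong-convexity/smoothness (co-coercivity) inequality transferred through the isomorphism (with $D_m f(x^*_m)=0$), use the step-size cap $\gamma \le \tfrac{2}{(\mu+L)C_G}$ to discard the gradient-difference terms, and unroll the resulting affine recursion. If anything, your bookkeeping of the second-moment term is slightly cleaner than the paper's intermediate display, which momentarily writes the co-coercivity lower bound in terms of the noisy gradient rather than the true gradient difference.
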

Note that we also have $u^* \in l^2_m$ and $g^k \in l^2_m$ $\forall k$. 
\begin{proof}
	We first define the quantity $||u^{k+1} - u^*||_{2}^2 = d_{k+1}^2$, written in the $l^2$ norm. As stated earlier, $u^{k+1}$ is still an infinite-dimensional vector, just with zeros above the entry $m_k$, meaning we have:
	\begin{equation*}
	||u^k - u^*||_2^2 = \sum_{i=1}^m(u^k_i - u^*_i)^2
	\end{equation*}
	for all $k$. We write the direction vector as $D f(x)$, suppressing the basis level subscript and $\theta$ argument for clarity, and note this will have entries of zero above the truncation level. Our noisy estimate will be written as $D' f(x)$. From the definition of $u^{k+1} = u^k - \gamma D' f(x)$ we then have:
	\begin{equation*}
	\mathbb{E}\left(d_{k+1}^2 | \mathcal{F}_k\right) =  \mathbb{E}\left(d_k^2 | \mathcal{F}_k\right) - 2\gamma \mathbb{E}\left(\langle D' f(x^k), u^k - u^* \rangle_{2}| \mathcal{F}_k\right) + \gamma^2 \mathbb{E}\left(||D' f(x^k)||_{2}^2| \mathcal{F}_k\right),
	\end{equation*}
	where $x^k(\theta) = \mathcal{I}(u^k)$. We look at the second term of this equation first. We take the expectation and note the first term becomes $D f(x^k)$ while the second term is constant under this expectation, and so we can use the fact that $f$ is strongly convex and the equivalence between $\pi,2$ norms to find:
	\begin{align*}
	\mathbb{E}\left(\langle D' f(x^k), u^k - u^* \rangle_2 | \mathcal{F}_k\right) \\
	&\geq \frac{\mu L}{\mu + L}\mathbb{E}_{\omega'}||u^k - u^*||_2^2 + \frac{1}{\mu + L} \mathbb{E}_{\omega'}||D' f(x^k)||_2^2,
	\end{align*}
	where we also use the fact that $D' f(x^k)$ is an unbiased estimate of $D f(x^k)$. The third term can also be rewritten as:
	\begin{equation*}
	\gamma^2 \mathbb{E}\left(||D' f(x^k)||_2^2 | \mathcal{F}_k\right) \leq  \gamma^2 C + \gamma^2 C_G\mathbb{E}\left(||\nabla f(x(\theta), \theta) - \nabla f(x_{m_k}^*(\theta), \theta)||_{2}^2 | \mathcal{F}_{k-1}\right).
	\end{equation*}
	Substituting these expressions, we find:
	\begin{align*}
	\mathbb{E}(d_{k+1}^2 | \mathcal{F}_k)& \\ 
	&\leq \mathbb{E}(d_k^2 | \mathcal{F}_{k-1}) - 2\gamma\frac{\mu L}{\mu + L} \mathbb{E} (d_k^2 | \mathcal{F}_{k-1}) + \gamma^2C \nonumber \\
	&+\gamma\left(\gamma C_G - \frac{2}{\mu + L}\right)\mathbb{E}\left(||\nabla f(x(\theta), \theta) - \nabla f(x_{m_k}^*(\theta), \theta)||_{2}^2 | \mathcal{F}_{k-1}\right)  \\
	&\leq \left(1 - \frac{2\gamma \mu L}{\mu + L}\right)\mathbb{E}(d_k^2 | \mathcal{F}_{k-1}) + \gamma^2C
	\end{align*}
	where we impose the condition $\gamma - \frac{2}{C_G(\mu+L)} \leq 0$ in the final inequality. We can now rearrange the final inequality to say:
	\begin{equation*}
	\mathbb{E}(d_{k+1}^2 | \mathcal{F}_k) - \frac{(\mu+L)\gamma^2 C}{2\gamma \mu L} \leq \left(1 - \frac{2\gamma \mu L}{\mu + L}\right)\left(\mathbb{E}(d_k^2 | \mathcal{F}_{k-1}) - \frac{(\mu+L)\gamma^2 C}{2\gamma \mu L}\right)
	\end{equation*}
	where we can get the statement of the Theorem by iterating through $k$, rearranging, and using the fact that $F_0$ is empty.
\end{proof}
We can then substitute our expressions for $C, C_G$ from Corollary \ref{corol:Noisemodel} into the result we have just proved. We also assume a subscript of the same $m$ on $Q$, and then get the following corollary:
\begin{corollary}
	In the case of Theorem \ref{theorem::GDFixedUQ}, if we find the basis coefficients of the gradient from Monte Carlo integration and perform the iterations at basis level $m$, we get:
	\begin{align*}
	\mathbb{E}\left(||u^{k+1} - u^*||_2^2 | \mathcal{F}_k\right) &\leq \left(1 - \frac{2\gamma \mu L}{\mu + L} \right)^k \left(||u^1 - u^*||_2^2 - \frac{(\mu+L)\gamma Q\left(2||\nabla f(x^*(\theta), \theta)||_\pi^2 +V_F\right)}{M_k \mu L}\right) \nonumber \\
	&+ \frac{(\mu+L)\gamma Q\left(2V_G||\nabla f(x^*(\theta), \theta)||_\pi^2 +V\right)}{M_k \mu L}
	\end{align*}
	so long as we have $\gamma \leq \frac{2M_k}{(\mu+L)(M_k + 2QV_G)}$. \label{corol:GDrate}
\end{corollary}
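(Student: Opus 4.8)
The plan is to derive this corollary as an immediate specialisation of Theorem~\ref{theorem::GDFixedUQ}. Corollary~\ref{corol:Noisemodel} has already verified that the Monte Carlo gradient estimator of Algorithm~\ref{alg::UQSpecific} satisfies, at a fixed level $m$, the noise model hypothesised in that theorem, with explicit constants
\[
C = \frac{Q}{M_k}\left(2V_G\|\nabla f(x^*_{m}(\theta),\theta)\|_\pi^2 + V\right), \qquad C_G = 1 + \frac{2V_G Q}{M_k},
\]
writing $Q = Q_{m}$ at the fixed level. Thus no fresh analysis is needed: it remains only to insert these two constants into the conclusion of Theorem~\ref{theorem::GDFixedUQ} and simplify.

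First I would confirm the admissible step size. The theorem requires $\gamma \le 2/((\mu+L)C_G)$; writing $C_G = (M_k + 2V_G Q)/M_k$ and inverting gives
\[
\frac{2}{(\mu+L)C_G} = \frac{2M_k}{(\mu+L)(M_k + 2QV_G)},
\]
which is exactly the bound on $\gamma$ stated in the corollary.

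Second I would substitute $C$ into the additive constant $(\mu+L)\gamma^2 C/(2\gamma\mu L)$ that appears, in both occurrences, in the theorem's conclusion. Cancelling one factor of $\gamma$ turns this into $(\mu+L)\gamma C/(2\mu L)$, and inserting the expression for $C$ yields the closed-form constant
\[
\frac{(\mu+L)\gamma Q\left(2V_G\|\nabla f(x^*_{m}(\theta),\theta)\|_\pi^2 + V\right)}{2M_k\mu L}.
\]
Placing this into the geometrically decaying bracket and into the trailing term of Theorem~\ref{theorem::GDFixedUQ}, while retaining the rate $1 - 2\gamma\mu L/(\mu+L)$ unchanged, produces the asserted inequality.

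There is no genuine analytical obstacle here, since both the recursion and the linear rate were established in Theorem~\ref{theorem::GDFixedUQ} and the second-moment bound certifying the noise model was proved in Theorem~\ref{theorem::IntegralError} and its corollary. The only care needed is bookkeeping: performing the $\gamma^2/\gamma$ cancellation consistently in both occurrences of the additive constant, tracking the numerical factor produced by the substitution, and remembering that the gradient norm is evaluated at the fixed level-$m$ optimum $x^*_{m}$ (which is the point $u^*$ of the theorem, since there $u^* \in l^2_m$) rather than at the true optimum $x^*$.
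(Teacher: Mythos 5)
Your proposal is correct and is exactly the route the paper takes: the paper offers no separate argument for this corollary beyond substituting the constants $C$ and $C_G$ from Corollary~\ref{corol:Noisemodel} into Theorem~\ref{theorem::GDFixedUQ} and simplifying the step-size bound. Your bookkeeping is in fact slightly more careful than the stated corollary (your trailing constant carries the factor $2$ in the denominator from the $\gamma^2/(2\gamma)$ cancellation, and you correctly read the gradient norm at the level-$m$ optimum), which only makes the bound tighter than the printed one.
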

We briefly elaborate on the quantity $Q_{m_k}$. We note that it will grow with $m_k$, which implies the maximal step size will change with $m_k$ also. In general while running Algorithm \ref{alg::UQSpecific}, given a series of increasing $m_k$, we will select the maximal step size permitted for each $m_k$.
\subsection{Linear Convergence Phase}
Here we analyse the linearly convergent phase, which occurs when the error of $u^k$ to the optimum is dominated by the $u^k - P_{m_k}(u^*)$ term, and $R_{m_k}$ is small. In this subsection, we show that for this method that once $k$ is high enough, that the quantity $||u^k - u^*||^2$ decreases linearly also.

To do this, we will use the fact that we can select $k$ high enough for some scheme of increasing $m_k$ such that the following is true:
\begin{equation}
\max(||R_{m_k}||_2, || P_{m_k}(x^*) - x^*_k||_2, ||D f(x^*_k)||_2) < \varepsilon, \label{equation::lemPrelim}
\end{equation}
for any $\varepsilon > 0$, which can be done by Lemma \ref{lemma::LinConvRemainder}. Note that for sufficiently large $m_k$, this will also imply that:
\begin{equation}
\sqrt{\frac{(\mu+L)\gamma Q||D f(x^*_k)||_2^2}{M_k \mu L}} < \sqrt{\varepsilon},
\end{equation} 
as we have assumed that $R_{m_k}$ converges to $0$ faster than $Q_{m_k}$ goes to infinity. Then we can state the result as follows:

\begin{theorem} \label{theorem::gradConvLinear}
	Consider Algorithm \ref{alg::UQSpecific}. Suppose that after step $K$, Equation \eqref{equation::lemPrelim} is satisfied for all subsequent steps. Suppose further the gradient is estimated well enough that we can take a constant step size $\gamma$.
	Then we converge linearly to a neighbourhood of the infinite-dimensional solution $u^*$. That is, we have for all $n$:
	\begin{equation}
	\mathbb{E}_{\omega}||u^{K+N} - u^*||_2 \leq \left(1 - \frac{2\gamma \mu L}{\mu + L} \right)^{N/2} \mathbb{E}_{\omega'}||u^{K} - u^*||_2 + \varepsilon'
	\end{equation}
	where $\varepsilon'$ is defined as:
	\begin{equation}
	\frac{2\varepsilon + \sqrt{\varepsilon}}{1 - \left(1-\frac{2\gamma \mu L}{\mu + L}\right)^{1/2}},
	\end{equation}
	and $\omega$ represents the randomness for all $K+N$ steps, and $\omega'$ represents the randomness for the first $K$ steps.
\end{theorem}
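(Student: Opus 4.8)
The plan is to derive a one-step recurrence for the scalar quantity $e_k \defeq \mathbb{E}\,\|u^k - u^*\|_2$ valid for every $k \ge K$, and then unroll it as a geometric series from $k=K$ to $k=K+N$. Writing $r \defeq \left(1 - \tfrac{2\gamma\mu L}{\mu+L}\right)^{1/2}$ for the per-step contraction factor, the target is a recurrence of the form $e_k \le r\,e_{k-1} + (2\varepsilon + \sqrt\varepsilon)$; summing the resulting geometric series $\sum_{j\ge 0} r^j = (1-r)^{-1}$ then produces exactly the stated bound, with leading factor $r^N = (1-\tfrac{2\gamma\mu L}{\mu+L})^{N/2}$ and additive term $\varepsilon' = (2\varepsilon + \sqrt\varepsilon)/(1-r)$.

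To build the one-step recurrence I would proceed in three moves. First, apply the fixed-level result (Theorem \ref{theorem::GDFixedUQ}) at level $m_k$: since $u^{k-1} \in l^2_{m_{k-1}} \subseteq l^2_{m_k}$, a single step gives the conditional bound $\mathbb{E}(\|u^k - u^*_{m_k}\|_2^2 \mid \mathcal{F}_{k-1}) \le r^2\,\|u^{k-1} - u^*_{m_k}\|_2^2 + \gamma^2 C$, where $\gamma^2 C = (1-r^2)\,d_*^2$ and $d_*$ is the steady-state term of that theorem, with $C$ given by Corollary \ref{corol:Noisemodel}. Applying conditional Jensen ($\mathbb{E}(X\mid\mathcal F)\le\sqrt{\mathbb{E}(X^2\mid\mathcal F)}$) and subadditivity of the square root converts this into $\mathbb{E}(\|u^k - u^*_{m_k}\|_2 \mid \mathcal{F}_{k-1}) \le r\,\|u^{k-1} - u^*_{m_k}\|_2 + \gamma\sqrt{C}$. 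Second, I would swap the moving target $u^*_{m_k}$ for the true optimum $u^*$ on both sides by the triangle inequality, at a cost of $\|u^* - u^*_{m_k}\|_2 \le \|R_{m_k}\|_2 + \|P_{m_k}(u^*) - u^*_{m_k}\|_2$; by hypothesis \eqref{equation::lemPrelim} (together with Lemma \ref{lemma::LinConvRemainder}) each summand is below $\varepsilon$, so each swap costs $O(\varepsilon)$. Third, I would bound the noise floor $\gamma\sqrt C$: since $\gamma^2 C = (1-r^2)d_*^2 \le d_*^2$, and the displayed bound immediately following \eqref{equation::lemPrelim} makes $d_* < \sqrt\varepsilon$ for $k\ge K$, we get $\gamma\sqrt C < \sqrt\varepsilon$. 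This last step is exactly where the assumption that $\|R_{m_k}\|_2^2$ decays faster than $Q_{m_k}$ grows, and that the estimation is good enough for a constant $\gamma$, is consumed. Taking full expectations and collecting the three pieces yields the one-step bound $e_k \le r\,e_{k-1} + (2\varepsilon + \sqrt\varepsilon)$, with the target-swaps contributing the $O(\varepsilon)$ term and the stochastic floor the $\sqrt\varepsilon$ term.

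Unrolling from $k=K$ then gives $e_{K+N} \le r^N e_K + (2\varepsilon+\sqrt\varepsilon)\sum_{j=0}^{N-1} r^j \le r^N e_K + \frac{2\varepsilon+\sqrt\varepsilon}{1-r}$, which is the claim once we note that the expectation attached to the $u^K$ term only involves the randomness $\omega'$ of the first $K$ steps, while the full expectation on the left is over $\omega$.

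The main obstacle I anticipate is the bookkeeping around the shifting truncation target $u^*_{m_k}$: because $m_k$ keeps increasing after step $K$, each step contracts toward a \emph{different} point $u^*_{m_k}$, so I must control the change of target between consecutive iterates while simultaneously passing from the squared-norm contraction of Theorem \ref{theorem::GDFixedUQ} to an unsquared, expectation-level recurrence (via Jensen and the subadditivity of $\sqrt{\,\cdot\,}$) without degrading the geometric rate. Keeping the two $\varepsilon$-scale error sources cleanly separated — the deterministic target mismatch, of order $\varepsilon$, versus the stochastic steady-state floor, of order $\sqrt\varepsilon$ and hence dominant — and verifying that the latter is genuinely below $\sqrt\varepsilon$ \emph{uniformly} in $k$ through the growth-rate assumption on $Q_{m_k}$ is the delicate part; tracking the precise constant multiplying $\varepsilon$ may require the mild simplifications used in the paper, and the concluding geometric summation is then routine.
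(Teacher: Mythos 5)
Your proposal is correct and follows essentially the same route as the paper: a one-step unsquared recurrence obtained from the fixed-level contraction (Corollary \ref{corol:GDrate}) via $\sqrt{a+b}\le\sqrt a+\sqrt b$, two triangle-inequality swaps between the moving target $u^*_{m_k}$ and $u^*$ each costing $\varepsilon$ by Equation \eqref{equation::lemPrelim} and Lemma \ref{lemma::LinConvRemainder}, a $\sqrt\varepsilon$ bound on the stochastic floor, and a geometric-series unrolling with ratio $\bigl(1-\tfrac{2\gamma\mu L}{\mu+L}\bigr)^{1/2}$. Your treatment is if anything slightly more explicit than the paper's about the Jensen step and the uniformity in $k$ of the noise-floor bound.
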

\begin{proof}
	First note that in Corollary \ref{corol:GDrate}, we can take an inequality to get rid of the negative constant in the bracket on the right hand side, and then use $\sqrt{a+b} \leq \sqrt{a} + \sqrt{b}$ to show:
	\begin{align*}
	\mathbb{E}\left(||u^{k+1} - u^*||_2\right) &\leq \left(1 - \frac{2\gamma \mu L}{\mu + L} \right)^{1/2} \left(||u^k - u^*||_2\right) + \sqrt{\frac{(\mu+L)\gamma Q||\nabla f(x^*(\theta), \theta)||_2^2}{M_k \mu L}}
	\end{align*}
	
	Next, examining one iteration we have the following:
	\begin{align*}
	&\mathbb{E}\left(||u^{k+1} - u^*||_2\right) \nonumber \\
	&\leq \mathbb{E}||u^{k+1} - u^*_k||_2 + ||u^* - u^*_k||_2  \nonumber \\
	&\leq \left(1-\frac{2\gamma \mu L}{\mu + L}\right)^{1/2}\left(\mathbb{E}||u^{k} - u^*_k||_2\right) + \varepsilon + \sqrt{\varepsilon}  \nonumber \\
	&\leq \left(1-\frac{2\gamma \mu L}{\mu + L}\right)^{1/2}\left(\mathbb{E}||u^{k} - u^*||_2 + ||u^* - u^*_k||_2 \right)  + \varepsilon + \sqrt{\varepsilon} \nonumber \\
	&\leq \left(1-\frac{2\gamma \mu L}{\mu + L}\right)^{1/2}\left(\mathbb{E}||u^{k} - u^*||_2\right) + 2\varepsilon + \sqrt{\varepsilon} \nonumber \\
	\end{align*}
	Here we are also using that the method converges linearly with rate $(1-\frac{2\gamma \mu L}{\mu + L} )$ at a fixed basis, and the triangle inequality. We note that as we take more steps, the $\varepsilon$ terms will give us a geometric series with initial term $2\varepsilon + \sqrt{\varepsilon}$ and ratio $(1-\frac{2\gamma \mu L}{\mu + L} )^{1/2}$. Summing that series to infinity, we get:
	\begin{equation}
	\frac{2\varepsilon + \sqrt{\varepsilon}}{1 - (1-\frac{2\gamma \mu L}{\mu + L} )^{1/2}} = \varepsilon'
	\end{equation}
	Now we get the required convergence on iterating the above set of equations $N$ times.
\end{proof}
It follows from this proof that if we desire $\varepsilon$ convergence, we need to satisfy Equation \eqref{equation::smallthings} for $\varepsilon^*$ such that:
\begin{equation}
\frac{2\varepsilon^* + \sqrt{\varepsilon^*}}{1 - (1-\frac{2\gamma \mu L}{\mu + L} )^{1/2}} = \varepsilon
\end{equation}
Note that this also gives us that the two ``condition numbers'' from Section \ref{sec::ConditionNumbers} are actually linked, with the condition number of the solution being determined by the condition number of the problem, which appears in the denominator of the above Equation. This is another motivating reason for our consideration of the accelerated gradient descent, which we will move onto in the next section.
\section{Accelerated Gradient}
In this section we aim to show that once $m_k$ is large enough, that the robust accelerated gradient algorithm of \cite{aybat2019universally} (run for one stage only) will converge with an accelerated rate. As previously, we will begin by proving that at a fixed level $m_k$, the method converges with the required rate. Again for the remainder of this subsection, we assume that optimal points and quantities $Q$ have the same subscript $m_k$, which we omit for clarity. Throughout this section, we will often refer to the $\pi$-expectation of $f(x(\theta),\theta)$, which we will denote as $M(f(x(\theta),\theta)$, for clarity, or the \textit{mean of $f(x(\theta),\theta)$}.

Here we will use the Lyapunov approach of \cite{aybat2019universally}. We rewrite Nesterov's method as a dynamical system:
\begin{equation*}
\xi_{k+1} = A\xi_k + Bg_k(y_k,w_k), y_k = C\xi_k
\end{equation*}
\begin{algorithm}
	\caption{Uncertainty Quantification for Accelerated Gradient Descent}
	\label{algo_accel}
	\begin{algorithmic}
		\STATE{Input: Function $f(x, \theta)$, two parameters $\alpha, \beta$, a number of iterations $K$, a sequence of truncation points $\{m_k\}$.}
		\STATE{Output: A vector of coefficients $u$.}
		\STATE{For all $j > m_1$, let $u_j^1 = 0$.}
		\STATE{Let $u^{0} = u^{1}$}
		\FOR{$k = 1, \ldots, K$}
		\STATE{For all $j > m_k$, let $u_j^k = 0$.}
		\STATE{Set $y_k = (1+\beta)u_k - \beta u_{k-1}$}
		\STATE{Set $z_k = \mathcal{I}(y_k)$}
		\STATE{Estimate the gradient vector $D_{m_k}(f(z_{k}(\theta),\theta))$ as $D'_{m_k}(f(z_{k}(\theta),\theta))$}
		\STATE{$u^{k+1}_i = u^{k}_i + \alpha_kD'_{m_k} f(y_{k}, \theta)_i$}
		\ENDFOR
		\RETURN $u$
	\end{algorithmic}
\end{algorithm}
where $\xi_k = [u_k^T \quad u_{k-1}^T]^T$ is the state vector, $g_k = [Df(y^k) \quad Df(y^{k-1})]$ with $w_k$ representing the noise, and $A,B,C$ are matrices calculated by Kronecker products of the following matrices with the identity of appropriate size:
\begin{equation*}
\tilde{A} = 
\begin{pmatrix}
1+\beta & -\beta \\
1 & 0 
\end{pmatrix},
\tilde{B} = 
\begin{pmatrix}
-\alpha \\
0 
\end{pmatrix},
\tilde{C} = 
\begin{pmatrix}
1+\beta \\
-\beta
\end{pmatrix}.
\end{equation*}
One substantive difference is that we have the following different noise model:
\begin{equation}
\mathbb{E}\left(Df(x) - D'f(x) | \mathcal{F}_k\right) = 0, \quad \mathbb{E}\left((Df(x) - D'f(x))^2 | \mathcal{F}_k\right) \leq C + C_G||x - x^*||_\pi^2 \label{equation::errVarianceAccel}
\end{equation}
We recall that the constants in the gradient descent case can be found from Theorem \ref{theorem::IntegralError}:
\begin{align}
\mathbb{E}(e_k^2 | \mathcal{F}_k) \leq \frac{2Q_{m_k}}{M_k} \left(V_G||\nabla f(x^*(\theta), \theta)||_\pi^2 +V\right) + \frac{2V_G Q_{m_k}}{M_k} ||\nabla f(x(\theta), \theta) - \nabla f(x^*_{m_k}(\theta), \theta)||_\pi^2 \nonumber
\end{align}
The only necessary change is that we apply the Lipschitz inequality twice to the second term, giving us Equation \eqref{equation::errVarianceAccel} with:
\begin{equation*}
C = \frac{2Q_{m_k}}{M_k} \left(2V_G||\nabla f(x^*(\theta), \theta)||_\pi^2 +V\right), \qquad C_G = 1+ \frac{2L^2 V_G Q_{m_k}}{M_k}
\end{equation*}
Additionally, note that with the matrix $T = [I_d \quad 0_d]$ we can define $u$ through $\xi$ as $T\xi_k = u_k$. We now define the following function, similar to the Lyapunov function used in \cite{aybat2019universally}:
\begin{equation*}
V_P(\xi) = (\xi - \xi^*)^T P(\xi-\xi^*) + h(\xi)
\end{equation*}
where $P$ is some symmetric positive semi-definite matrix, and the function $h$ can be defined as:
\begin{equation*}
h(\xi) = M\left(f(\xi') - f(\xi'^*)\right)
\end{equation*}
where $\xi, \xi'$ and $\xi^*, \xi'^*$ are related through the canonical isomorphism, and we note the expectation is taken with respect to $\pi$. Note that the minimum of $h$ is zero and occurs at $\xi = \xi^*$. 
Our main result is the following:
\begin{theorem} \label{theorem::AccGradLevel}
	Let $f$ be a strongly convex function with $\kappa \geq 4$. Consider the AGD algorithm run under our noise model for a fixed amount of basis functions. Then for $\alpha \in \left(0, \bar{\alpha}\right)$ and $\beta = \frac{1 - \sqrt{\alpha \mu}}{1 + \sqrt{\alpha \mu}}$ with:
	\begin{equation*}
	\bar{\alpha} = \min\left\{\frac{1}{L}, \frac{\mu^3}{(60C_G)^2}\right\}
	\end{equation*}
	we have:
	\begin{equation*}
	\mathbb{E}(V_{Q_\alpha}(\xi_{k+1})| \mathcal{F}_{k+1}) \leq \left(1 - \frac{\sqrt{\alpha \mu}}{3} \right)\mathbb{E}(V_{Q_\alpha}(\xi_{k})| \mathcal{F}_{k}) + 2C\alpha
	\end{equation*}
	where $Q_{\alpha} = \tilde{Q}_{\alpha} \otimes I_d$, $\tilde{Q}_{\alpha} = \tilde{P}_{\alpha} + 2\alpha c^2 \tilde{C}^T \tilde{C}$ which gives us:
	\begin{equation*}
	\tilde{Q}_{\alpha} = \begin{pmatrix}
	\sqrt{\frac{1}{2\alpha}} \\
	\sqrt{\frac{\mu}{2}} - \sqrt{\frac{1}{2\alpha}}
	\end{pmatrix}
	\begin{pmatrix}
	\sqrt{\frac{1}{2\alpha}} & \sqrt{\frac{\mu}{2}} - \sqrt{\frac{1}{2\alpha}}
	\end{pmatrix}
	+2\alpha c^2 
	\begin{pmatrix}
	1+\beta \\
	-\beta
	\end{pmatrix}
	\begin{pmatrix}
	1+\beta & -\beta
	\end{pmatrix}
	\end{equation*}
\end{theorem}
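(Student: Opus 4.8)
The plan is to prove the stated one-step contraction by propagating the Lyapunov function $V_{Q_\alpha}$ through the dynamics $\xi_{k+1} = A\xi_k + Bg_k$, separating the contribution of the zero-mean gradient error from the deterministic descent, and reducing the deterministic part to a small matrix inequality as in \cite{aybat2019universally}. First I would write the iteration as $\xi_{k+1} = A\xi_k + B\bigl(Df(y_k) + e_k\bigr)$ with $y_k = C\xi_k$ and $e_k$ the mean-zero error whose conditional second moment is controlled by \eqref{equation::errVarianceAccel}. Since $A,B,C$ act on the coefficient space only through Kronecker products with the identity, and since the $\pi$-norm equals the $l^2$-norm under the isomorphism $\mathcal{I}$, at a fixed level $m$ every term entering $V_{Q_\alpha}$ behaves exactly as in the finite-dimensional strongly convex problem; the infinite-dimensional setting contributes only bookkeeping.

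The heart of the argument is the deterministic descent. Expanding the quadratic part $(\xi-\xi^*)^T Q_\alpha(\xi-\xi^*)$ and the function-gap part $h(\xi)$ along the update, then taking the conditional expectation so that the cross term linear in $e_k$ vanishes, I would collect everything into a single quadratic form in the stacked vector built from $\xi_k-\xi^*$ and $Df(y_k)$. Strong convexity (Definition \ref{defn::strongConvex}) and Lipschitz smoothness of the gradient supply two quadratic (integral-quadratic-constraint) inequalities coupling $y_k-x^*$ with $Df(y_k)$; adding nonnegative multiples of these to the target inequality $V_{Q_\alpha}(\xi_{k+1}) - (1-\tfrac{\sqrt{\alpha\mu}}{3})V_{Q_\alpha}(\xi_k)\leq 0$ reduces the claim to the negative semidefiniteness of a fixed low-dimensional matrix. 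The explicit $\tilde Q_\alpha = \tilde P_\alpha + 2\alpha c^2\tilde C^T\tilde C$ in the statement is exactly the certificate that renders this matrix negative semidefinite for the Nesterov momentum $\beta = (1-\sqrt{\alpha\mu})/(1+\sqrt{\alpha\mu})$ whenever $\alpha\leq 1/L$; I would verify feasibility through the rank-one factorisation and a Schur-complement check rather than by expanding determinants, which is where the constant $3$ in the rate is pinned down.

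The remaining piece is the noise, and this is where I expect the main obstacle. The bound \eqref{equation::errVarianceAccel} adds $\mathbb{E}\|Be_k\|^2 \leq \|\tilde B\|^2\bigl(C + C_G\|y_k-x^*\|_\pi^2\bigr)$, together with a smoothness-induced contribution to $h(\xi_{k+1})$ of the same order. The additive $C$ part yields the stated $2C\alpha$ once the $\alpha$-scaling of $Q_\alpha$ is accounted for. The multiplicative term $C_G\|y_k-x^*\|_\pi^2$ is the delicate one: being proportional to the squared distance to the optimum, it directly opposes the contraction and must be dominated by the negative margin produced by strong convexity. This is precisely what forces the second branch $\alpha\leq \mu^3/(60C_G)^2$ of the bound on $\bar\alpha$ — one needs the contraction margin, of order $\sqrt{\alpha\mu}$, to outweigh the $\alpha C_G$ perturbation of the weighted quadratic form, and propagating this comparison through the Kronecker lift and the $Q_\alpha$-weighting is what produces the explicit constant. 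Once this term is absorbed into the deterministic margin, combining the contraction with the residual additive noise gives the claimed inequality.
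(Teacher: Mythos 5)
Your proposal follows essentially the same route as the paper: both propagate the Lyapunov function through the dynamical-system form of Nesterov's method, certify the deterministic contraction via the matrix inequality of Lemma \ref{lem::AppendixLemmaAcc} (the IQC argument from \cite{aybat2019universally}), convert the additive noise into the $2C\alpha$ term, and handle the state-dependent noise $C_G\|y_k-x^*\|_\pi^2$ by augmenting the certificate with $2\alpha c^2\tilde{C}^T\tilde{C}$ and dominating it with the contraction margin, which is what forces the $\mu^3/(60C_G)^2$ branch of $\bar\alpha$. The only slight imprecision is attributing the $2\alpha c^2\tilde{C}^T\tilde{C}$ correction to the deterministic feasibility check rather than to the absorption of the multiplicative noise, but you correct this reading in your final paragraph, so the argument is sound and matches the paper's.
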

To prove this result, we will require the following Lemma, proved in the Appendix:
\begin{lemma} \label{lem::AppendixLemmaAcc}
	Under all our usual hypotheses, and that there exists $\rho \in (0,1)$ and $\tilde{P} \in \mathbb{S}_+^2$, possibly depending on $\rho$ such that:
	\begin{equation*}
	\rho^2\tilde{X_1} + (1-\rho^2)\tilde{X_2} \succeq \begin{pmatrix}
	A^TPA - \rho^2 P & A^TPB \\
	B^TPA & B^TPB 
	\end{pmatrix}
	\end{equation*}
	where:
	\begin{equation*}
	\tilde{X_1} = 
	\frac12 \begin{pmatrix}
	\beta^2 \mu & -\beta^2 \mu & -\beta \\
	-\beta^2 \mu & \beta^2 \mu & \beta \\
	-\beta & \beta & \alpha(2 - L\alpha)
	\end{pmatrix}
	\end{equation*}
	and:
	\begin{equation*}
	\tilde{X_2} = 
	\frac12 \begin{pmatrix}
	(1+\beta)^2 \mu & -\beta(1+\beta) \mu & -(1+\beta) \\
	-\beta(1+\beta) \mu & \beta^2 \mu & \beta \\
	-(1+\beta) & \beta & \alpha(2 - L\alpha)
	\end{pmatrix}.
	\end{equation*}
	Then let $P = \tilde{P} \otimes I_d$. We have for all $k \geq 0$:
	\begin{equation*}
	\mathbb{E}(V_P(\xi_{k+1})| \mathcal{F}_{k+1}) \leq \rho^2\mathbb{E}(V_P(\xi_{k})| \mathcal{F}_{k} ) + \alpha^2\left(C + C_G\mathbb{E}\left(||y^k - x^*||_\pi^2| \mathcal{F}_{k} \right) \right)\left(\frac{L}{2} + \tilde{P}_{11}\right)
	\end{equation*}
\end{lemma}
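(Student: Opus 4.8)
The plan is to follow the Lyapunov dissipativity argument of \cite{aybat2019universally}, transported to our noisy, infinite-dimensional setting through the isomorphism $\mathcal{I}$ and the norm equivalence $\|x\|_\pi = \|u\|_2$. I would start from $V_P(\xi_{k+1}) = (\xi_{k+1} - \xi^*)^T P(\xi_{k+1} - \xi^*) + h(\xi_{k+1})$ and split the analysis into the quadratic part and the function-gap part $h$. Since $\xi^* = [u^{*T}\ u^{*T}]^T$ is a fixed point of the dynamics (because $Df(x^*) = 0$ and $\tilde{A}\xi^* = \xi^*$), I can write $\xi_{k+1} - \xi^* = A(\xi_k - \xi^*) + B g'_k$, where $g'_k = D'f(y_k)$ is the noisy gradient. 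Writing $g'_k = g_k + e_k$ with $g_k = Df(y_k)$ and $e_k = D'f(y_k) - Df(y_k)$, I expand the quadratic form and take $\mathbb{E}(\cdot \mid \mathcal{F}_{k+1})$. The terms linear in $e_k$ vanish by unbiasedness (the mean-zero condition in \eqref{equation::errVarianceAccel}), and the remaining term $\mathbb{E}(e_k^T B^T P B\, e_k \mid \mathcal{F}_k)$ is controlled by \eqref{equation::errVarianceAccel}; because only the top block of $B$ is nonzero with entry $-\alpha$, one has $B^T P B = \alpha^2 \tilde{P}_{11}\otimes I_d$, which contributes $\alpha^2 \tilde{P}_{11}\big(C + C_G\,\mathbb{E}(\|y_k - x^*\|_\pi^2 \mid \mathcal{F}_k)\big)$.

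Next I treat $h(\xi_{k+1})$. Since $\nabla f$ is Lipschitz in the $\pi$-norm, a descent-type inequality (taken in the $\pi$-mean $M(\cdot)$) bounds the mean function gap at the true iterate $u_{k+1}$ by its value at the noiseless iterate plus a second-order term. The noise again enters quadratically: the true iterate differs from the noiseless one by $-\alpha e_k$, so the smoothness constant produces an extra $\tfrac{L}{2}\alpha^2\big(C + C_G\,\mathbb{E}(\|y_k - x^*\|_\pi^2 \mid \mathcal{F}_k)\big)$ contribution after taking conditional expectation. Adding the two noise contributions explains the coefficient $\big(\tfrac{L}{2} + \tilde{P}_{11}\big)$ in the statement. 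Writing $\zeta_k = [(\xi_k - \xi^*)^T\ g_k^T]^T$ and $\bar{h}_{k+1}$ for the noiseless part of $h(\xi_{k+1})$, what remains after isolating the noise is the purely deterministic claim
\[
\zeta_k^T
\begin{pmatrix}
A^T P A - \rho^2 P & A^T P B \\
B^T P A & B^T P B
\end{pmatrix}
\zeta_k
+ \big(\bar{h}_{k+1} - \rho^2 h(\xi_k)\big) \le 0 .
\]

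The crux is to dispose of the function-gap surplus $\bar{h}_{k+1} - \rho^2 h(\xi_k)$ and the quadratic form simultaneously. Here I would invoke the two pointwise interpolation inequalities for $\mu$-strongly convex, $L$-smooth functions used in \cite{aybat2019universally}, relating the extrapolated point $y_k$ to the optimum $x^*$ and to the iterate $u_k$. Taken in the $\pi$-mean, which is legitimate because strong convexity (Definition \ref{defn::strongConvex}) and Lipschitzness hold in the $\pi$-norm, each inequality asserts that a scalar quadratic form in the triple $(u_k - u^*,\, u_{k-1} - u^*,\, g_k)$ with coefficient matrix $\tilde{X_1}$, respectively $\tilde{X_2}$, dominates the corresponding function-gap terms. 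Forming the $\rho^2$--$(1-\rho^2)$ combination converts the gap surplus into a quadratic form, so the deterministic left-hand side is bounded above by $\zeta_k^T\big(\text{(the matrix above)} - \rho^2\tilde{X_1}\otimes I_d - (1-\rho^2)\tilde{X_2}\otimes I_d\big)\zeta_k$; the hypothesised matrix inequality $\rho^2\tilde{X_1} + (1-\rho^2)\tilde{X_2} \succeq (\cdot)$, lifted by $\otimes I_d$, forces this to be nonpositive, and the lemma follows. I expect the main obstacle to be exactly this last step: stating the two mean-$\pi$ interpolation inequalities with the correct weights, verifying that the scalar $3\times 3$ inequality lifts to the operator inequality under $\otimes I_d$ and the norm equivalence, and bookkeeping the $h$ terms so that the convex combination matches the hypothesis cleanly.
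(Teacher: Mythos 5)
Your proposal follows essentially the same route as the paper's proof: the paper likewise splits $V_P$ into the quadratic part $W_P$ and the gap $h$, isolates the noise via unbiasedness and the second-moment bound in Equation \eqref{equation::errVarianceAccel} to obtain the $\alpha^2\tilde{P}_{11}$ and $\tfrac{L\alpha^2}{2}$ contributions (stated as two preliminary lemmas adapted from \cite{aybat2019universally} and \cite{aybat2020robust}), applies the interpolation inequalities in the $\pi$-mean to produce $\tilde{X_1},\tilde{X_2}$, and closes with the hypothesised matrix inequality. Your sketch is correct and matches the paper's structure, including the origin of the $\left(\tfrac{L}{2}+\tilde{P}_{11}\right)$ coefficient.
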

With this result, we can now prove our main Theorem:
\begin{proof}[Theorem \ref{theorem::AccGradLevel}]
	Our proof will broadly follow Theorem $K.1$ from \cite{aybat2019universally} with $\sigma = 0$. From the previous Lemma we can show, using $\alpha L \leq 1$ and the definition of $\tilde{P}$:
	\begin{equation*}
	\mathbb{E}(V_{P_\alpha}(\xi_{k+1})| \mathcal{F}_{k+1}) \leq \left(1 - \sqrt{\alpha \mu}\right)\mathbb{E}(V_{P_\alpha}(\xi_{k})| \mathcal{F}_{k}) + \alpha\left(C + C_G\mathbb{E}\left(||y^k - u^*||_2^2| \mathcal{F}_{k}\right) \right)
	\end{equation*}
	By construction we have $y^k = C\xi^k$ so we can write $||y^k - u^*||_2^2 = (\xi^k-\xi^*)^T C^T C (\xi^k - \xi^*)$. Using that, we can rewrite the previous Equation as:
	\begin{align*}
	\mathbb{E}(V_{P_\alpha}(\xi_{k+1})| \mathcal{F}_{k+1}) &\leq \left(1 - \sqrt{\alpha \mu}\right)\mathbb{E}(V_{P_\alpha}(\xi_{k})| \mathcal{F}_{k} ) \nonumber \\
	&+ \frac12 \mathbb{E}\left((\xi^k-\xi^*)^T 2\alpha c^2 C^T C (\xi^k - \xi^*)| \mathcal{F}_{k+1} \right) + \alpha C \nonumber \\
	&\leq \left(1 - \sqrt{\alpha \mu}\right)\mathbb{E}(V_{Q_\alpha}(\xi_{k})| \mathcal{F}_{k}) + \alpha C
	\end{align*}
	where the final inequality follows from $1 - \sqrt{\alpha \mu} \geq \frac12$ which is true by the hypotheses on $\alpha, \kappa$. We can also bound $(\xi^k-\xi^*)^T C^T C (\xi^k - \xi^*)$ like in \cite{aybat2019universally}, where we take expectations in the third inequality (Equation $(58)$). The remainder follows as in that reference.
\end{proof}
\begin{corollary} \label{corol:UQAGDFixed}
	In the set up of Theorem \ref{theorem::AccGradLevel}, we have:
	\begin{align*}
	\mathbb{E}\left(||u^{k+1} - u^*||_2^2| \mathcal{F}_{k+1} \right)
	&\leq \left(1 - \frac{\sqrt{\alpha \mu}}{3}\right)^k \left( \frac{4\kappa^2(\mu + 2\alpha C_G)}{\mu + 4\alpha C_G + L}\right) ||u^1 - u^*||_2^2 \nonumber \\
	&+ \frac{2\alpha C}{\mu^2 + 4\alpha C_G \mu + 2L}
	\end{align*}
\end{corollary}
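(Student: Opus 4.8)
The plan is to convert the one-step Lyapunov contraction of Theorem \ref{theorem::AccGradLevel} into the stated bound on $\mathbb{E}(||u^{k+1}-u^*||_2^2)$ in three moves: unroll the recursion into a closed geometric form, then sandwich the Lyapunov quadratic form $(\xi-\xi^*)^T Q_\alpha(\xi-\xi^*)$ between constant multiples of $||u-u^*||_2^2$ at both the final and the initial iterate. The bridge between the two quantities is the relation $T\xi_k = u_k$ with $T = [I_d \ 0_d]$, so that $||u_k - u^*||_2^2 = (\xi_k-\xi^*)^T T^T T (\xi_k - \xi^*)$, together with the fact that $h(\xi) = M(f(\xi')-f(\xi'^*)) \geq 0$ with minimum $0$ at $\xi^*$.

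First I would iterate
\begin{equation*}
\mathbb{E}(V_{Q_\alpha}(\xi_{k+1})\mid\mathcal{F}_{k+1}) \leq \Big(1-\tfrac{\sqrt{\alpha\mu}}{3}\Big)\mathbb{E}(V_{Q_\alpha}(\xi_{k})\mid\mathcal{F}_{k}) + 2C\alpha
\end{equation*}
from $\xi_1$ forward. Summing the geometric series with ratio $1-\sqrt{\alpha\mu}/3$ and using $\sum_{j\geq 0}(1-\sqrt{\alpha\mu}/3)^j = 3/\sqrt{\alpha\mu}$ gives
\begin{equation*}
\mathbb{E}(V_{Q_\alpha}(\xi_{k+1})) \leq \Big(1-\tfrac{\sqrt{\alpha\mu}}{3}\Big)^{k} V_{Q_\alpha}(\xi_1) + \frac{6C\alpha}{\sqrt{\alpha\mu}},
\end{equation*}
which already isolates the geometric factor $(1-\sqrt{\alpha\mu}/3)^k$ and a constant residual.

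It remains to replace $V_{Q_\alpha}(\xi_{k+1})$ by $||u^{k+1}-u^*||_2^2$ on the left and $V_{Q_\alpha}(\xi_1)$ by $||u^1-u^*||_2^2$ on the right. For the left side, since $h\geq 0$ we have $V_{Q_\alpha}(\xi_{k+1}) \geq (\xi_{k+1}-\xi^*)^T Q_\alpha(\xi_{k+1}-\xi^*)$, and I would produce a constant $\lambda>0$ with $Q_\alpha \succeq \lambda\, T^T T$, giving $V_{Q_\alpha}(\xi_{k+1}) \geq \lambda\, ||u^{k+1}-u^*||_2^2$. For the right side, the algorithm initialises $u^0 = u^1$, so $\xi_1 - \xi^* = [(u^1-u^*)^T \ (u^1-u^*)^T]^T$ and the quadratic part collapses to $(\mathbf{1}^T\tilde{Q}_\alpha\mathbf{1})\,||u^1-u^*||_2^2$, while $L$-smoothness bounds the function-gap part by $h(\xi_1)\leq \tfrac{L}{2}||u^1-u^*||_2^2$. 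Dividing the unrolled bound through by $\lambda$ then gives a bound of the stated form: a geometric factor $(1-\sqrt{\alpha\mu}/3)^k$ times a constant multiple of $||u^1-u^*||_2^2$, plus a constant residual.

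The main obstacle is the pair of spectral estimates that produce the explicit constants in the statement: a lower bound on $\lambda$ in $Q_\alpha \succeq \lambda T^T T$ and an upper evaluation of $\mathbf{1}^T\tilde{Q}_\alpha\mathbf{1}+L/2$. Both require substituting $\beta = (1-\sqrt{\alpha\mu})/(1+\sqrt{\alpha\mu})$ into the rank-one-plus-rank-one structure $\tilde{Q}_\alpha = \tilde{P}_\alpha + 2\alpha c^2 \tilde{C}^T\tilde{C}$ and simplifying the resulting $2\times 2$ quadratic forms; this is where the factor $\kappa^2$ and the $\alpha C_G$ terms enter, and it parallels the corresponding eigenvalue computation in \cite{aybat2019universally} (its Equation $(58)$ and the surrounding bounds). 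Everything else reduces to the geometric-sum and triangle-type bookkeeping above.
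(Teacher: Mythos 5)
Your proposal is correct and follows essentially the same route as the paper, whose entire proof of this corollary is the single line ``Follows from expanding the definition of $V$'': you unroll the contraction of Theorem \ref{theorem::AccGradLevel}, lower-bound $V_{Q_\alpha}(\xi_{k+1})$ by a multiple of $\|u^{k+1}-u^*\|_2^2$ using $h\geq 0$ and $T\xi_k=u_k$, and upper-bound $V_{Q_\alpha}(\xi_1)$ via $u^0=u^1$ and $L$-smoothness, exactly the expansion the paper intends. Like the paper, you stop short of the explicit $2\times 2$ spectral computations that produce the stated constants, but the plan is a faithful (indeed more detailed) elaboration of the paper's argument.
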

\begin{proof}
	Follows from expanding the definition of $V$.
\end{proof}
We note that the theoretical bounds on the step size from Theorem \ref{theorem::AccGradLevel} are concerning: for example, if we wanted to ensure that we always had $\alpha_k = 1/L$, we would need:
\begin{equation*}
\frac{\mu^3}{(60C_G)^2} \geq 1/L.
\end{equation*}
By substituting our expression for $C_G$ and rearranging for the number of Monte Carlo samples required, we find that:
\begin{equation}
M_k = Q_k \times \Omega(\kappa^{3/2}),
\end{equation}
however we later see the in numerical experiments that the method outperforms these practical bounds, and still greatly outperforms gradient descent using the same number of samples. Additionally, we note that the constant in front of $||u^1 - u^*||_2^2$ could be very large when $\kappa$ is large, which could impact convergence. In the case where $C_G = 0$, the constant becomes:
\begin{equation}
\left( \frac{4L^2}{\mu(\mu + L)}\right)
\end{equation}
which for a highly ill-conditioned problem is $\mathcal{O}(\kappa)$. With similar logic to \cite{van2017fastest}, we note this adds an extra $\ln (\kappa) \sqrt{\kappa}$ iterations to convergence, but similarly we consider just high accuracy convergence, and so neglect this term as it will be comparatively small in the total number of iterations. This also shows that despite the unfavourable constant, we really do attain acceleration as required.

\subsection{Linear convergence phase}
In this section, we show that eventually we get accelerated convergence to an $\varepsilon$-neighbourhood of the optimiser once $k$ is high enough. We note two things before we start,
\begin{enumerate}
	\item Note that Lemma \ref{lemma::LinConvRemainder} did not rely on the fact that we were using the gradient descent method, only that the function was strongly convex. Hence, it applies here also.
	\item In general, accelerated methods are not necessarily monotone. Because of that, we cannot use the method from the gradient descent case as one iteration cannot show a decrease necessarily.
\end{enumerate}
To get around the latter point, we require the following additional observation on $\nabla f$:
\begin{lemma} \label{assumption}
	Let $D_{m} f(x(\theta), \theta)$ be the vector of the first $m$ coefficients of the gradient of $f$ with respect to $x$, as defined in Definition \ref{definition::basisGrad}.
	
	Then for sufficiently large $M$, we have that for all $m > M$, for all $\varepsilon > 0$ we assume there exists a $N > 0$ such that for all $n > N$ and for all $x(\theta) \in L^2_{\pi,m}$, we have:
	\begin{equation}
	||D_n(f(x(\theta), \theta) - D(f(x(\theta),\theta))||_2 < \varepsilon
	\end{equation}
	As this is similar to uniform convergence but with a restricted domain, we refer to it as \textit{restricted uniform convergence}.
\end{lemma}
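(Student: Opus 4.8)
The plan is to recognize the quantity $\|D_n(f(x(\theta),\theta)) - D(f(x(\theta),\theta))\|_2$ as a truncation error and then upgrade pointwise convergence to uniform convergence over the relevant domain by a compactness argument. First I would observe that, by Definition \ref{definition::basisGrad}, $D_n f(x) = P_n\bigl(D f(x)\bigr)$ is simply the vector of the first $n$ basis coefficients of the full gradient vector $D f(x) = \mathcal{I}(\nabla f(x(\cdot),\cdot))$, so the difference is exactly the tail
\begin{equation*}
D_n f(x) - D f(x) = -(\mathrm{Id} - P_n)\,D f(x) = -R_n\bigl(D f(x)\bigr).
\end{equation*}
Since $\nabla f$ is Lipschitz continuous, $\nabla f(x(\cdot),\cdot) \in L^2_\pi$ and hence $D f(x) \in l^2$, so for each fixed $x$ this tail tends to $0$ as $n \to \infty$ by completeness of the basis. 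The entire content of the lemma is that this convergence is \emph{uniform} over $x \in L^2_{\pi,m}$.

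Second I would exploit the fact that the domain $L^2_{\pi,m}$ is finite-dimensional. On any bounded subset $\{x \in L^2_{\pi,m} : \|x\|_\pi \le B\}$, which is a closed ball in a finite-dimensional space and therefore compact, the map $x \mapsto D f(x)$ is Lipschitz with constant $L$ (by the Lipschitz assumption together with the isometry $\|\cdot\|_\pi = \|\cdot\|_2$ induced by $\mathcal{I}$), so its image $S_B = \{D f(x) : \|x\|_\pi \le B\}$ is a compact subset of $l^2$. Third, I would invoke the standard fact that the tail projections $R_n$ converge to $0$ \emph{uniformly} on compact subsets of $l^2$: given $\varepsilon$, cover $S_B$ by finitely many $l^2$-balls of radius $\varepsilon/2$ about points $w_1,\dots,w_J$, choose $N$ so large that $\|R_n w_j\|_2 < \varepsilon/2$ for every $j$ and every $n > N$, and conclude $\|R_n w\|_2 \le \|R_n(w - w_j)\|_2 + \|R_n w_j\|_2 < \varepsilon$ for all $w \in S_B$, using $\|R_n\| \le 1$ and the triangle inequality. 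Taking this $N$ yields the claimed restricted uniform convergence on the ball.

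The main obstacle is the unboundedness of $L^2_{\pi,m}$: because $\|\nabla f(x)\|_\pi$ grows linearly in $\|x\|_\pi$, the image $\{D f(x) : x \in L^2_{\pi,m}\}$ is unbounded and hence \emph{not} compact, so the tails cannot be controlled uniformly over the whole space by this argument alone. This is precisely why the statement is framed as an assumption — \emph{restricted} uniform convergence — rather than an unconditional conclusion. In practice the gap is closed by noting that the iterates of Algorithm \ref{algo_accel} stay in a bounded region, so only a fixed ball of radius $B$ must be considered and the compactness argument applies verbatim. The threshold ``for sufficiently large $M$'' together with the restriction $m > M$ serves only to confine attention to the tail levels relevant to the algorithm; once $x$ is restricted to a fixed ball, the bound is uniform in $m$ as well, since $D f(x)$ lies in $l^2$ regardless of how many coefficients of $x$ are active.
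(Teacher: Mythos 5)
Your argument is sound but takes a genuinely different route from the paper, and it proves a slightly weaker statement. You identify $D_n f(x) - Df(x)$ as the tail $R_n(Df(x))$ and upgrade pointwise decay of tails to uniform decay via compactness: the gradient map is Lipschitz, so the image of a closed ball in the finite-dimensional space $L^2_{\pi,m}$ is compact in $l^2$, and tail projections vanish uniformly on compact sets by a finite $\varepsilon/2$-net argument. The paper instead reduces to the case where $\nabla f$ is a polynomial of degree $D$ in $x$ (Lemma \ref{lemma::UniCont}): for $x \in L^2_{\pi,m}$ the composed gradient is a linear combination of the finitely many products $B_{i_1}^{r_1}\cdots B_{i_p}^{r_p}$ with indices at most $m$ and total degree at most $D$, each of which has its own truncation index, and $N$ is taken to be the maximum over this finite family; the general case is then claimed by polynomial approximation of $\nabla f$. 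The two approaches buy different things. Yours is more elementary and basis-independent, but — as you candidly note — it cannot cover the unbounded domain $L^2_{\pi,m}$ appearing in the statement, since the image $\{Df(x): x \in L^2_{\pi,m}\}$ is unbounded and hence not compact; you close the gap only by appealing to boundedness of the algorithm's iterates, which is not part of the lemma's hypotheses. The paper's argument is, at least in spirit, uniform over the whole unbounded subspace: for polynomial or trigonometric bases the products $B_{i_1}^{r_1}\cdots B_{i_p}^{r_p}$ have \emph{finite} basis expansions, so beyond $N = \max I_{i,r}$ the tail of each building block is exactly zero and the unbounded coefficients $u_{i_1}\cdots u_{i_p}$ multiplying them do no harm. (If the products' tails were merely small rather than eventually zero, the paper's proof would suffer from exactly the same unboundedness problem you identify, since the remainders are weighted by arbitrarily large products of coefficients — so your caveat is a real one that the paper sidesteps only by implicit reliance on this algebraic closure property of the basis.) If you want your route to yield the lemma as stated, you must either add a boundedness restriction on $x$ to the statement, as you suggest, or supply the same structural input the paper uses.
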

To prove this Lemma, we need the following result:
\begin{lemma} \label{lemma::UniCont}
	Lemma \ref{assumption} holds in the case where $\nabla f$ is a polynomial.
\end{lemma}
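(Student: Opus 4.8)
The plan is to exploit the fact that in the polynomial case the expansion of $\nabla f(x(\theta),\theta)$ in the orthonormal basis $\{B_i\}$ actually \emph{terminates}, so that the convergence $D_n \to D$ is not merely uniform but exact once $n$ exceeds a threshold that can be chosen uniformly over the whole subspace $L^2_{\pi,m}$. This turns the restricted-uniform-convergence statement into an essentially trivial equality for large enough $n$.

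First I would fix $m$ and recall that for $x(\theta) \in L^2_{\pi,m}$ we have $x(\theta) = \sum_{i \le m} u_i B_i(\theta)$ with the $B_i$ orthonormal polynomials. Since each $B_i$ is a polynomial in $\theta$, the map $\theta \mapsto x(\theta)$ is a polynomial of total degree at most $d_m \defeq \max_{i\le m} \deg B_i$, and crucially this degree bound is independent of the coefficients $u_i$. Writing $\nabla f(s,\theta)$ as a polynomial of degree $p$ in its first argument $s$ and degree $q$ in $\theta$, the substitution $s = x(\theta)$ yields a polynomial in $\theta$ of total degree at most $D^* \defeq p\, d_m + q$. The essential point is that $D^*$ depends only on $m$ and on $f$, \emph{not} on the particular $x \in L^2_{\pi,m}$; every element of $L^2_{\pi,m}$ produces a gradient that, viewed as a function of $\theta$, is a polynomial of degree bounded by the single number $D^*$.

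Next I would convert this uniform degree bound into a uniform bound on the support of the coefficient vector $D f(x(\theta),\theta)$, whose entries are $[D f(x(\theta),\theta)]_i = \langle \nabla f(x(\cdot),\cdot), B_i\rangle_\pi$. By the defining orthogonality of orthogonal polynomials, any $B_i$ of total degree strictly greater than $D^*$ is orthogonal to every polynomial of degree at most $D^*$, so $[D f(x(\theta),\theta)]_i = 0$ for all such $i$. Because there are only finitely many basis elements of total degree $\le D^*$ (exactly $D^*+1$ in the univariate case, and at most $\binom{D^*+d}{d}$ under a graded ordering in $d$ variables), there is a finite index $N^* = N^*(m)$ with $[D f(x(\theta),\theta)]_i = 0$ for all $i > N^*$ and all $x \in L^2_{\pi,m}$. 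Since $D_n$ merely truncates $D$ to its first $n$ entries, for every $n \ge N^*$ and every $x \in L^2_{\pi,m}$ we get $D_n(f(x(\theta),\theta)) = D(f(x(\theta),\theta))$ exactly, whence $\|D_n - D\|_2 = 0 < \varepsilon$. Taking $N = N^*$, which is independent of $\varepsilon$, establishes Lemma \ref{assumption} for any threshold $M$; for vector-valued $\nabla f$ one runs the argument componentwise and takes the maximum of the finitely many thresholds.

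The hard part is not analytic but combinatorial: the one place requiring care is the multivariate setting, where I must fix a graded ordering of the $\{B_i\}$ and invoke the standard structural fact that $d$-variate orthogonal polynomials of total degree exceeding $D^*$ annihilate all polynomials of degree $\le D^*$. This is exactly what guarantees a \emph{finite, coefficient-independent} support index $N^*$, and hence uniformity over the unbounded set $L^2_{\pi,m}$; without the graded structure the terminating-expansion argument would not close.
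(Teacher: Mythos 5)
Your proof is correct and rests on the same core observation as the paper's: for $x \in L^2_{\pi,m}$ the composition $\nabla f(x(\theta),\theta)$ is a polynomial in $\theta$ whose degree is bounded in terms of $m$ and $\deg \nabla f$ only, so only finitely many basis coefficients can matter. The execution differs in a way worth noting. The paper expands the composition over the finite family of monomials $B_{i_1}^{r_1}\cdots B_{i_p}^{r_p}$ with $i_j \le m$ and $\sum r_j = D$, observes that each such monomial has a tail remainder smaller than $\varepsilon$ beyond some index $I_{i,r}$, and sets $N = \max_{i,r} I_{i,r}$. You instead bound the total degree by $D^* = p\,d_m + q$ and invoke orthogonality of the (graded) polynomial basis to conclude that every coefficient $[Df]_i$ with $\deg B_i > D^*$ vanishes \emph{exactly}, so $D_n = D$ for all $n \ge N^*$. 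Your version is the tighter one, and it actually matters for the uniformity claim: the monomials in the paper's decomposition appear with coefficients that are polynomials in the $u_i$, which range over an unbounded set as $x$ varies in $L^2_{\pi,m}$, so a per-monomial tail bound of $\varepsilon$ does not by itself yield a bound uniform in $x$ — whereas exact vanishing of the tails does, and is what is really being used. The one hypothesis you need to make explicit (and do) is that the basis is graded, so that "index beyond $N^*$" coincides with "degree beyond $D^*$"; this is implicit in the paper as well.
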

Lemma \ref{assumption} then follows from this result, as the gradient can be approximated to arbitrary precision with polynomials due to its smoothness. We prove the result in the case that $d = 1$ for clarity, but it can easily be extended:
\begin{proof}[Proof of Lemma \ref{lemma::UniCont}]
	Let the degree of $\nabla f$ be $D$, and let our basis be $B_i(\theta)$. In the statement of Lemma \ref{assumption}, fix a value of $M$ and consider some $m > M$. Then consider the set of functions:
	\begin{equation} \label{equation::irUniCont}
	B_{i_1}^{r_1}(\theta)B_{i_2}^{r_2}(\theta) \ldots B_{i_p}^{r_p}(\theta)
	\end{equation}
	varying over all $0 \leq i_1,\ldots,i_p \leq m$, and $r_1+\ldots+r_p = D$ with $r_j > 0$. Each of these functions can itself be represented in the basis $B_i(\theta)$, and thus has an index $I_{i,r}$ such that the remainder at that index is smaller than $\varepsilon$, where $(i,r)$ are the indices from Equation \eqref{equation::irUniCont}.
	
	As the set of possible $(i,r)$ from Equation \eqref{equation::irUniCont} is finite, we can define the largest $I_{i,r}$ over all possible $(i,r)$. The Lemma now holds with $N = \max_{i,r} I_{i,r}$.
\end{proof}
Lemma \ref{assumption} also thus gives us the following:
\begin{corollary}
	For sufficiently large $M$, we have that for all $m > M$, for all $\varepsilon > 0$ we assume there exists a $N > 0$ such that for all $n > N$ and for all $x(\theta) \in L^2_{\pi,m}$, we have:
	\begin{equation}
	||\left(\text{Id} - P_{n}\right)D(f(x(\theta), \theta)||_2 < \varepsilon
	\end{equation}
\end{corollary}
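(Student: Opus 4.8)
The plan is to recognize that the corollary is, up to a trivial operator identity, a verbatim restatement of Lemma \ref{assumption}. The crucial observation is the relationship between the truncated descent operator $D_n$, the full descent operator $D$, and the projection $P_n$. From Definition \ref{definition::basisGrad}, the $i$th entry of $D_n(f(x(\theta),\theta))$ is precisely $\int_{\theta} \nabla f(x(\theta),\theta) B_i(\theta) \pi(d\theta)$ for $i \leq n$ and zero for $i > n$, while $D(f(x(\theta),\theta))$ has these same integrals as \emph{all} of its entries. Since $P_n$ acts on $l^2$ by retaining the first $n$ coordinates and zeroing the rest, this means $D_n(f(x(\theta),\theta)) = P_n\bigl(D(f(x(\theta),\theta))\bigr)$. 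So I would first state and justify this identity as the single structural fact driving the proof.

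Given that identity, the quantity appearing in the corollary is literally the gradient truncation error. I would write $\bigl(\mathrm{Id} - P_n\bigr)D(f(x(\theta),\theta)) = D(f(x(\theta),\theta)) - P_n D(f(x(\theta),\theta)) = D(f(x(\theta),\theta)) - D_n(f(x(\theta),\theta))$, and then take $l^2$-norms on both sides to obtain $\|(\mathrm{Id} - P_n)D(f(x(\theta),\theta))\|_2 = \|D_n(f(x(\theta),\theta)) - D(f(x(\theta),\theta))\|_2$. The right-hand side is exactly the quantity controlled by Lemma \ref{assumption}, so I would invoke that lemma directly, inheriting the \emph{same} $M$ and $N$: for sufficiently large $M$, for all $m > M$ and all $\varepsilon > 0$, the chosen $N$ guarantees the bound below $\varepsilon$ uniformly over $x(\theta) \in L^2_{\pi,m}$ for every $n > N$.

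I do not anticipate any genuine obstacle here, since the argument is a one-step reduction: the entire content is the coordinate identification $D_n = P_n \circ D$, after which the estimate is inherited. The only point requiring the slightest care is making explicit that $P_n$ is being used in its $l^2 \to l^2_n$ incarnation (as flagged in the definition of the level $m$ projection) rather than its $L^2_\pi \to L^2_{\pi,m}$ form, so that the equation $D_n = P_n D$ typechecks at the level of basis-coefficient sequences. Once that is noted, Lemma \ref{assumption} closes the argument immediately.
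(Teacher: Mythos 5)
Your proposal is correct and matches the paper's intent exactly: the paper states this corollary as an immediate consequence of Lemma \ref{assumption} with no further argument, and the identity $D_n = P_n \circ D$ (read off from Definition \ref{definition::basisGrad}) is precisely the one-step identification that makes $\|(\mathrm{Id}-P_n)D(f(x(\theta),\theta))\|_2 = \|D_n(f(x(\theta),\theta)) - D(f(x(\theta),\theta))\|_2$, after which the lemma applies verbatim. Your version is in fact slightly more explicit than the paper, which is a small improvement rather than a divergence.
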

Our high-level approach is to use the fact that when $k$ is sufficiently high (say $k = k_1$, and so $\max(||R_{m_k}||_2, || P_{m_k}(x^*) - x^*_k||_2, ||D f(x^*_k)||_2) < \varepsilon$), then steps taken with more than $M > m_{k_1}$ basis functions are very similar to those taken with just $m_{k_1}$ basis functions. By setting $k$ high enough for $\varepsilon$ to be sufficiently small, we can take an arbitrarily large amount of steps, and still end up close to where we would be with steps taken only at the level $m_{k_1}$. We will show that eventually when we get within a neighbourhood of the optimum that we can take enough steps to guarantee $\varepsilon$-neighbourhood convergence of our optimum. Our convergence result can be stated as follows:
\begin{theorem} \label{theorem::AccGradOverall}
	Suppose the gradient is estimated in such a manner that allows us to take steps of constant size $\alpha$. Suppose that the iteration number $K$ is high enough so the following holds:
	\begin{equation}
	\max(||R_{m_K}||_2, || P_{m_K}(x^*) - x^*_K||_2, ||D f(x^*_K)||_2, 2\alpha C) < \varepsilon. \label{equation::smallthings}
	\end{equation}
	Then we have linear convergence at a rate $\left(1 - \frac{\sqrt{\alpha \mu}}{3}\right)^{1/2}$ to an $\varepsilon$-neighbourhood of the infinite-dimensional solution $u^*$. That is, for some sufficiently large $K$, we have for all $N$:
	\begin{equation}
	\mathbb{E}\left(||u^{K+N} - u^*||_2 | \mathcal{F}_{K+N} \right) \leq\left(1 - \frac{\sqrt{\alpha \mu}}{3}\right)^{n/2} \sqrt{\left(4\kappa^2 \frac{\mu + 2\alpha C_G}{\mu + 4\alpha C_G + L}\right)} \left(\mathbb{E}||u^K - u^*||_2 | \mathcal{F}_{K}\right) +\varepsilon,
	\end{equation}
\end{theorem}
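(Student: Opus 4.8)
The plan is to mirror the structure of the gradient descent linear convergence result (Theorem \ref{theorem::gradConvLinear}), but to replace its single-step monotonicity argument — which is unavailable for accelerated methods, as noted in the second remark preceding Lemma \ref{assumption} — with a comparison between the true increasing-level trajectory and an idealized trajectory run at the fixed level $m_K$. First I would take the fixed-level bound of Corollary \ref{corol:UQAGDFixed}, interpreting its $u^*$ as the level-$m_K$ optimum $u^*_{m_K}$ (per the subscript convention of Section \ref{sec::truncGrad}). Applying $\sqrt{a+b}\leq\sqrt{a}+\sqrt{b}$ and using the hypothesis $2\alpha C < \varepsilon$ from Equation \eqref{equation::smallthings} to absorb the additive term, this yields, for a hypothetical run that keeps the truncation level fixed at $m_K$,
\[
\mathbb{E}\left(||u^{K+N}-u^*_{m_K}||_2 \,\middle|\, \mathcal{F}_{K+N}\right) \leq \left(1-\frac{\sqrt{\alpha\mu}}{3}\right)^{N/2}\sqrt{4\kappa^2\frac{\mu+2\alpha C_G}{\mu+4\alpha C_G+L}}\; \mathbb{E}\left(||u^K-u^*_{m_K}||_2 \,\middle|\, \mathcal{F}_K\right) + \mathcal{O}(\varepsilon).
\]

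Second I would invoke restricted uniform convergence (Lemma \ref{assumption}, via its Corollary) to argue that the true algorithm — which uses the increasing levels $m_{K+n}>m_K$ — produces gradient steps that differ from those of the fixed-$m_K$ run by at most $\varepsilon$ per step whenever the iterate lies near $L^2_{\pi,m_K}$. Because the Lyapunov function $V_{Q_\alpha}$ contracts at rate $1-\sqrt{\alpha\mu}/3$ (Theorem \ref{theorem::AccGradLevel}), these per-step discrepancies are damped geometrically rather than accumulating, so the true trajectory stays within an $\mathcal{O}(\varepsilon)$-tube of the idealized fixed-level trajectory for all $N$. Finally, the triangle inequality together with the bound $||u^*-u^*_{m_K}||_2 \leq \sqrt{\kappa+1}\,||R_{m_K}||_2 < \sqrt{\kappa+1}\,\varepsilon$ (from the remark following Lemma \ref{lemma::LinConvRemainder} and Equation \eqref{equation::smallthings}) converts convergence toward $u^*_{m_K}$ into convergence toward $u^*$. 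Collecting all the $\varepsilon$-order contributions produces a geometric series in the contraction factor $(1-\sqrt{\alpha\mu}/3)^{1/2}$ whose sum is absorbed into the final additive $\varepsilon$; choosing $K$ large enough — equivalently, $\varepsilon$ small enough, exactly as in the passage following Theorem \ref{theorem::gradConvLinear} — makes this total at most $\varepsilon$.

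The hard part will be the second step: controlling the discrepancy between the genuine increasing-level run and the idealized fixed-level run over an arbitrarily large number $N$ of iterations. Unlike gradient descent, the accelerated iteration carries momentum, so a naive estimate would let the small per-step gradient errors compound through the matrix $A$ and potentially grow. The argument must therefore lean on the contraction of $V_{Q_\alpha}$ established in Theorem \ref{theorem::AccGradLevel} to guarantee that the error injected at each step by the level mismatch is forgotten geometrically, which is precisely what makes the accumulated error a convergent geometric series rather than a divergent one.
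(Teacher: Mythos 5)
Your overall architecture matches the paper's: compare the true increasing-level trajectory to an auxiliary run frozen at level $m_K$, apply the fixed-level rate of Corollary \ref{corol:UQAGDFixed} to the auxiliary run, and close with the triangle inequality and Lemma \ref{lemma::LinConvRemainder}. However, your resolution of the step you correctly identify as the hard part --- bounding the gap between the two trajectories over all $N$ --- does not work as stated. You claim the per-step discrepancies are ``damped geometrically'' because $V_{Q_\alpha}$ contracts. But Theorem \ref{theorem::AccGradLevel} establishes contraction of the Lyapunov function \emph{toward the fixed point of a single dynamics}; it says nothing about incremental stability, i.e.\ about the distance between two distinct trajectories driven by different truncated gradient operators. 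The difference $d_n = \mathbb{E}\|u^{K+n}-v^{K+n}\|$ obeys a momentum-coupled recursion through the matrix $A$ with Lipschitz-gradient perturbations, and the paper derives exactly this recursion: $d_{K+n+2} \leq L\gamma(1+\beta)d_{K+n+1} + (L\gamma\beta+1)d_{K+n} + \gamma\varepsilon$. The coefficients on the right sum to more than one, so the homogeneous part is \emph{not} contractive and the discrepancy can grow with $n$; its solution is $h(\varepsilon,N)=C_1r_1^N+C_2r_2^{N-1}+C_3r_3^{N-2}$ with roots that need not lie inside the unit disc. Asserting an $\mathcal{O}(\varepsilon)$-tube ``for all $N$'' therefore has no support, and it quietly reinstates the per-iteration geometric-series summation that the paper explicitly rules out for accelerated methods.

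The paper's actual fix is blockwise rather than uniform-in-$N$: fix a window length $N$ just large enough that $\rho^{N/2}\Gamma < 0.9$ (a quantity independent of the iterate), note that over this \emph{fixed} window the discrepancy is bounded by $h(\varepsilon,N)$, which tends to $0$ as $\varepsilon\to 0$ for fixed $N$ even though it may grow in $N$, obtain a strict $0.9$ contraction plus $\mathcal{O}(\varepsilon)$ per block, and then restart the comparison with a fresh frozen level at the end of each block (using that $R_{m_k}$ is monotonically decreasing, so each successive block tolerates at least as many iterations). To repair your argument you would need either to carry out this block decomposition or to prove a genuine incremental contraction property of the accelerated iteration map, which the paper's Lyapunov analysis does not provide.
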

Our idea will be to show that when Equation \eqref{equation::smallthings} holds, then steps taken look very similar, regardless of how many basis functions we are currently using. Therefore the steps we take in our UQ method will be very similar to steps taken if we could have an infinite number of basis functions. Specifically, we aim to prove the following Lemma:
\begin{lemma}
	Let $u^k$ be the iterates obtained from the UQ method. Suppose after $K$ iterations that Equation \eqref{equation::smallthings} holds, and define the sequence $v^k$ as follows. Firstly, if $0 \leq k \leq K$, then $v^k = u^k$. After iteration $K$, subsequent values of $v^k$ are obtained by fixing the amount of basis functions at $m_K$ and performing iterations at that fixed level.
	
	Then there exists a function $h$ such that:
	\begin{equation}
	\mathbb{E}(u^{K+N}-v^{K+N} | \mathcal{F}_{K+N}) \leq h(\varepsilon, N) \label{equation::hfunNearIterate}
	\end{equation}
\end{lemma}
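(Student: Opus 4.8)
The plan is to set up and iterate a difference recursion for $\delta_k \defeq \mathbb{E}\,\|u^k - v^k\|_2$ (interpreting the left-hand side of \eqref{equation::hfunNearIterate} as this expected norm), coupling the two runs so that they draw the same Monte Carlo samples in their common coordinates; then the zero-mean, conditionally-unbiased part of each gradient estimate vanishes in expectation by \eqref{equation::errVarianceAccel}, and it suffices to track the deterministic update directions. By construction $v^k = u^k$ for $k \le K$, so $\delta_k = 0$ there and the recursion only needs to be propagated for $k > K$.

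For $k > K$ the two accelerated updates are $u^{k+1} = u^k + \alpha D'_{m_k} f(y_k^u)$ with $y_k^u = (1+\beta)u^k - \beta u^{k-1}$, and $v^{k+1} = v^k + \alpha D'_{m_K} f(y_k^v)$ with $y_k^v = (1+\beta)v^k - \beta v^{k-1}$, where $v^k, y_k^v \in l^2_{m_K}$ for all $k$. Subtracting and taking expectations gives
\begin{equation*}
\mathbb{E}(u^{k+1}-v^{k+1}) = \mathbb{E}(u^k - v^k) + \alpha\,\mathbb{E}\big(D_{m_k} f(y_k^u) - D_{m_K} f(y_k^v)\big).
\end{equation*}
I would split the gradient discrepancy through the intermediate quantity $D_{m_k} f(y_k^v)$,
\begin{equation*}
D_{m_k} f(y_k^u) - D_{m_K} f(y_k^v) = \underbrace{\big(D_{m_k} f(y_k^u) - D_{m_k} f(y_k^v)\big)}_{\text{propagation}} + \underbrace{\big(D_{m_k} f(y_k^v) - D_{m_K} f(y_k^v)\big)}_{\text{truncation}}.
\end{equation*}
The propagation part is bounded by $L\|y_k^u - y_k^v\|_2 \le L\big((1+\beta)\delta_k + \beta\delta_{k-1}\big)$ using the Lipschitz constant and the momentum definition, so it feeds back into the recursion as a second-order linear term.

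The crux is the truncation part, which equals $P_{(m_K,m_k]} D f(y_k^v)$, the band of gradient coordinates between $m_K$ and $m_k$ evaluated at the frozen point $y_k^v \in l^2_{m_K}$. Since $v^k \to x^*_K = x^*_{m_K}$ geometrically by Corollary \ref{corol:UQAGDFixed}, I would triangulate through $x^*_K$: write $P_{(m_K,m_k]} D f(y_k^v) = P_{(m_K,m_k]}\big(D f(y_k^v) - D f(x^*_K)\big) + P_{(m_K,m_k]} D f(x^*_K)$. The first summand is at most $L\|y_k^v - x^*_K\|_2$, a quantity that decays geometrically along the frozen run; the second is at most $\|(\mathrm{Id}-P_{m_K}) D f(x^*_K)\|_2 = \|D f(x^*_K)\|_2 < \varepsilon$, since $D f(x^*_K)$ has vanishing first $m_K$ coordinates and \eqref{equation::smallthings} holds. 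For the genuinely non-negligible intermediate band one invokes the restricted uniform convergence of $(\mathrm{Id}-P_n)D f$ on the fixed subspace $l^2_{m_K}$ (the Corollary to Lemma \ref{assumption}) to bound its high-index portion uniformly by $\varepsilon$. Thus the per-step forcing is at most $\alpha\big(\varepsilon + L\|y_k^v - x^*_K\|_2\big)$.

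Finally I would assemble the second-order linear recursion $\delta_{k+1} \le \big(1 + \alpha L(1+\beta)\big)\delta_k + \alpha L\beta\,\delta_{k-1} + \alpha\big(\varepsilon + L\|y_k^v - x^*_K\|_2\big)$ with $\delta_K = \delta_{K-1}=0$, and solve it by comparison with its homogeneous solution (discrete Gronwall), summing the geometrically-decaying frozen-distance contributions. This yields an explicit $h(\varepsilon,N)$ of the form (an $N$-dependent amplification factor)$\times$($\varepsilon$ plus a geometrically small multiple of the starting distance $\|u^K-x^*_K\|_2$), matching the structure of the leading term and the $\varepsilon$-neighbourhood in Theorem \ref{theorem::AccGradOverall}. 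The main obstacle is precisely the third step: the frozen subspace $l^2_{m_K}$ discards gradient coordinates in $(m_K,m_k]$ that are \emph{not} small merely because $\|R_{m_K}\|_2$ is small — for a degree-$D$ nonlinear gradient these persist up to index $\sim D\,m_K$ — so restricted uniform convergence alone cannot kill the band. One must additionally exploit that the frozen iterates approach $x^*_K$, where even this band is controlled by $\|D f(x^*_K)\|_2 < \varepsilon$, and carefully track how the Gronwall amplification interacts with $N$ so that $h(\varepsilon,N)\to 0$ as $\varepsilon\to 0$ for each fixed $N$.
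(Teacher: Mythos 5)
Your plan follows the paper's proof almost step for step: the paper also defines $d_i=\mathbb{E}(\|u^i-v^i\|\,|\,\mathcal{F}_i)$, bounds $\|y^u_k-y^v_k\|$ by $(1+\beta)d_k+\beta d_{k-1}$, splits the gradient discrepancy into a Lipschitz ``propagation'' term and a truncation term, obtains the same second-order linear recurrence with a small forcing term, and solves it via the characteristic polynomial (the paper first differences the inhomogeneous recurrence to make it homogeneous of third order, which is equivalent to your discrete-Gronwall summation). The one place you genuinely diverge is the truncation band $D_{m_k}f(y^v_k)-D_{m_K}f(y^v_k)$: the paper simply bounds it by $\varepsilon$ by invoking the restricted uniform convergence of Lemma \ref{assumption} at the point $y^v_k\in L^2_{\pi,m_K}$, with no further argument, whereas you argue (correctly, given that the $N$ produced by Lemma \ref{lemma::UniCont} scales like $D\cdot m_K$ and so exceeds $m_K$ itself) that this lemma alone does not control the coordinates in $(m_K, N]$, and you repair this by triangulating through $x^*_{m_K}$, using $\|Df(x^*_{m_K})\|_2<\varepsilon$ from \eqref{equation::smallthings} together with Lipschitz continuity and the geometric decay of the frozen run. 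This is a more careful treatment of the paper's weakest step, but it comes at a price you only partially acknowledge: your forcing term now contains $L\|y^v_k-x^*_{m_K}\|_2$, so the resulting $h$ depends on the starting distance $\|u^K-x^*_{m_K}\|_2$ and does \emph{not} tend to $0$ as $\varepsilon\to 0$ for fixed $N$ — yet exactly that property is what Theorem \ref{theorem::AccGradOverall} needs when it solves $\varepsilon'+h(\varepsilon',m)=\varepsilon$. To close your version you would have to either take $N$ large enough that the geometric tail is itself $O(\varepsilon)$ (entangling $N$ with $\varepsilon$), or find another way to make the band $\varepsilon$-small uniformly along the frozen run; the paper avoids this by asserting the stronger uniform bound outright.
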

\begin{proof}
	Over the course of this proof, we will refer to $u^{K+i}, v^{K+i}$ as $u^i, v^i$ for clarity. Similarly, we refer to the level $m_K$ optimum as $v^*$, as it is what $v^i$ is converging to.
	
	Recall that we can use Corollary \ref{corol:UQAGDFixed} to get the following result on $v^i$:
	\begin{equation*}
	\mathbb{E}\left( ||v^{i+1} - v^*||_2 | \mathcal{F}_{K+N} \right) \leq \left(1 - \frac{\sqrt{\alpha \mu}}{3}\right)^{n/2} \sqrt{\left(4\kappa^2 \frac{\mu + 2\alpha C_G}{\mu + 4\alpha C_G + L}\right)} \left(\mathbb{E}\left(||u^K - u^*||_2| \mathcal{F}_{K}\right)\right) +\varepsilon
	\end{equation*}
	where we have used the hypothesis of the Lemma to make the final constant term small.
	
	To find the function $h(\varepsilon,N)$, we will take an inductive approach. So consider the pairs $(u^{K+n+1}, u^{K+n}), (v^{K+n+1}, v^{K+n})$, and let $d_i = \mathbb{E}\left(||u^i-v^i|| | \mathcal{F}_i\right)$. Knowing the values for $d$, we can then see:
	\begin{equation*}
	\mathbb{E}(||y^{K+n+1}_u - y^{K+n+1}_v|| | \mathcal{F}_{K+n+1}) \leq (1+\beta)d_{K+n+1} + \beta d_{K+n}
	\end{equation*}
	using the triangle inequality, where we have let $y_u$, $y_v$ be the corresponding value for $y$ in the algorithms generating $u^i$, $v^i$. From here we can also say:
	\begin{align*}
	&\mathbb{E}\left(|| D_{m_{K+n+1}} f(y^{K+n+1}_u) - D_{m_K} f(y^{K+n+1}_v) ||_2| \mathcal{F}_{K+n+1}\right) \nonumber \\
	&= \mathbb{E}\left(|| D_{m_{K+n+1}} f(y^{K+n+1}_u) - D_{m_{K+n+1}} f(y^{K+n+1}_v) + D_{m_{K+n+1}} f(y^{K+n+1}_v) - D_{m_K} f(y^{K+n+1}_u)||_2| \mathcal{F}_{K+n+1}\right) \nonumber \\ 
	&\leq L\left((1+\beta)d_{K+n+1} + \beta d_{K+n}\right) + \varepsilon
	\end{align*}
	where we have used Lipschitz continuity in comparing the first two quantities, and restricted uniform continuity for the others. Knowing this, we can now give a recurrence relation for $d$:
	\begin{equation*}
	d_{K+n+2} \leq L\gamma(1+\beta)d_{K+n+1} + \left(L\gamma\beta + 1\right) d_{K+n} + \gamma \varepsilon.
	\end{equation*}
	By setting $n = n+1$ and then subtracting from the Equation above, we can get a homogeneous relation:
	\begin{equation*}
	d_{K+n+3} \leq \left(L\gamma(1+\beta) + 1\right)d_{K+n+2} + \left(1 -L\gamma \right)d_{K+n+1} - \left(L\gamma\beta + 1\right)d_{K+n}.
	\end{equation*}
	By solving the characteristic polynomial, we can then say:
	\begin{equation*}
	d_{K+n} \leq C_1 r_1^{n} + C_2 r_2^{n-1} + C_3 r_3^{n-2}
	\end{equation*}
	where $r_1, r_2, r_3$ are roots to the polynomial:
	\begin{equation*}
	x^3 - \left(L\gamma(1+\beta) + 1\right) x^2 - \left(1 -L\gamma \right)x + \left(1 -L\gamma \right) = 0,
	\end{equation*}
	and we have the following initial conditions which determine $C_1, C_2, C_3$:
	\begin{equation*}
	d_0 = 0, \qquad d_1 = L\gamma\varepsilon, \qquad d_2 = L\gamma\varepsilon\left(1 + L\gamma(1+\beta)\right) + \gamma\varepsilon.
	\end{equation*}
	By induction, if we take a fixed number of iterations $N > 2$ after reaching the $K$th iteration, we find $\mathbb{E}||u^{K+N} - v^{K+N}||_2 \leq C_1 r_1^{N} + C_2 r_2^{N-1} + C_3 r_3^{N-2}$. This shows that the Lemma holds with $h(\varepsilon, N) = C_1 r_1^{N} + C_2 r_2^{N-1} + C_3 r_3^{N-2}$.
\end{proof}

\begin{proof}[Proof of Theorem \ref{theorem::AccGradOverall}]
	The issue with directly adapting the gradient descent proof, as outlined earlier, is that we do not know how many iterations we will need to ensure convergence. We recall that though our method is not monotonic in general, we can ensure a decrease by taking $N$ to be such that:
	\begin{equation*}
	\left(1 - \frac{\sqrt{\alpha \mu}}{3}\right)^{N/2} \sqrt{\left(4\kappa^2 \frac{\mu + 2\alpha C_G}{\mu + 4\alpha C_G + L}\right)} < 1
	\end{equation*}
	If we let $\left(1 - \frac{\sqrt{\alpha \mu}}{3}\right) = \rho$ and $\sqrt{\left(4\kappa^2 \frac{\mu + 2\alpha C_G}{\mu + 4\alpha C_G + L}\right)} = \Gamma$ the following is sufficient:
	\begin{equation*}
	\rho^m \Gamma < 0.9  \implies N > \frac{\ln(\Gamma) - \ln(0.9)}{\ln(\rho)}
	\end{equation*}
	where $0.9$ can be replaced by any number strictly less than $1$. 
	We note that this quantity does not depend on the initial point, and so for all initial conditions, we can ensure that after $k_1$ basis functions are reached, that after $N > 2$ iterations we have:
	\begin{equation*}
	||u^{K+N} - v^{K+N}||_2 \leq h(\varepsilon, N) = C_1 r_1^{N} + C_2 r_2^{N-1} + C_3 r_3^{N-2}
	\end{equation*}
	with $N > \frac{\ln(\kappa) - \ln(0.9)}{2\ln(\rho)}$. Then, because we have $||v^{K+N} - y^*||_2 < 0.9||v^{K} - y^*||_2$, we see that:
	\begin{align*}
	\mathbb{E}\left(||u^{K+N} - P(x^*)||_2 | \mathcal{F}_{K+N}\right) &\leq \mathbb{E}\left(||u^{K+N} - v^{K+N}||_2 | \mathcal{F}_{K+N}\right) \nonumber \\
	&+ \mathbb{E}\left(||v^{K+N} - P(x^*)||_2 | \mathcal{F}_{K+N} \right) \\
	&\leq h(\varepsilon, N) + \mathbb{E}\left(||v^{K+N} - v^*||_2 | \mathcal{F}_{K+N} \right) \\
	&< 0.9\mathbb{E}\left(||v^{K} - v^*||_2 | \mathcal{F}_{K}\right) + \varepsilon + h(\varepsilon, m) \\
	&< 0.9\mathbb{E}\left(||u^{K} - P(x^*)||_2 + | \mathcal{F}_{K}\right) + 2\varepsilon + h(\varepsilon, m)
	\end{align*}
	as $u^K = v^K$. Additionally, as the function $2\varepsilon + h(\varepsilon,m)$ is monotonic decreasing in $\varepsilon$, and converges to $0$ as $\varepsilon \rightarrow 0$, we see we can set $\varepsilon = \varepsilon'$, where $\varepsilon' + h(\varepsilon',m) = \varepsilon$. We now let $g(\varepsilon)$ Doing that, we get the following:
	\begin{equation}
	\mathbb{E}\left(||u^{K+N} - P(x^*)||_2 | \mathcal{F}_{K+N}\right) < 0.9\mathbb{E}\left(||u^{K} - P(x^*)||_2 | \mathcal{F}_{K} \right) + \varepsilon
	\end{equation}
	Note that after performing these $N$ iterations, we can just do this again, and decrease the distance to the optimum again. We also note that $R_{m_k}$ is monotonically decreasing, and thus the maximum amount of iterations we can perform while ensuring closeness (and thus, ensuring a decrease) is monotonically increasing.
	
	Note that because of this, there comes a point where $R_{m_k}$ is small enough such that we can take sufficient iterations after the $K'$th iteration to obtain:
	\begin{equation*}
	\rho^{N'}\sqrt{\kappa} < \frac{\varepsilon_1}{||u^{K'} - u^*||_2}
	\end{equation*}
	and still have $\mathbb{E}_{\omega}||u^{K'+N'} - v^{K'+N'}||_2 < \varepsilon$. From the Equations above, we see that once this point has reached that $\mathbb{E}_{\omega}||u^K - u^*||_2$ will converge linearly to an $\varepsilon$ neighbourhood of the optimum.
\end{proof}

\section{Experiments}
In this section we evaluate the experimental performance of our method. We will verify the performance for gradient descent and the acceleration, along with the effects of noise. We will choose our optimum to be the same function as selected in \cite{crepey2020uncertainty}:
\begin{equation}
x^*(\theta) = |(4/5 + 1/4 \exp(\sin(\theta)) - \cosh(\sin(\theta)^2)|(1+\sin(2\theta))
\end{equation}
which was selected for its relatively slow decay of the basis coefficients $u_i$. The first function we choose to minimise is a simple quadratic:
\begin{equation}
f(x(\theta),y(\theta), \theta) = \mu(x(\theta) - x^*(\theta))^2 + L(y(\theta) -x^*(\theta))^2
\end{equation}
for varying $\mu, L$. We note that our first model has no dependence on $v$. We recall the definitions of $\mu_m, L_m, \kappa_,$ in Section \ref{sec::ConditionNumbers}, and note that here we have $\mu_m = \mu, L_m = L, \kappa_m = \kappa = \frac{L}{\mu}$ $\forall m$. 

As outlined in Section \ref{section::AlgoIntro}, we will consider the basis decomposition of $x, y$ over some orthonormal basis. For our purposes, we will choose the trigonometric basis. Note that in this case we have $Q_{m_k} = \mathcal{O}(m_k)$. Additionally, the distribution $\pi(\theta)$ will be the uniform distribution over $[-\pi, \pi]$. The measure of error is the $2$-norm difference between the vector of basis coefficients $||u_k - P_ku^*||_2^2$, where $k$ is the current number of basis functions. We also have that $u^*$ was obtained through a benchmark high-accuracy integration procedure.

Before we present our results, we recall the statement of Theorem \ref{theorem::IntegralError}. In particular, we were concerned about the relative speed of convergence of $Q_{m_k}, R_{m_k}$. But here, as stated we have $Q_{m_k} = \mathcal{O}(m_k)$, but also have $R_{m_k} = \mathcal{O}(m_k^{-2})$ \cite{canuto2006spectral}. This means that our error terms will decay to zero as $m_k \rightarrow \infty$, as required.


\begin{figure}
	\centering
	\begin{subfigure}{0.45\textwidth}
		\includegraphics[width=\textwidth]{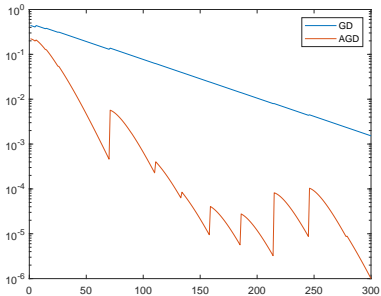}
		\caption{Exact gradient evaluations.}
		\label{fig::MultiLevelNoNoise}
	\end{subfigure}
	\hfill
	\begin{subfigure}{0.45\textwidth}
		\includegraphics[width=\textwidth]{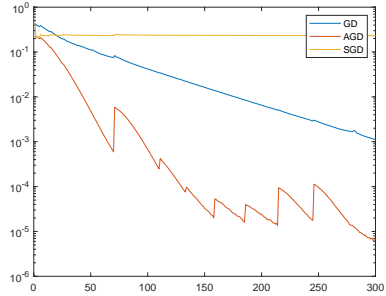}
		\caption{Inexact gradient evaluations via Monte Carlo.}
		\label{fig::NesterovAveraged}
	\end{subfigure}
	\caption{Illustrating convergence with a varying amount of basis functions for gradient descent and accelerated gradient descent, with exact and inexact gradient evaluations. A comparison to Stochastic Gradient Descent is also included in the inexact case.}
	\label{fig:figures}
\end{figure}	
For demonstration purposes, we first show Figure \ref{fig::MultiLevelNoNoise}, where we give an exact computation of $D_{m_k}$ to the algorithm. Here the Lipschitz constant has been set to $200$, we have $m_k = \sqrt{k+10}+2$, all initial conditions set to the zero vector, and we run for $300$ iterations. For the gradient descent case, optimal no noise step sizes were used, namely $\frac{2}{\mu + L}$ for the gradient descent case, and $\alpha = 1/L, \beta = \frac{1 - \sqrt{\alpha \mu}}{1 + \sqrt{\alpha \mu}}$ for the accelerated case.

In the accelerated gradient descent graph, we see large spikes in the error at iterations that correspond to a new basis function being added, which is to be expected as $u$ is still initialised to zero for that coefficient. It can be verified that the magnitude of each spike is approximately the same for both graphs, which is to be expected as the same coefficient is being added to the error measurement.


We now provide a noisy graph of both algorithms in Figure \ref{fig::NesterovAveraged}, with all parameters apart from the step size. For the gradient descent, we use the optimal $\frac{2}{(\mu+L)(1+C_G)}$. For the accelerated case, we use the same step sizes due to the previously mentioned problem about the theoretical bounds. The only other difference is instead of using the easily obtained analytic expression for the gradient, we perform a Monte Carlo simulation instead, using $500$ samples, and averaging over $200$ tests. We also include a third line for comparison, which is the Stochastic Approximation method used in \cite{crepey2020uncertainty}. Here we used step sizes that began at $1/100$ to prevent initial blow-up, and decayed like $1/k$, which is in their range of comparison.

We see qualitatively similar results, apart from the fact that noise impacts the accelerated graph at lower errors. This is due to the fact that the function we have to reach to obtain $\varepsilon$-convergence is larger in the accelerated gradient case. We also remark that if we removed the factor of $1/(1+C_G)$ in the gradient descent algorithm, blow-up was observed, but our accelerated algorithm still converges, suggesting the analysis is not tight in terms of step sizes.

The second function we choose to minimise is the same quadratic, but with an added noise term
\begin{equation}
F(x(\theta),y(\theta), \theta,v) = \mu(x(\theta) - x^*(\theta))^2 + L(y(\theta) -x^*(\theta))^2 + v\left(x(\theta) + y(\theta)\right),
\end{equation}
where $v \sim \mathcal{U}[-1,1]$. Note firstly that the expected value of $F$ over $v$ is just the previous quadratic $f$. Similarly, as the gradient of $F$ is just $\nabla f + v$, we can deduce that $V = 1/3, V_G = 1$.

In Figure \ref{fig::VarConvergence} we now show the convergence of our method on this function for gradient descent, where use optimal step sizes. Note that this is the same as for the previous function, as $V$ only affects the degree to which we will converge, and $V_G = 1$. We note the convergence is the same as in the case with no $v$, and similar results are observed for accelerated gradient descent.

\subsection{Impact of basis functions on noise}
We recall that in Section \ref{sec::GD} we discussed the possibility of just setting $m_k$ to be very high but constant. We gave some intuitive reasons why it would perform worse than our method, but now give an explicit demonstration.

In Figure \ref{fig::UQJustification}, we performed $600$ iterations of the same quadratic used in this section with $\mu = 1, L = 200$. Noting that we can converge no closer to the optimum than the truncation error, we set $m_k = 91$ for the fixed level algorithm. This is the largest amount of basis functions we reach in the UQ algorithm run under these parameters. At this level, the truncation error is approximately $1.6 \times 10^{-5}$. In each case, we took $250$ Monte Carlo samples to calculate integrals. We note this implies that in the fixed level case $C_G = 1+ 2Q_{91}/M_k \leq 1+182/250 = 1.728$. 

At this level, the truncation error is approximately $7.1 \times 10^{-7}$. The following graph gives the averaged results over $200$ trials. We see that the UQ method has outperformed the fixed level variant due to being able to take advantage of larger stepsizes.

Furthermore, not only has the UQ method outperformed the fixed level algorithm in terms of convergence, it has also greatly outperformed it in terms of performance. Each trial of the UQ method took approximately $2.5$ seconds, and each trial of the fixed level method took approximately $4$ seconds. So not only do we converge faster with UQ, we also do it in less time. If we compare the convergence of both methods where we allot the same amount of computational time to each, we see that the UQ method converges about two orders of magnitude more than when we keep the basis functions fixed.

\begin{figure}
	\centering
	\begin{minipage}{.5\textwidth}
		\centering
		\includegraphics[width=\linewidth]{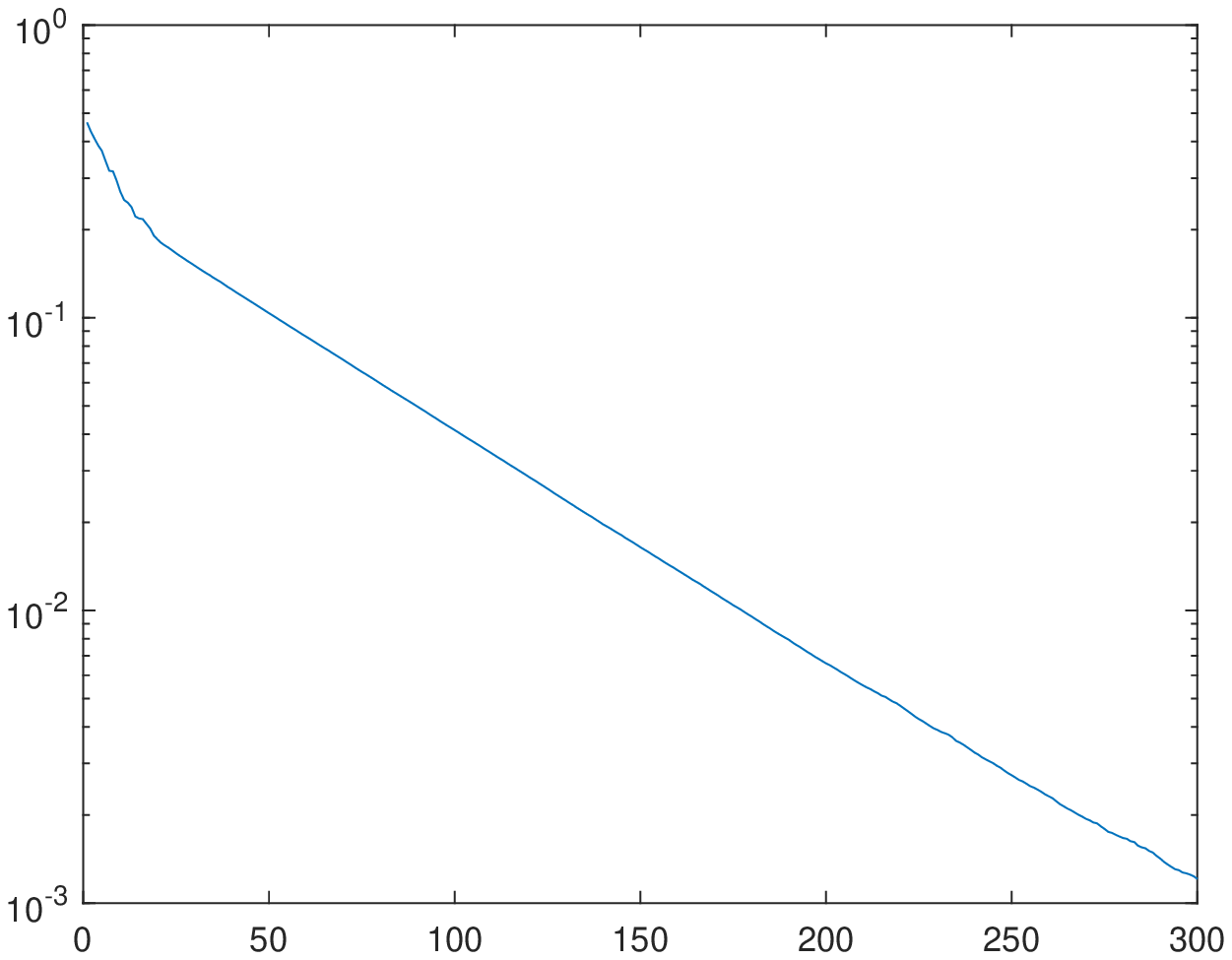}
		\caption{Gradient Descent with uncertainty in $v$.} \label{fig::VarConvergence}
	\end{minipage}%
	\begin{minipage}{.5\textwidth}
		\centering
		\includegraphics[width=\linewidth]{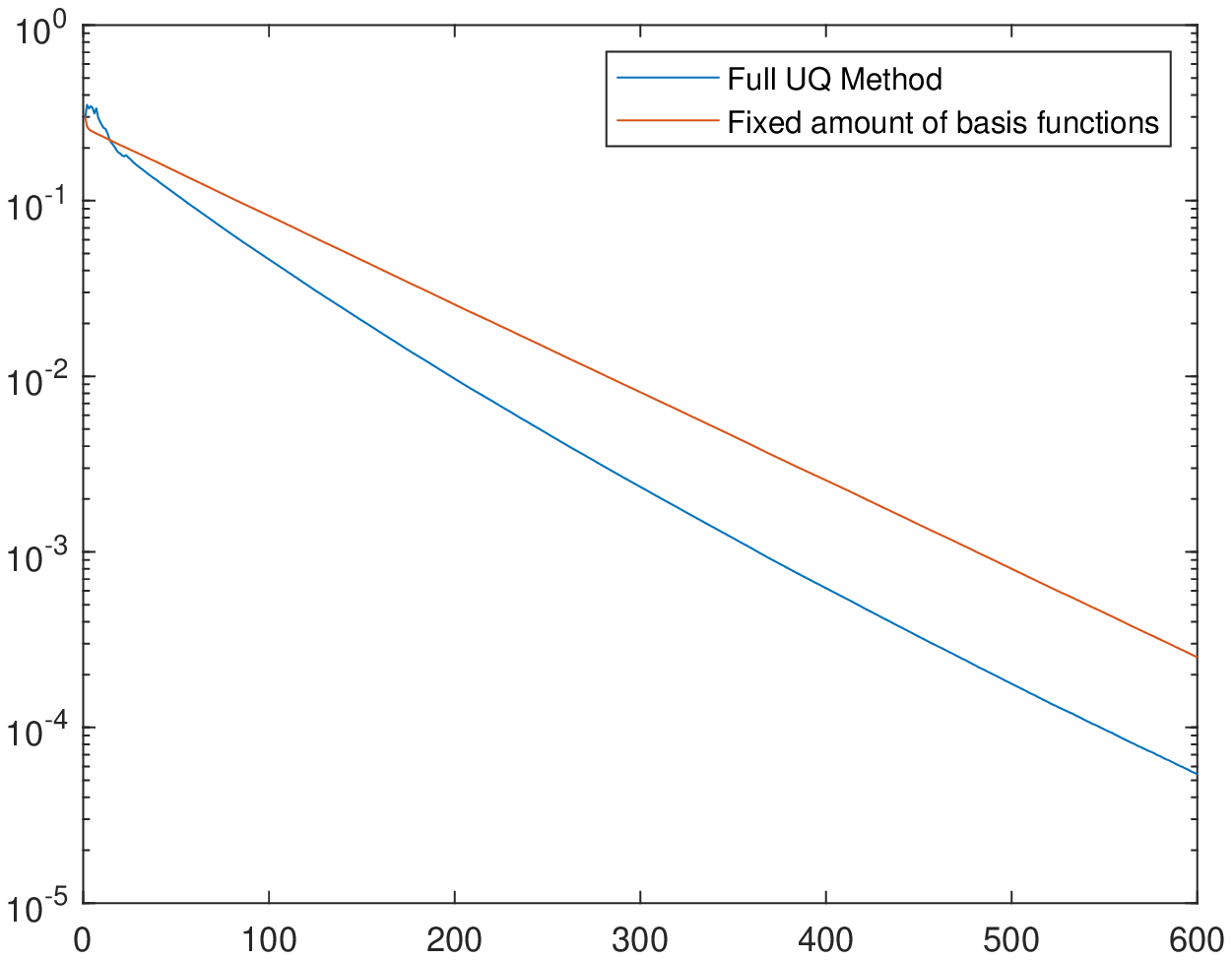}
		\caption{Comparison between fixed and variable basis functions.} 
		\label{fig::UQJustification}
	\end{minipage}
\end{figure}



\section{Conclusion}
In this paper we introduced a gradient descent and accelerated gradient descent equivalent to the stochastic uncertainty quantification method given by \cite{crepey2020uncertainty}. We motivated our method by considering that to estimate the statistics of $x^*(\theta)$ naively, we would need to perform gradient descent an exponential number of times.

We showed that each method converges linearly to a solution, and as the number of basis functions grows, we eventually converge to a $\varepsilon$ neighborhood of the true solution. Numerical evidence supported the above claims for each method.

For our purposes we assumed that $f$ must be strongly convex for each $\theta$. It would be interesting to know how much these properties can be weakened to still obtain useful results on the basis vectors $u$. Similarly, we hope to subsequently analyse the utility of this method on various industrial applications.

\bibliographystyle{siamplain}
\bibliography{UQDraft}

 \newpage
 \appendix
 \section{Strong Convexity Properties}
In this appendix, we give many typical properties of strongly convex functions and show that they still hold under our definition of strong convexity. We recall again that definition:
\begin{definition} \label{appendixdef1}
	Consider a function $f: L^2_\pi \times \mathbb{R}^d \rightarrow \mathbb{R}$. We say that $f$ is strongly convex with parameter $\mu$ if:
	\begin{equation}
	\langle \nabla f(x_1(\theta),\theta)- \nabla f(x_2(\theta),\theta), x_1(\theta) - x_2(\theta) \rangle_\pi \geq \mu||x_1(\theta) - x_2(\theta)||_\pi^2 
	\end{equation}
	for all $x_1(\cdot), x_2(\cdot) \in L^2_\pi$.
\end{definition}
Similarly, we can give an equivalent definition for convexity:
\begin{definition} \label{appendixdef2}
	Consider a function $f: L^2_\pi \times \mathbb{R}^d \rightarrow \mathbb{R}$. We say that $f$ is convex if:
	\begin{equation}
	\langle \nabla f(x_1(\theta),\theta)- \nabla f(x_2(\theta),\theta), x_1(\theta) - x_2(\theta) \rangle_\pi \geq 0
	\end{equation}
	for all $x_1(\cdot), x_2(\cdot) \in L^2_\pi$.
\end{definition}
We also define a quantity that will be useful going forward:
\begin{definition}
	Let $f(x(\theta),\theta)$ be some function evaluated at $x(\theta)$. Then we define the mean of $f$ at $x(\theta)$ as:
	\begin{equation}
	M(f(x(\theta),\theta)) \defeq \langle f(x(\theta),\theta),1 \rangle_\pi = \mathbb{E}_\pi \left(f(x(\theta,\theta)\right) \int_{\Theta} f(x(\theta),\theta) \pi d(\theta).
	\end{equation}
\end{definition}
The main results we want to prove is the following:
\begin{lemma}
	Let $f$ be a convex function as per Definition \ref{appendixdef1}, and let $\nabla f$ be Lipschitz continuous with parameter $L$ Then we have:
	\begin{equation*}
	M\left(f(y(\theta),\theta)\right) \leq M\left(f(x(\theta),\theta)\right) + \langle \nabla f(x(\theta)), y(\theta) - x(\theta) \rangle_\pi + \frac{L}{2}||y(\theta) - x(\theta)||_\pi^2
	\end{equation*}
\end{lemma}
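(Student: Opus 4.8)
The plan is to mirror the classical proof of the descent lemma (the quadratic upper bound for a function with Lipschitz gradient), but to carry it out at the level of the mean functional $M(f(\cdot))$ and the $\pi$-inner product rather than pointwise in $\theta$. The reason a purely pointwise argument will not work is that the Lipschitz constant $L$ is defined through the $\pi$-norm and not pointwise in $\theta$; there is no pointwise Lipschitz constant available, so the Lipschitz bound must be applied only after integrating against $\pi$.

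First I would reduce everything to a one-dimensional problem by introducing the scalar auxiliary function
\[
\phi(t) = M\bigl(f(x(\theta) + t(y(\theta)-x(\theta)),\theta)\bigr), \qquad t \in [0,1],
\]
where $x + t(y-x)$ denotes the element of $L^2_\pi$ given by $\theta \mapsto x(\theta) + t(y(\theta)-x(\theta))$, which is well defined since $L^2_\pi$ is a vector space. Using the defining property of the gradient in this setting, namely that the directional derivative of $M(f(\cdot))$ at a point $w$ in a direction $u$ equals $\langle \nabla f(w), u\rangle_\pi$ (so that $\nabla f$ is the $\pi$-Riesz representative of $DM(f)$), the chain rule gives $\phi'(t) = \langle \nabla f(x+t(y-x)), y-x\rangle_\pi$.

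Next I would apply the fundamental theorem of calculus to $\phi$ on $[0,1]$ and subtract the constant term $\langle \nabla f(x), y-x\rangle_\pi = \int_0^1 \langle \nabla f(x), y-x\rangle_\pi\,dt$ to reach
\[
M(f(y)) - M(f(x)) - \langle \nabla f(x), y-x\rangle_\pi = \int_0^1 \langle \nabla f(x+t(y-x)) - \nabla f(x),\, y-x\rangle_\pi\, dt.
\]
I would then bound the integrand by Cauchy--Schwarz in the $\pi$-inner product, followed by the $\pi$-norm Lipschitz bound applied to the pair $x+t(y-x),\, x \in L^2_\pi$, which yields $\langle \nabla f(x+t(y-x)) - \nabla f(x), y-x\rangle_\pi \le L t \,\|y-x\|_\pi^2$, and integrating $\int_0^1 L t\,dt = L/2$ produces exactly the claimed constant.

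The only nonroutine point, and hence the main obstacle, is justifying the FTC step: that $\phi$ is continuously differentiable with $\phi'(t) = \langle \nabla f(x+t(y-x)), y-x\rangle_\pi$. This amounts to a chain rule for the functional $M(f(\cdot))$ together with continuity of $t \mapsto \nabla f(x+t(y-x))$ in the $\pi$-norm; the latter is immediate from the $\pi$-norm Lipschitz continuity of $\nabla f$, so $\phi'$ is continuous and the FTC applies. I note that the convexity assumption (Definition \ref{appendixdef1}) plays no role in this argument—only the Lipschitz gradient hypothesis is used—so the same proof in fact establishes the bound under the weaker assumption of merely a Lipschitz gradient.
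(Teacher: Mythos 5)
Your proof is correct and follows essentially the same route as the paper's: restrict $f$ to the segment from $x(\theta)$ to $y(\theta)$, apply the fundamental theorem of calculus to the resulting univariate function, and bound the integrand by Cauchy--Schwarz together with the $\pi$-norm Lipschitz estimate, yielding the factor $\int_0^1 Lt\,dt = L/2$. The only cosmetic difference is that you differentiate the mean $M(f(\cdot))$ directly via a chain rule for the functional, whereas the paper differentiates pointwise in $\theta$ and then interchanges the $t$- and $\theta$-integrals by Fubini; your observation that convexity is never actually used (only the Lipschitz gradient) is also correct.
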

\begin{proof}
	Consider the uni-variate function $g(t)$ such that:
	\begin{equation*}
	g(t) = f(x(\theta) + t(y(\theta) - x(\theta)),\theta)
	\end{equation*}
	for some arbitrary $x(\theta), y(\theta)$. We then see:
	\begin{equation} \label{appendix::CSEquation}
	g'(t) - g'(0) = \langle \nabla f(x(\theta) + t(y(\theta) - x(\theta)),\theta) - \nabla f(x(\theta),\theta), y(\theta)-x(\theta) \rangle_2,
	\end{equation}
	where we note the norm in this case is the $2$-norm. Here we are treated $\theta$ as fixed and taking the $2$-norm between the real numbers we get assuming a fixed $\theta$. From this expression, we can integrate to find:
	\begin{equation*}
	M\left(f(y(\theta),\theta)\right) = M\left(g(1)\right)
	\leq M\left(g(0)\right) + M\left(\int_{t=0}^1 g'(t) dt\right)
	\end{equation*}
	Next, note that Fubini's Theorem holds here so we can swap the order of integration to find:
	\begin{align*}
	&M\left(\int_{t=0}^1 g'(t) dt\right) \nonumber \\
	&= \int_{t=0}^1M\left(g'(t)\right) dt \nonumber \\
	&\leq \int_{t=0}^1 \langle \nabla f(x(\theta) + t(y(\theta) - x(\theta)),\theta) - \nabla f(x(\theta),\theta), y(\theta)-x(\theta) \rangle_\pi + M\left(f(x(\theta),\theta)\right) dt \nonumber \\
	&\leq M\left(f(x(\theta),\theta)\right) +  \int_{t=0}^1 \frac{L}{2}||y(\theta) - x(\theta)||_\pi
	\end{align*}
	where we have used the fact that $g'(t) = g'(t) - g'(0) + g'(0)$, along with Equation \eqref{appendix::CSEquation} and Lipschitz Continuity. Now substituting our expressions for $g$, we find:
	\begin{equation*}
	M\left(f(y(\theta),\theta)\right) \leq M\left(f(x(\theta),\theta)\right) + \langle \nabla f(x(\theta)), y(\theta) - x(\theta) \rangle_\pi + \frac{L}{2}||y(\theta) - x(\theta)||_\pi^2
	\end{equation*}
\end{proof}
Due to this modified form of the standard quadratic upper bound, we can now say:
\begin{lemma} \label{appendixConvexLemma}
	Let $f$ be a convex function as per Definition \ref{appendixdef1}, and let $\nabla f$ be Lipschitz continuous with parameter $L$ Then we have:
	\begin{equation*}
	M\left(f(x(\theta),\theta) - f(x^*(\theta),\theta)\right) \geq \frac{1}{2L}||\nabla f(x(\theta))||_\pi^2
	\end{equation*}
\end{lemma}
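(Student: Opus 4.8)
The plan is to follow the classical derivation of the lower bound $f(x) - f(x^*) \geq \frac{1}{2L}||\nabla f(x)||^2$ for $L$-smooth convex functions, transcribed to the mean functional $M(\cdot)$ and the $\pi$-inner product. The natural starting point is the quadratic upper bound just established, namely $M(f(y(\theta),\theta)) \leq M(f(x(\theta),\theta)) + \langle \nabla f(x(\theta)), y(\theta) - x(\theta) \rangle_\pi + \frac{L}{2}||y(\theta) - x(\theta)||_\pi^2$, which holds for all $x(\cdot), y(\cdot) \in L^2_\pi$.

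First I would fix $x(\cdot)$ and choose the specific perturbation $y(\theta) = x(\theta) - \frac{1}{L}\nabla f(x(\theta),\theta)$. This is a legitimate element of $L^2_\pi$ because the square-integrability assumptions guarantee $\nabla f(x) \in L^2_\pi$, so the shift is well-defined and $||y(\theta) - x(\theta)||_\pi = \frac{1}{L}||\nabla f(x(\theta))||_\pi$. Substituting this $y$ into the quadratic upper bound and combining the linear term $\langle \nabla f(x), y-x \rangle_\pi = -\frac{1}{L}||\nabla f(x)||_\pi^2$ with the quadratic term $\frac{L}{2}\cdot\frac{1}{L^2}||\nabla f(x)||_\pi^2$ yields $M(f(y(\theta),\theta)) \leq M(f(x(\theta),\theta)) - \frac{1}{2L}||\nabla f(x(\theta))||_\pi^2$. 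This is the one genuine computation, and it is routine.

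Next I would invoke optimality of $x^*$. Since $x^*$ minimizes $f(\cdot,\theta)$ pointwise for $\pi$-almost every $\theta$, we have $f(x^*(\theta),\theta) \leq f(y(\theta),\theta)$ for $\pi$-almost every $\theta$, and integrating against $\pi$ (monotonicity of the integral) gives $M(f(x^*(\theta),\theta)) \leq M(f(y(\theta),\theta))$ for the particular $y$ chosen above, indeed for any $y \in L^2_\pi$. Chaining this with the previous inequality gives $M(f(x^*(\theta),\theta)) \leq M(f(x(\theta),\theta)) - \frac{1}{2L}||\nabla f(x(\theta))||_\pi^2$, and rearranging produces the claimed bound $M(f(x(\theta),\theta) - f(x^*(\theta),\theta)) \geq \frac{1}{2L}||\nabla f(x(\theta))||_\pi^2$.

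The step I expect to require the most care is the passage from the pointwise optimality defining $x^*$ to the inequality $M(f(x^*)) \leq M(f(y))$ at the level of the mean functional, rather than merely at a single $\theta$. This is immediate once pointwise optimality is granted, so it is conceptually the only subtle point; everything else is a direct transcription of the Euclidean argument with $||\cdot||$ replaced by $||\cdot||_\pi$ and $f$ replaced by its mean $M(f)$.
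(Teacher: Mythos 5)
Your proof is correct and follows essentially the same route as the paper's: both substitute the minimizer $y = x - \tfrac{1}{L}\nabla f(x)$ of the quadratic upper bound (the paper derives it by optimizing over $c$ in $y - x = -c\nabla f(x)$, you write it down directly) and then invoke $M(f(x^*(\theta),\theta)) \leq M(f(y(\theta),\theta))$ to conclude. The only cosmetic difference is that the paper phrases the last step as $\inf_y M(f(y)) = M(f(x^*))$ rather than via pointwise optimality, which amounts to the same thing.
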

\begin{proof}
	We seek to minimise the upper bound we found in the previous Lemma in $y$:
	\begin{equation*}
	M\left(f(y(\theta),\theta)\right) \leq M\left(f(x(\theta),\theta)\right) + \langle \nabla f(x(\theta)), y(\theta) - x(\theta) \rangle_\pi + \frac{L}{2}||y(\theta) - x(\theta)||_\pi^2
	\end{equation*}
	We first note that $y(\theta)-x(\theta)$ being parallel to $-\nabla f(x(\theta)$ will minimise the upper bound, so letting $y(\theta) - x(\theta) = -c\nabla f(x(\theta))$ we get:
	\begin{equation*}
	M\left(f(y(\theta),\theta)\right) \leq M\left(f(x(\theta),\theta)\right) + \frac{Lc^2 - 2c}{2}||\nabla f(x(\theta))||_\pi^2
	\end{equation*}
	from which we find this is minimised when $c= -1/L$, and thus find:
	\begin{equation*}
	\inf_y M\left(f(y(\theta),\theta)\right) = M\left(f(x^*(\theta),\theta)\right) \leq M\left(f(x(\theta),\theta)\right) - \frac{1}{2L}||\nabla f(x(\theta),\theta)||_\pi^2,
	\end{equation*}
	where we reach our conclusion upon rearranging.
\end{proof}
From this Lemma, we can prove our first main result, an adaption of \textit{co-coercivity} of the gradient:
\begin{theorem} \label{appendix::cocoercivity}
	Let $f$ be convex and have Lipschitz gradient with parameter $L$. Then:
	\begin{equation*}
	\langle \nabla f(x(\theta),\theta) - \nabla f(y(\theta),\theta), x(\theta) - y(\theta) \rangle_\pi 
	\geq \frac{1}{L} ||\nabla f(x(\theta),\theta) - \nabla f(y(\theta),\theta)||_\pi^2
	\end{equation*}
\end{theorem}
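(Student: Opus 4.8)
The plan is to reduce the claim to the lower bound already proved in Lemma \ref{appendixConvexLemma} by the classical co-coercivity trick of introducing two auxiliary functions. For fixed $x(\theta), y(\theta) \in L^2_\pi$, I would define, pointwise in $\theta$,
\begin{equation*}
\phi_x(z(\theta),\theta) = f(z(\theta),\theta) - \nabla f(x(\theta),\theta)\,z(\theta), \qquad \phi_y(z(\theta),\theta) = f(z(\theta),\theta) - \nabla f(y(\theta),\theta)\,z(\theta).
\end{equation*}
Taking means gives $M(\phi_x(z)) = M(f(z)) - \langle \nabla f(x), z \rangle_\pi$, and the pointwise gradient in the first argument is $\nabla \phi_x(z) = \nabla f(z) - \nabla f(x)$ (similarly for $\phi_y$). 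The whole point of this construction is that the subtracted term is linear in $z$, so it shifts the gradient by the constant $\nabla f(x)$ without otherwise changing it.

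First I would verify that $\phi_x$ and $\phi_y$ inherit both hypotheses needed to invoke Lemma \ref{appendixConvexLemma}. Since $\nabla \phi_x(z_1) - \nabla \phi_x(z_2) = \nabla f(z_1) - \nabla f(z_2)$, the convexity of Definition \ref{appendixdef2} and the $L$-Lipschitz property of the gradient transfer verbatim from $f$. Moreover $\nabla \phi_x$ vanishes exactly at $z = x$, so by convexity $x$ is a global minimiser of $M(\phi_x(\cdot))$, and likewise $y$ minimises $M(\phi_y(\cdot))$. This is the one step I would be careful about in the functional $\pi$-norm setting: I need that a stationary point is a global minimiser, which is precisely the fact used in the proof of Lemma \ref{appendixConvexLemma} when it identifies $\inf_y M(f(y))$ with the value at the point where the gradient vanishes.

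Next I would apply Lemma \ref{appendixConvexLemma} twice: once to $\phi_x$ with evaluation point $y$ and minimiser $x$, and once to $\phi_y$ with evaluation point $x$ and minimiser $y$. Using $\nabla \phi_x(y) = \nabla f(y) - \nabla f(x)$, the two instances read
\begin{align*}
M\left(f(y(\theta),\theta)\right) - M\left(f(x(\theta),\theta)\right) - \langle \nabla f(x(\theta),\theta), y(\theta) - x(\theta) \rangle_\pi &\geq \frac{1}{2L}||\nabla f(x(\theta),\theta) - \nabla f(y(\theta),\theta)||_\pi^2, \\
M\left(f(x(\theta),\theta)\right) - M\left(f(y(\theta),\theta)\right) - \langle \nabla f(y(\theta),\theta), x(\theta) - y(\theta) \rangle_\pi &\geq \frac{1}{2L}||\nabla f(x(\theta),\theta) - \nabla f(y(\theta),\theta)||_\pi^2.
\end{align*}
Adding these cancels the two mean terms, and combining the inner products via bilinearity leaves $\langle \nabla f(x) - \nabla f(y), x - y \rangle_\pi$ on the left and $\frac{1}{L}||\nabla f(x) - \nabla f(y)||_\pi^2$ on the right, which is the desired inequality. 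The only genuinely routine part is this last bookkeeping; the sole conceptual point, flagged above, is the stationary-point-is-minimiser fact, which introduces no new ideas beyond what the earlier lemma already supplies.
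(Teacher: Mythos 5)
Your proposal is correct and follows essentially the same route as the paper: both define the shifted functions $f_x, f_y$ (your $\phi_x,\phi_y$), observe that $x$ and $y$ are their respective minimisers, apply Lemma \ref{appendixConvexLemma} to each, and add the two inequalities. Your writeup is in fact slightly more careful, since you explicitly verify that $\phi_x$ inherits convexity and the Lipschitz gradient bound and correctly evaluate $\nabla\phi_x$ at $y$ rather than at $x$.
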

\begin{proof}
	Define the following two functions:
	\begin{equation*}
	f_x(z(\theta),\theta) = f(z(\theta),\theta) - \langle \nabla f(x(\theta),\theta), z(\theta) \rangle_2, \qquad f_y(z(\theta),\theta) = f(z(\theta),\theta) - \langle \nabla f(y(\theta),\theta), z(\theta) \rangle_2
	\end{equation*}
	where again, we note that we use the $2$-norm, as we are again treating $\theta$ as fixed and taking the $2$-norm between the real numbers we get assuming a fixed $\theta$.
	
	We note that $z(\theta) = x(\theta)$ is the minimiser of $f_x(z)$, and so we can write:
	\begin{align*}
	&M\left(f(y(\theta),\theta) - f(x(\theta),\theta) - \langle \nabla f(x(\theta),\theta), y(\theta) - x(\theta)\right) \nonumber \\
	&= M\left(f_x(y(\theta),\theta) - f_x(x(\theta),\theta) \right) \nonumber \\
	&\geq \frac{1}{2L} ||\nabla f_x(x(\theta),\theta)||_\pi = \frac{1}{2L} ||\nabla f(y(\theta),\theta) - \nabla f(x(\theta),\theta)||_\pi
	\end{align*}
	using Lemma \ref{appendixConvexLemma}. Similarly, we can show using $f_y$:
	\begin{equation*}
	M\left(f(x(\theta),\theta) - f(y(\theta),\theta) - \langle \nabla f(y(\theta),\theta), x(\theta) - y(\theta)\right) \geq \frac{1}{2L} ||\nabla f(y(\theta),\theta) - \nabla f(x(\theta),\theta)||_\pi
	\end{equation*}
	and adding these two Equations gives the result.
\end{proof}
We can also, like in the usual case, extend co-coercivity to strongly convex functions as follows:
\begin{theorem}
	Suppose $f$ is a strongly convex function with strong convexity parameter $\mu$, and has Lipschitz gradient with parameter $L$. Then we have:
	\begin{align*}
	&(L+\mu)\langle \nabla f(x(\theta),\theta) - \nabla f(y(\theta)), x(\theta) - y(\theta) \rangle_\pi \nonumber \\
	&\geq \mu L || x(\theta) - y(\theta)||_\pi^2 +   || \nabla f(x(\theta),\theta) - \nabla f(y(\theta),\theta)||_\pi^2
	\end{align*}
\end{theorem}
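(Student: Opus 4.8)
The plan is to reduce the claim to the co-coercivity result just established (Theorem \ref{appendix::cocoercivity}), applied not to $f$ itself but to the function obtained by subtracting a multiple of the squared norm. Concretely, I would define, pointwise in $\theta$,
\[
g(x(\theta),\theta) \defeq f(x(\theta),\theta) - \frac{\mu}{2}|x(\theta)|^2,
\]
so that $\nabla g(x(\theta),\theta) = \nabla f(x(\theta),\theta) - \mu\,x(\theta)$. Writing $a = \nabla f(x(\theta),\theta) - \nabla f(y(\theta),\theta)$ and $b = x(\theta)-y(\theta)$ for brevity, the target is exactly $(L+\mu)\langle a,b\rangle_\pi \geq \mu L\,\|b\|_\pi^2 + \|a\|_\pi^2$, and the whole argument is engineered so that co-coercivity of $g$ expands into this after clearing denominators.

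First I would check that $g$ is convex in the sense of Definition \ref{appendixdef2}: since $\langle \nabla g(x)-\nabla g(y), x-y\rangle_\pi = \langle a,b\rangle_\pi - \mu\|b\|_\pi^2$, convexity of $g$ is precisely the strong convexity of $f$ from Definition \ref{appendixdef1}, so this step is immediate.

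The hard part, and the step I would spend the most care on, is establishing that $\nabla g$ is Lipschitz with the reduced constant $L-\mu$, i.e. $\|\nabla g(x)-\nabla g(y)\|_\pi \leq (L-\mu)\|x-y\|_\pi$. This does not follow formally from the two first-order inequalities (strong monotonicity together with $L$-Lipschitzness of $\nabla f$) in isolation: one can exhibit vectors $a=\mu b + c$ with $c\perp b$ that satisfy both yet violate the conclusion, so a genuinely structural argument is needed. My intended route is to exploit the pointwise-in-$\theta$ nature of the problem, namely that for $\pi$-almost every fixed $\theta$ the map $x\mapsto f(x,\theta)$ is, in the ordinary finite-dimensional sense, $\mu$-strongly convex and $L$-smooth, whence $x\mapsto g(x,\theta)$ is convex with $(L-\mu)$-Lipschitz gradient (via the Hessian bound $0\preceq \nabla^2 f - \mu I \preceq (L-\mu)I$); squaring, integrating against $\pi$, and using $\|\cdot\|_\pi^2 = \int |\cdot|^2\,\pi(d\theta)$ then lifts this to the $(L-\mu)$-Lipschitz bound in the $\pi$-norm. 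If one prefers to stay purely at the level of the abstract definitions, this smoothness of $g$ should be recorded as an additional hypothesis, since it encodes a compatibility between the $\mu$ and $L$ constants that the single-pair inequalities alone do not capture.

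Finally, granting the two facts above, I would apply Theorem \ref{appendix::cocoercivity} to $g$ (assuming $L>\mu$; the degenerate case $L=\mu$ forces $a=\mu b$ and gives equality directly), obtaining
\[
\langle a - \mu b,\, b\rangle_\pi \geq \tfrac{1}{L-\mu}\,\|a - \mu b\|_\pi^2 .
\]
Expanding with $\|a-\mu b\|_\pi^2 = \|a\|_\pi^2 - 2\mu\langle a,b\rangle_\pi + \mu^2\|b\|_\pi^2$, multiplying through by the positive factor $L-\mu$, and collecting the $\langle a,b\rangle_\pi$ and $\|b\|_\pi^2$ terms is a short and routine rearrangement that yields $(L+\mu)\langle a,b\rangle_\pi \geq \|a\|_\pi^2 + \mu L\|b\|_\pi^2$, which is the claim. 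The only place any real thought is required is the Lipschitz reduction for $g$; everything after it is mechanical.
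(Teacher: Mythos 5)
Your proposal takes the same route as the paper: perturb to $g = f - \tfrac{\mu}{2}\|x\|_2^2$, check that $g$ is convex (equivalent to $\mu$-strong convexity of $f$), and apply Theorem \ref{appendix::cocoercivity} to $g$ with constant $L-\mu$, then expand and rearrange. (Incidentally, the paper writes the perturbation as $\mu\|x\|_2^2$ without the factor $\tfrac12$; your normalization is the one that actually gives $\nabla g = \nabla f - \mu x$.) The step you rightly isolate as the only nontrivial one --- that $\nabla g$ is $(L-\mu)$-Lipschitz --- is simply asserted as ``clear'' in the paper, so your scrutiny is warranted; however, your proposed Hessian argument imports twice-differentiability and pointwise-in-$\theta$ constants that the paper's integrated ($\pi$-norm) definitions do not supply. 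There is a cleaner discharge that stays entirely within the paper's framework and avoids the extra hypothesis you mention: the proofs of Lemma \ref{appendixConvexLemma} and Theorem \ref{appendix::cocoercivity} never use the norm-Lipschitz property of the gradient directly, only the quadratic upper bound $M(f(y)) \leq M(f(x)) + \langle\nabla f(x), y-x\rangle_\pi + \tfrac{L}{2}\|y-x\|_\pi^2$; and the analogous bound for $g$ with constant $L-\mu$ follows algebraically by expanding $\tfrac{\mu}{2}\|y\|^2$ about $x$ and subtracting the strong-convexity lower bound from the upper bound for $f$. With that substitution your argument is complete, and the final expansion of $\|a-\mu b\|_\pi^2$ and multiplication by $L-\mu$ (with the degenerate case $L=\mu$ handled separately, as you do) is the standard, correct conclusion.
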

\begin{proof}
	This result follows in a standard way: we note the function:
	\begin{equation*}
	h(x(\theta),\theta) = f(x(\theta),\theta) - \mu ||x(\theta)||_2^2
	\end{equation*}
	where again, we use the $2$-norm, and treat $\theta$ as fixed. It is clear this function is convex and $(L - \mu)$ smooth, and so applying Theorem \ref{appendix::cocoercivity} to $h$ gives the result.
\end{proof}

\section{Accelerated Gradient Descent Lemmas}
The goal of this section is to prove Lemma \ref{lem::AppendixLemmaAcc}, which we state again here:
\begin{lemma}
	Under all our usual hypotheses, and that there exists $\rho \in (0,1)$ and $\tilde{P} \in \mathbb{S}_+^2$, possibly depending on $\rho$ such that:
	\begin{equation*}
	\rho^2\tilde{X_1} + (1-\rho^2)\tilde{X_2} \succeq \begin{pmatrix}
	A^TPA - \rho^2 P & A^TPB \\
	B^TPA & B^TPB 
	\end{pmatrix}
	\end{equation*}
	where:
	\begin{equation*}
	\tilde{X_1} = 
	\frac12 \begin{pmatrix}
	\beta^2 \mu & -\beta^2 \mu & -\beta \\
	-\beta^2 \mu & \beta^2 \mu & \beta \\
	-\beta & \beta & \alpha(2 - L\alpha)
	\end{pmatrix}
	\end{equation*}
	and:
	\begin{equation*}
	\tilde{X_2} = 
	\frac12 \begin{pmatrix}
	(1+\beta)^2 \mu & -\beta(1+\beta) \mu & -(1+\beta) \\
	-\beta(1+\beta) \mu & \beta^2 \mu & \beta \\
	-(1+\beta) & \beta & \alpha(2 - L\alpha)
	\end{pmatrix}.
	\end{equation*}
	Then let $P = \tilde{P} \otimes I_d$. We have for all $k \geq 0$:
	\begin{equation*}
	\mathbb{E}(V_P(\xi_{k+1})| \mathcal{F}_{k+1}) \leq \rho^2\mathbb{E}(V_P(\xi_{k})| \mathcal{F}_{k} ) + \alpha^2\left(C + C_G\mathbb{E}\left(||y^k - x^*||_\pi^2| \mathcal{F}_{k} \right) \right)\left(\frac{L}{2} + \tilde{P}_{11}\right)
	\end{equation*}
\end{lemma}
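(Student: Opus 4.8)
The plan is to adapt the Lyapunov/IQC argument of \cite{aybat2019universally} (its Theorem $K.1$ with $\sigma=0$) to the $L^2_\pi$ setting, exploiting that every scalar strong-convexity and smoothness inequality invoked there has its $\pi$-norm analogue in Appendix A. First I would write the one-step recursion in dynamical-system form and split the noisy direction as $g_k = D f(y^k) + e_k$, where \eqref{equation::errVarianceAccel} gives $\mathbb{E}(e_k \mid \mathcal{F}_k) = 0$ and $\mathbb{E}(\|e_k\|_2^2 \mid \mathcal{F}_k) \leq C + C_G\|y^k - x^*\|_\pi^2$. Since $Df(x^*)=0$ forces $\xi^*$ to be a fixed point of the noise-free map, I would write $\xi_{k+1}-\xi^* = A(\xi_k-\xi^*) + B\,Df(y^k) + Be_k$, with $Be_k = [-\alpha e_k;\,0]$ in the block structure induced by $P = \tilde P \otimes I_d$.

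Second, I would expand the quadratic part $(\xi_{k+1}-\xi^*)^T P (\xi_{k+1}-\xi^*)$ and take $\mathbb{E}(\cdot\mid\mathcal{F}_k)$. The zero-mean property of $e_k$ annihilates every cross term linear in $e_k$, leaving a deterministic quadratic form in the augmented vector $w = [(u_k-u^*);\,(u_{k-1}-u^*);\,Df(y^k)]$ together with a single variance term $\mathbb{E}(e_k^T B^T P B e_k \mid \mathcal{F}_k)$. Because $\tilde B = [-\alpha,\,0]^T$, one has $\tilde B^T \tilde P \tilde B = \alpha^2 \tilde P_{11}$, so this variance term equals $\alpha^2 \tilde P_{11}\,\mathbb{E}(\|e_k\|_2^2\mid\mathcal{F}_k)$; this is the source of the $\tilde P_{11}$ factor.

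Third, I would handle the function-value part $h(\xi_{k+1}) = M(f(x^{k+1})-f(x^*))$. Writing $x^{k+1}$ as its noise-free value displaced by $-\alpha e_k$ and applying the quadratic upper bound of Appendix A in the $\pi$-norm, the first-order term vanishes in conditional expectation and the second-order term contributes $\tfrac{L}{2}\alpha^2\,\mathbb{E}(\|e_k\|_2^2\mid\mathcal{F}_k)$, the source of the $\tfrac{L}{2}$ factor. Summing the two channels and inserting the noise model yields exactly the residual $\alpha^2\big(C + C_G\,\mathbb{E}(\|y^k-x^*\|_\pi^2\mid\mathcal{F}_k)\big)\big(\tfrac{L}{2}+\tilde P_{11}\big)$.

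Finally, the deterministic remainder must be bounded by $\rho^2 V_P(\xi_k)$. I would encode strong convexity and $L$-smoothness as the interpolation inequalities carried by $\tilde X_1$ and $\tilde X_2$ — these hold verbatim in $L^2_\pi$ because co-coercivity (Theorem \ref{appendix::cocoercivity}) and the quadratic bounds of Appendix A do — so that $w^T(\rho^2\tilde X_1 + (1-\rho^2)\tilde X_2)w$ upper-bounds the combined function-value-plus-quadratic decrease. The hypothesised LMI then dominates $\begin{pmatrix} A^TPA-\rho^2P & A^TPB \\ B^TPA & B^TPB\end{pmatrix}$ evaluated on $w$, which is precisely the noise-free part of $(\xi_{k+1}-\xi^*)^TP(\xi_{k+1}-\xi^*)-\rho^2(\xi_k-\xi^*)^TP(\xi_k-\xi^*)$, closing the bound; the Kronecker structure $P=\tilde P\otimes I_d$ lets the $d$-dimensional inequality inherit the scalar $3\times 3$ LMI unchanged. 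I expect the main obstacle to be the careful bookkeeping of the two noise channels together with the interpolation inequalities encoded in $\tilde X_1,\tilde X_2$, i.e.\ confirming that the weights $\rho^2$ and $1-\rho^2$ are attached to the correct pair of points so that the function-value contributions telescope into $h(\xi_{k+1})-\rho^2 h(\xi_k)$.
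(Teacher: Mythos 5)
Your proposal follows essentially the same route as the paper's proof: split $V_P = W_P + h$, bound the quadratic part to extract the $\alpha^2\tilde P_{11}$ variance contribution, bound the function-value part via the $\pi$-norm quadratic upper bound to extract the $\tfrac{L}{2}\alpha^2$ contribution, and use the hypothesised LMI together with the interpolation inequalities encoded in $\tilde X_1,\tilde X_2$ to absorb the deterministic remainder into $\rho^2 V_P(\xi_k)$ with the function values telescoping. The paper packages the first two steps as two auxiliary lemmas adapted from the cited references, but the decomposition, the origin of the $\bigl(\tfrac{L}{2}+\tilde P_{11}\bigr)$ factor, and the final combination are the same as what you describe.
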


Throughout this section, we will find it useful to revisit our defining inequality for strong convexity:
\begin{equation}
\langle \nabla f(x_1(\theta),\theta)- \nabla f(x_2(\theta),\theta), x_1(\theta) - x_2(\theta) \rangle_\pi \geq \mu||x_1(\theta) - x_2(\theta)||_\pi^2 
\end{equation}
And we note that as $x^*$ is a minimum, we can add $M\left( f(x_1(\theta),\theta) - \pi f(x^*(\theta), \theta\right)$ to the left hand side of this inequality, where we use $x_2 = x^*$. Rearranging, we find:
\begin{align}
M\left(f(x_1(\theta),\theta)\right)& \\
&\geq M\left(f(x^*(\theta), \theta)\right) + \langle \nabla f(x_1(\theta),\theta)- \nabla f(x_2(\theta),\theta), x_1(\theta) - x_2(\theta) \rangle_\pi \nonumber \\
&+ \mu||x_1(\theta) - x_2(\theta)||_\pi^2. \nonumber
\end{align}
Due to the similarity with the usual inequality for strong convexity (apart from the expectation taken), we will refer to this as using the strong convexity inequality \textit{in expectation}.

Before we begin the proof, we must prove a couple of preliminary Lemmas, adapted from \cite{aybat2019universally}. Firstly:

\begin{lemma}
	Consider the function $W_P(\xi) = (\xi - \xi^*)^T P(\xi-\xi^*)$. Let $P = \tilde{P} \otimes I_d$, where $\tilde{P} \in \mathbb{S}_+^2$. Then we have:
	\begin{align*}
	\mathbb{E}(W_P(\xi_{k+1}) | \mathcal{F}_{k+1}) \leq &\mathbb{E} \left[  \begin{pmatrix}
	\xi_k - \xi^* \\
	\tilde{\nabla g(y^k)}
	\end{pmatrix} 
	\begin{pmatrix}
	A^TPA & A^TPB \\
	B^TPA & B^TPB 
	\end{pmatrix}
	\begin{pmatrix}
	\xi_k - \xi^* \\
	\tilde{\nabla g(y^k)}
	\end{pmatrix} | \mathcal{F}_{k} \right] \nonumber \\  
	&+ \left(C_G\mathbb{E}\left(||y^k - x^*||_\pi | \mathcal{F}_k^2\right) + C\right) \alpha^2 \tilde{P}_{11}
	\end{align*}
\end{lemma}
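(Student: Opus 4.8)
The plan is to expand the Lyapunov quadratic $W_P(\xi_{k+1})$ directly along the state-space recursion and then peel off the stochastic part. First I would record the fixed-point identity underlying the error dynamics: since $\nabla f(x^*)=0$ the exact input at the optimum vanishes, and since $\tilde{A}\,(u^*,u^*)^T=(u^*,u^*)^T$ the stacked optimum satisfies $\xi^*=A\xi^*$. Hence, writing $\delta_k=\xi_k-\xi^*$ and letting $\tilde{g}_k=\nabla g(y^k)+w_k$ be the noisy gradient returned at $y^k$ (with $w_k$ the estimation error), the update $\xi_{k+1}=A\xi_k+B\tilde{g}_k$ gives the clean error recursion $\delta_{k+1}=A\delta_k+B\tilde{g}_k$. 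Substituting this into $W_P(\xi_{k+1})=(\xi_{k+1}-\xi^*)^TP(\xi_{k+1}-\xi^*)$ produces, exactly,
\[
W_P(\xi_{k+1}) = \begin{pmatrix} \delta_k \\ \tilde{g}_k \end{pmatrix}^T \begin{pmatrix} A^TPA & A^TPB \\ B^TPA & B^TPB \end{pmatrix} \begin{pmatrix} \delta_k \\ \tilde{g}_k \end{pmatrix}.
\]

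The second step is to split $\tilde{g}_k=\nabla g(y^k)+w_k$ inside this form, separating the deterministic gradient $\nabla g(y^k)$ (which is $\mathcal{F}_k$-measurable) from the fresh noise $w_k$. This expands $W_P(\xi_{k+1})$ into three pieces: the quadratic form in $(\delta_k,\nabla g(y^k))$, a cross term linear in $w_k$, and the pure noise term $w_k^T B^TPB\, w_k$. Taking the conditional expectation (read as conditioning on the information available before the step-$k$ noise is drawn), the cross term vanishes by the unbiasedness $\mathbb{E}(w_k\mid\mathcal{F}_k)=0$ of Equation \eqref{equation::errVarianceAccel}, leaving precisely the quadratic form with the true gradient $\widetilde{\nabla g(y^k)}$ that appears on the right-hand side of the claim, plus the expected noise term. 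So the entire content of the inequality is the bound on $\mathbb{E}(w_k^T B^TPB\, w_k\mid\mathcal{F}_k)$.

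The final step, and the only one requiring care, is evaluating that noise term. Using $B=\tilde{B}\otimes I_d$ and $P=\tilde{P}\otimes I_d$ together with $(X\otimes I)(Y\otimes I)=XY\otimes I$, one collapses $B^TPB=(\tilde{B}^T\tilde{P}\tilde{B})\otimes I_d$, and since $\tilde{B}=(-\alpha,0)^T$ this equals $\alpha^2\tilde{P}_{11}\,I_d$; the noise thus couples into the Lyapunov function only through the top-left entry $\tilde{P}_{11}$. Consequently $\mathbb{E}(w_k^T B^TPB\,w_k\mid\mathcal{F}_k)=\alpha^2\tilde{P}_{11}\,\mathbb{E}(\|w_k\|_2^2\mid\mathcal{F}_k)$, and applying the second-moment bound of \eqref{equation::errVarianceAccel} gives $\le\alpha^2\tilde{P}_{11}(C+C_G\|y^k-x^*\|_\pi^2)$, which is the stated additive term. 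I expect the main obstacles to be bookkeeping rather than conceptual: carefully matching the $\ell^2$ norm appearing in the quadratic form with the $\pi$-norm used in the noise model (justified by the isometry $\|x\|_\pi=\|u\|_2$), and being precise about the filtration at which unbiasedness and the variance bound are invoked, since the statement as written mixes $\mathcal{F}_k$ and $\mathcal{F}_{k+1}$.
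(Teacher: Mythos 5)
Your proposal is correct and follows essentially the same route as the paper, whose own proof is just the remark that the argument is ``virtually identical'' to the cited reference \cite{aybat2019universally} with the modified noise model substituted in: expand $W_P$ along the error recursion $\delta_{k+1}=A\delta_k+B\tilde{g}_k$ (using $\xi^*=A\xi^*$), kill the cross terms by unbiasedness of the gradient noise, and bound the pure noise term via $B^TPB=\alpha^2\tilde{P}_{11}I_d$ together with the second-moment bound of Equation \eqref{equation::errVarianceAccel}. You have simply supplied explicitly the details the paper delegates to the reference, including the correct reading of the garbled filtration/norm notation in the statement.
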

\begin{proof}
	Virtually identical to \cite{aybat2019universally}, other than we use our noise model from Equation \eqref{equation::errVarianceAccel}.
\end{proof}
Secondly, by following Lemma $4.5$ in \cite{aybat2020robust}, we can get:
\begin{lemma}
	Consider a strongly convex $f$ for each $\theta$, and consider the dynamical system representation of AG. Then, for any $\rho \in (0,1)$:
	\begin{align*}
	&\mathbb{E}\left[  \begin{pmatrix}
	\xi_k - \xi^* \\
	\tilde{\nabla g(y^k)}
	\end{pmatrix} 
	\left(\rho^2 X_1 + (1-\rho^2)X_2\right)
	\begin{pmatrix}
	\xi_k - \xi^* \\
	\tilde{\nabla g(y^k)}
	\end{pmatrix} | \mathcal{F}_{k+1} \right]
	\nonumber \\
	&\leq \rho^2\mathbb{E}h(x^k(\theta) | \mathcal{F}_{k} ) - \mathbb{E}h(x^{k+1}(\theta) | \mathcal{F}_{k+1}) +\frac{L\alpha^2}{2}\left(C_G\mathbb{E}\left(||y^k - x^*||_\pi^2 | \mathcal{F}_{k}\right) + C\right)
	\end{align*}
	where as previously defined, $h(x(\theta)) = M\left(f(x(\theta),\theta)) - f(x^*(\theta),\theta)\right)$.
\end{lemma}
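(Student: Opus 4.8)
The plan is to follow Lemma~4.5 of \cite{aybat2020robust}, recognising that the two quadratic forms $\tilde X_1$ and $\tilde X_2$ are simply two strong-convexity inequalities written in the state coordinates and closed off by a single smoothness (descent) step. First I would record the two algebraic identities that express the relevant displacements in terms of the three blocks $a = u_k - u^*$, $b = u_{k-1} - u^*$, and $c = \nabla f(y_k)$ of the augmented vector $[\,\xi_k - \xi^*;\ \tilde\nabla g(y^k)\,]$: namely $y_k - u_k = \beta(a-b)$ and $y_k - x^* = (1+\beta)a - \beta b$, both immediate from $y_k = (1+\beta)u_k - \beta u_{k-1}$. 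Expanding the quadratic forms against these identities shows that the form of $\tilde X_1$ equals $\tfrac{\mu}{2}\|y_k-u_k\|_\pi^2 - \langle \nabla f(y_k), y_k - u_k\rangle_\pi + \tfrac{\alpha(2-L\alpha)}{2}\|\nabla f(y_k)\|_\pi^2$, while the form of $\tilde X_2$ is the same expression with $u_k$ replaced throughout by $x^*$. The $\mu$-block collapses to a perfect square precisely because $((1+\beta)a-\beta b)$ and $\beta(a-b)$ are the two displacements above, and the bottom-right entry $\alpha(2-L\alpha)$ is common to both.

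Next I would bound the first two terms of each form using strong convexity in expectation (as set up immediately before this section, i.e. averaging the pointwise inequality against $\pi$). Comparing $y_k$ with $u_k$ gives $\tfrac{\mu}{2}\|y_k-u_k\|_\pi^2 - \langle \nabla f(y_k), y_k-u_k\rangle_\pi \le M(f(u_k)) - M(f(y_k))$, and comparing $y_k$ with $x^*$ gives $\tfrac{\mu}{2}\|y_k-x^*\|_\pi^2 - \langle \nabla f(y_k), y_k-x^*\rangle_\pi \le M(f(x^*)) - M(f(y_k))$. The common gradient-squared term is then disposed of by the quadratic upper bound established at the start of Appendix~A, applied to the noisy update $u_{k+1} = y_k - \alpha D'f(y_k)$. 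Substituting $u_{k+1}-y_k = -\alpha(\nabla f(y_k)+e_k)$, taking the conditional expectation, and using $\mathbb{E}(e_k\mid\mathcal F_k)=0$ to annihilate the $\langle \nabla f(y_k), e_k\rangle$ cross term, I obtain $\tfrac{\alpha(2-L\alpha)}{2}\|\nabla f(y_k)\|_\pi^2 \le M(f(y_k)) - \mathbb{E}(M(f(u_{k+1}))\mid\mathcal F_k) + \tfrac{L\alpha^2}{2}\,\mathbb{E}(\|e_k\|_\pi^2\mid\mathcal F_k)$, and the noise model \eqref{equation::errVarianceAccel} bounds the last expectation, yielding the term $\tfrac{L\alpha^2}{2}\bigl(C + C_G\mathbb{E}(\|y^k-x^*\|_\pi^2\mid\mathcal F_k)\bigr)$.

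Finally I would assemble the pieces. Adding the strong-convexity bound for $\tilde X_1$ to the descent bound cancels the $M(f(y_k))$ terms and leaves $M(f(u_k)) - \mathbb{E}M(f(u_{k+1}))$, i.e. $h(x^k) - \mathbb{E}h(x^{k+1})$; doing the same for $\tilde X_2$ leaves $M(f(x^*)) - \mathbb{E}M(f(u_{k+1})) = -\mathbb{E}h(x^{k+1})$. Since the gradient-squared term carries total weight $\rho^2 + (1-\rho^2) = 1$ in the convex combination $\rho^2\tilde X_1 + (1-\rho^2)\tilde X_2$, the descent step (and hence the noise term) is applied only once, and the $h(x^{k+1})$ contributions add to a full $-\mathbb{E}h(x^{k+1})$ while $h(x^k)$ retains weight $\rho^2$; this produces exactly $\rho^2\mathbb{E}h(x^k) - \mathbb{E}h(x^{k+1}) + \tfrac{L\alpha^2}{2}(C_G\mathbb{E}\|y^k-x^*\|_\pi^2 + C)$. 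I expect the principal difficulty to be bookkeeping rather than conceptual: matching every entry of $\tilde X_1,\tilde X_2$ to the correct quadratic and bilinear term after the substitutions, and, more delicately, justifying that the pointwise smoothness and strong-convexity estimates may be integrated against $\pi$ and then combined with the conditional-expectation noise bound without disturbing the orthogonality that removes the gradient--noise cross terms.
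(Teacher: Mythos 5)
Your proposal is correct and follows essentially the same route as the paper, whose proof simply defers to Lemma~4.5 of \cite{aybat2020robust} with the strong-convexity inequality replaced by its $\pi$-averaged (``in expectation'') version; your expansion of $\tilde X_1,\tilde X_2$ into the two strong-convexity comparisons at $(y_k,u_k)$ and $(y_k,x^*)$, plus the single descent step absorbing the noise via unbiasedness, is exactly the argument being cited. The only cosmetic discrepancy is the filtration index ($\mathcal F_k$ vs.\ $\mathcal F_{k+1}$) in the conditioning, which is already loose in the paper's own statement.
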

\begin{proof}
	Identical to \cite{aybat2020robust}, except in place of Equations $4.19-4.20$, we use the strong convexity inequality in expectation with respect to $\pi$.
\end{proof}
With these two Lemmas, we can now give our proof:
\begin{proof}[Proof of Lemma \ref{lem::AppendixLemmaAcc}]
	By the definition of $V_P$ we can expand in the following way:
	\begin{align*}
	V_P(\xi_{k+1}) - \rho^2 V_P(\xi_k) &= \nonumber \\
	&= W_P(\xi_{k+1}) - \rho^2W_P(\xi_k) + h(\xi_{k+1}) - \rho^2h(\xi_{k}) \nonumber
	\end{align*}
	then on taking expectations, we can use the previous two Lemmas, along with the hypothesis on the matrix $\rho^2\tilde{X_1} + (1-\rho^2)\tilde{X_2}$ to see:
	\begin{align*}
	&\mathbb{E}\left(V_P(\xi_{k+1})| \mathcal{F}_{k+1} \right) - \rho^2 \mathbb{E}\left(V_P(\xi_k)| \mathcal{F}_{k} \right) \nonumber \\
	&= \mathbb{E}\left(W_P(\xi_{k+1}| \mathcal{F}_{k+1} )\right) - \rho^2\mathbb{E}\left(W_P(\xi_k)| \mathcal{F}_{k} \right) + \mathbb{E}h(\xi_{k+1}| \mathcal{F}_{k+1} ) - \rho^2\mathbb{E}h(\xi_{k}| \mathcal{F}_{k} ) \nonumber
	\end{align*}
	which leads to:
	\begin{align*}
	\mathbb{E}\left(W_P(\xi_{k+1}) - W_P(\xi_k)| \mathcal{F}_{k+1} \right) +& \mathbb{E}\left(h(\xi_{k+1}) - \rho^2h(\xi_{k})| \mathcal{F}_{k+1} \right) \leq \nonumber \\
	&\leq \mathbb{E}\left(\rho^2 h(\xi^k) - h(\xi^{k+1}) + h(\xi^{k+1}) - \rho^2h(\xi^k)| \mathcal{F}_{k+1} \right) \nonumber \\
	&+\left(\frac{L\alpha^2}{2} + \alpha^2 \tilde{P}_{11}\right)\left(C_G\mathbb{E}\left(||y^k - x^*||_\pi^2 | \mathcal{F}_{k}\right)  + C)\right)  \nonumber \\
	&=\left(\frac{L\alpha^2}{2} + \alpha^2 \tilde{P}_{11}\right)\left(C_G\mathbb{E}\left(||y^k - x^*||_\pi^2 | \mathcal{F}_{k} \right)  + C)\right) \nonumber \\
	&=\alpha^2\left(C_G\mathbb{E}\left(||y^k - x^*||_\pi^2 | \mathcal{F}_{k}\right)  + C)\right)\left(\frac{L}{2} + \tilde{P}_{11}\right)
	\end{align*}
	from which the statement follows.
\end{proof}
\end{document}